\newcommand{\supp}[1]{\mathrm{supp}\left( #1 \right)}
\DeclareMathOperator{\RR}{\mathbb{R}}
\newcommand{\sumx}{\sum_{i,j=0}^{N_x-1}}
\newcommand{\sums}{\sum_{p=0}^{N_s-1}}
\newcommand{\sumphi}{\sum_{q=0}^{N_\phi-1}}
\newcommand{\I}{\mathrm{I}}
\newcommand{\II}{\mathrm{II}}
\newcommand{\mres}{\mathbin{\vrule height 1.6ex depth 0pt width
0.13ex\vrule height 0.13ex depth 0pt width 1.3ex}}
\DeclareMathOperator{\ZZ}{\mathbb{Z}}
\DeclareMathOperator{\NN}{\mathbb{N}}
\DeclareMathOperator{\Radon}{\mathcal{R}}
\DeclareMathOperator{\RayRadon}{\mathcal{R}^\mathrm{rd}_{\delta}}
\DeclareMathOperator{\GenRadon}{\mathcal{R}^\omega_{\delta}}
\DeclareMathOperator{\PixelRadon}{\mathcal{R}^\mathrm{pd}_{\delta}}
\DeclareMathOperator{\RayRadonN}{\mathcal{R}^\mathrm{rd}_{\delta^n}}
\DeclareMathOperator{\PixelRadonN}{\mathcal{R}^\mathrm{pd}_{\delta^n}}
\DeclareMathOperator{\RayWeight}{\omega^\mathrm{rd}_{\delta_\mathit{x}}}
\DeclareMathOperator{\PixelWeight}{\omega^\mathrm{pd}_{\delta_\mathit{s}}}
\DeclareMathOperator{\imgdom}{\Omega}
\DeclareMathOperator{\sinodom}{\mathcal{S}}
\newcommand{\dd}[1]{\,\mathrm{d}{#1}}
\DeclareMathOperator{\RayFq}{F^\mathrm{rd}_q}
\DeclareMathOperator{\PixelFq}{F^\mathrm{pd}_q}
\newtheorem{theorem}{Theorem}
\numberwithin{theorem}{section} 
\newtheorem{definition}[theorem]{Definition}
\newtheorem{lemma}[theorem]{Lemma}
\newtheorem{remark}[theorem]{Remark}
\newtheorem{corollary}[theorem]{Corollary}
\theoremstyle{remark}
\begin{document}
\title[Convergence of ray- and pixel-driven discretizations]{Convergence of ray- and pixel-driven discretization frameworks in the strong operator topology}

\author[R. Huber]{Richard Huber}
\email{richard.huber@uni-graz.at}
\address{ IDea Lab - The Interdisciplinary Digital Lab at the University of Graz\\
University of Graz, Graz, Austria
}
%

%
\begin{abstract}
Tomography is a central tool in medical applications, allowing doctors to investigate patients' internal features. The Radon transform (in two dimensions) is commonly used to model the measurement process in parallel-beam CT. Suitable discretization of the Radon transform and its adjoint (called the backprojection) is crucial.
The most commonly used discretization approach combines what we refer to as the ray-driven Radon transform with what we refer to as the pixel-driven backprojection, as anecdotal reports describe these as showing the best approximation performance. However, there is little rigorous understanding of induced approximation errors.
These methods involve three discretization parameters: the spatial-, detector-, and angular resolutions. Most commonly, balanced resolutions are used, i.e., the same (or similar) spatial- and detector resolutions are employed.
We present an interpretation of ray- and pixel-driven discretizations as `convolutional methods', a special class of finite-rank operators.
This allows for a structured analysis that can explain observed behavior.
We prove convergence in the strong operator topology of the ray-driven Radon transform and the pixel-driven backprojection under balanced resolutions, thus theoretically justifying this approach. In particular, with high enough resolutions, one can approximate the Radon transform arbitrarily well. Numerical experiments corroborate these theoretical findings.

\thanks{\\ \textbf{Funding:} This work was supported by The Villum Foundation (Grant No.25893) and is in part based on work supported by the International Research Training Group ``Optimization and Numerical Analysis for Partial Differential Equations with Nonsmooth Structures'', funded by the German Research Council (DFG) and Austrian Science Fund (FWF) grant W1244.
}

\end{abstract}
\subjclass{  44A12, 
  65R10, 
  94A08, 
  41A25.  
}
\keywords{Radon Transform, Computed Tomography, Discretization Errors, Numerical Analysis, X-ray Transform.}
\maketitle
\section{Introduction}

Computed Tomography (CT) is a crucial tool in medicine, allowing the investigation of the interior of patients' bodies \cite{Ammari08_book_methematics_of_medical_imaging,Hsieh_CT_principles}.  
A sequence of X-ray images of the patient from different directions is acquired, from which one reconstructs the three-dimensional distribution of the patient's mass density.
Each measurement point corresponds to the measured loss of intensity (due to attenuation) of an X-ray beam while traversing the body along a straight line. This process can be modeled  via a line integral operator representing the accumulation of attenuation along straight lines.

 In planar parallel beam CT, the measurement process is commonly modeled by the (two-dimensional) Radon transform $\Radon$ \cite{Deans_Radon_applications_1993,helgason1999radon,Natterer:2001:MCT:500773} (in this context, we also refer to it as the forward operator) that maps a function $f$ describing the mass density distribution in the body onto a function $\Radon f$ describing measurements (line integrals) related to all measured straight lines (parametrized by an angular variable $\phi$ and a detector variable $s$). 
 Although modeling different physical processes, the Radon transform (and related operators) also finds application in astrophysics \cite{cameron2004astrotomography}, materials science \cite{leary_analytical_2016}, and seismography \cite{RAWLINSON2010101}.
 
The (parallel-beam) tomographic reconstruction corresponds to the solution of the ill-posed inverse problem $\Radon f =g$ for known measurements $g$ and unknown density distributions $f$.
The filtered backprojection (FBP) \cite{Natterer:2001:MCT:500773}  is an analytical inversion formula that can be used for fast reconstruction. However, as FBP coincides with the application of the (discontinuous) inverse of the Radon transform, it is highly unstable, possessing a high-frequency noise amplification property. (This can, to some degree, be counteracted by low-pass filters \cite{beckmann2015error}.) Thus, more evolved iterative reconstruction techniques were developed, which have an intrinsic regularization effect via early stopping and allow for the easy integration of prior information. These include iterative algebraic reconstruction algorithms (e.g., SIRT and conjugated gradients) \cite{Gilbert_Sirt,SART_ALgo,SCALES_CG_tomography_1987} and variational approaches (e.g., total variation regularized reconstructions) \cite{Scherzer:2008:VMI:1502016,Dong2013,C8NR09058K} that require iterative solution algorithms for convex optimization problems. These iterative methods also involve the adjoint operator $\Radon^*$ (called the backprojection \cite{Natterer:2001:MCT:500773}).

While $\Radon$ and $\Radon^*$ are infinite-dimensional operators, only finite amounts of data can be measured and processed in practical applications. Thus, proper discretization $\Radon_\delta$ (for some discretization parameters $\delta$) is imperative. It is common to think of both measurements and reconstructions as images with pixels of finite resolutions and correspondingly, $\delta=(\delta_x,\delta_\phi,\delta_s)$ denotes the spatial resolution of reconstructions $\delta_x$, and the angular- and detector resolutions $(\delta_\phi,\delta_s)$ of measured data.
 The expectation is that with ever finer resolution $(\delta\to 0)$, also the approximation gets arbitrarily accurate (i.e., $\Radon_\delta \overset{\delta \to 0}{\to} \Radon$ in some sense). Simultaneously, the approximations of the backprojection should also improve with higher degrees of discretization. This is crucial, as it justifies the use of theory concerning the (continuous) Radon transform to discrete settings.
 
Note that the parameters $(\delta_x,\delta_\phi,\delta_s)$ can also be interpreted as sampling parameters and  significant investigations were made into what information can be recovered from which amount and type of sampling; see \cite[Section III]{Natterer:2001:MCT:500773} and \cite{Faridani2004}. In particular, these suggest certain resolution conditions related to the Shannon Sampling theorem (e.g., $\delta_\phi \geq \frac {\delta_s}{2}$) to offer reconstruction guarantees. However, these guarantees require a number of assumptions (e.g., no noise, essentially bandlimited functions) that in practice cannot always be satisfied. Hence, in this work, we will keep the setting general, and not restrict ourselves to certain sampling conditions a priori.

A number of different discretization schemes have been proposed based on different heuristics, showing different strengths and weaknesses.
The most widely used discretization approach employs the ray-driven Radon transform $\RayRadon$  \cite{Siddon1985FastCO,Path_through_pixels,doi:10.1118/1.4761867} and the pixel-driven backprojection $\PixelRadon^*$ \cite{4331812,322963,Qiao2017ThreeNA,doi:10.1137/20M1326635} (we speak of an $\mathrm{rd}$-$\mathrm{pd}^*$ approach). Concerning the choice of discretization parameters $\delta$, it is most common to use similar resolutions for the detector and the reconstruction (we speak of balanced resolutions), i.e., $\delta_x\approx \delta_s$. Note that usually $\delta_s$ is a physical quantity (the width of the physical detector's pixels) and can thus not be influenced. The angular resolution $\delta_\phi$ is chosen during the measurement process. Finally, the spatial resolution $\delta_x$ is fully under our control when doing the reconstructions. The ray-driven approach discretizes line integrals by summation of integrals on line intersections with pixels, while the pixel-driven backprojection is based on linear interpolation on the detector.
There is also a ray-driven backprojection and a pixel-driven Radon transform as the adjoints to the mentioned operators; however, these are said to perform poorly (supposedly creating artifacts \cite{Man_2004,Xie_CUDA_paralelization_2015,Liu_GPU_DDP_2017}) and are thus hardly ever used in practice.
Other discretization schemes include distance-driven methods \cite{1239600,Man_2004} and so-called fast schemes \cite{averbuch2001fast,KINGSTON20062040}.

We can associate the discretizations $\Radon_\delta$ and $\Radon_\delta^*$ with matrices $A \approx \Radon$ and $B\approx \Radon^*$. One would naturally think that $A^T=B$ (one speaks of a matched pair of operators) as they approximate adjoint operators, but this is not the case if the forward and backward discretization from two different frameworks is used (e.g., $\mathrm{rd}$-$\mathrm{pd}^*$ rather than $\mathrm{rd}$-$\mathrm{rd}^*$ or $\mathrm{pd}$-$\mathrm{pd}^*$). Using an unmatched pair can potentially harm iterative solvers' convergence
 \cite{elfving2018unmatched,dong2019fixing,10.1007/s00245-022-09933-5} as convergence guarantees of many iterative solvers (or iterative optimization algorithms, more generally) are based on adjointness. Thus, such methods might converge more slowly or not fully converge when using non-adjoint (unmatched) operator pairs.

However, this danger seems to be outweighed in practice by the supposed better approximation performance of the ray-driven forward $\RayRadon$ and the pixel-driven backprojection $\PixelRadon^*$.
Using mismatched operators is certainly preferable to discretizations that do not adequately represent the measurement process.
There is little rigorous analysis of the discretizations' approximation errors, and anecdotal knowledge of performance is more prevalent; see e.g., \cite{322963,Qiao2017ThreeNA,Liu_GPU_DDP_2017}.
In \cite{doi:10.1137/20M1326635}, the author rigorously discussed approximation errors for pixel-driven methods in the case that the spatial resolution $\delta_x$ is asymptotically smaller than the detector resolution $\delta_s$, finding convergence in the operator norm, thus justifying the $\mathrm{pd}$-$\mathrm{pd}^*$ approach when $\frac{\delta_x}{\delta_s}\to 0$. 
However, in practice, it is much more common to use balanced resolutions ($\delta_x\approx \delta_s$), in which case these results are not applicable.

This paper will justify the use of $\mathrm{rd}$-$\mathrm{pd}^*$ approaches for balanced resolutions by proving convergence in the strong operator topology (i.e., pointwise convergence). This substantiates heuristic notions of approximation performance. In particular, given any function, the resolutions can be chosen fine enough to approximate the Radon transform (or backprojection) arbitrarily well. Some of these results were  already presented in the author's doctoral thesis \cite{huber2022pixel}. Moreover, we show that convergence of the ray-driven backprojection is obtained if $\delta_s \ll \delta_x$. The main theoretical result Theorem \ref{Thm_approximation_ray_driven} was already announced in \cite{10.1007/978-3-031-92366-1_11} without a rigorous proof, which this paper now provides.

This paper is structured as follows: Section \ref{section_Radon} describes the Radon transform and related notation (Section \ref{section_Radon_notation}) and the investigated discretization frameworks (Section \ref{section_Radon_discretization}). We perform a convergence analysis to investigate the approximation properties of these discretizations in Section \ref{section_Radon_proofs}. Section \ref{section_results} formulates the main theoretical result in Theorem \ref{Thm_approximation_ray_driven}, and the corresponding proofs are presented in Section \ref{section_proofs}.
Finally, Section \ref{section_numerics} presents numerical experiments corroborating these theoretical results.

\section{The Discrete Radon Transform}
\label{section_Radon}
Below, we set the notation, give relevant definitions, as well as introduce the considered discretization frameworks.
\subsection{Preliminaries and notation}
\label{section_Radon_notation}

Throughout this paper, we denote the spatial domain by ${\imgdom:=B(0,1)}\subset \RR^2$ (the two-dimensional open unit ball); one can think of it as the area in which the investigated body is located. All investigations in this paper will be planar, i.e., we ignore the natural third space dimension. The set of all conceivable measurements are contained in the sinogram domain we define next.

\begin{definition}[Sinogram domain]
We define the (parallel-beam) sinogram domain $\sinodom:=[0,\pi[\times ]-1,1[$. Moreover, given $(\phi,s)\in \sinodom$, the associated straight line is $L_{\phi,s}:=\{s \vartheta_\phi+t\vartheta_\phi^\perp \in \RR^2\big | \ t\in \RR\}$, where $\vartheta_\phi:=(\cos(\phi),\sin(\phi))\in \RR^2$ is the unit vector  associated with the projection angle $\phi$ and $\vartheta_\phi^\perp:=(-\sin(\phi),\cos(\phi))\in \RR^2$ denotes the direction rotated by 90 degrees counterclockwise; see Figure \ref{Fig_Radon_forward_and_backprojection_illustration}.
\end{definition}

\begin{remark} \label{remark_angular_domain}
Note that other choices for the angular domain concerning $\phi$ are possible, e.g., $[-\frac{\pi}{2},\frac{\pi}{2}[$, or $[0,2\pi[$, and are also used throughout the literature. Since $L_{\phi+\pi,s}=L_{\phi,-s}$, the Radon transform possesses a symmetry property making formulations for these angular domains equivalent and the results of this paper are easily extendable to such domains.
\end{remark}

Given the domains $\imgdom$ and $\sinodom$, we will consider related $L^2$ function spaces.
\begin{definition}[$L^2$ spaces]
As is common, $L^2(\imgdom)$ denotes the set of all equivalence classes $[f]$ of (two-dimensional) Lebesgue-measurable functions $f\colon \Omega \to \RR$ such that 
\begin{itemize}
\item $f=g$ almost everywhere if and only if $g\in[f]$ (i.e., they only differ on sets of (two-dimensional) Lebesgue measure zero),
\item $\|f\|_{L^2(\imgdom)} < \infty$, where\begin{equation} \|f\|_{L^2(\imgdom)}^2 := \int_{\imgdom} |f(x)|^2\dd x.
\end{equation}
\end{itemize}
Note that $L^2(\imgdom)$ is a Hilbert space with the norm $\|\cdot\|_{L^2(\imgdom)}$. The Hilbert space $L^2(\sinodom)$ is defined completely analogously with 
\begin{equation}
\|g\|_{L^2(\sinodom)}^2 := \int_{\sinodom} |g(\phi,s)|^2\dd{(\phi,s)}.
\end{equation}

\end{definition}

The (theoretical) measurement process can be understood as granting one measurement value for each $(\phi,s)\in\sinodom$ related to line integrals along $L_{\phi,s}$, resulting in the Radon transform.
\begin{definition}[Radon transform]
The Radon transform $\Radon \colon L^2(\imgdom)\to L^2(\sinodom)$ is defined according to
\begin{align}
\label{equ_def_radon_transform}
[\Radon f](\phi,s):= \int_{\RR^2} f(x) \dd {\mathcal{H}^{1}\mres L_{\phi,s}}(x) = \int_{\RR} f(s\vartheta_\phi+t \vartheta_\phi^{\perp}) \dd t 
\end{align}
for $f\in L^2(\imgdom)$ and almost all $(\phi,s)\in \sinodom$ (where $\mathcal{H}^1\mres L_{\phi,s}$ denotes the one-dimensional Hausdorff measure restricted to $L_{\phi,s}$), i.e., a collection of line integrals.

We define the (parallel-beam) backprojection $\Radon^*\colon L^2(\sinodom)\to L^2(\imgdom)$, which,  given $g\in L^2(\sinodom)$, reads
\begin{equation} \label{equ_def_backprojection}
[\Radon^* g](x) := \int_{0}^{\pi} g(\phi,x\cdot \vartheta_\phi) \dd \phi  \qquad \text{for a.e. }x\in \imgdom.
\end{equation}
\end{definition}

It is well-known that $\Radon$ and $\Radon^*$ are continuous operators between these $L^2$ spaces. 
With the standard $L^2$ inner products $\langle f,\tilde f \rangle_{L^2(\imgdom)}= \int_{\imgdom} f(x)\tilde f(x)\dd x$ and $\langle g,\tilde g \rangle_{L^2(\sinodom)}= \int_{\sinodom} g(\phi,s)\tilde g(\phi,s)\dd (\phi,s)$, the operators $\Radon$ and $\Radon^*$ are adjoint, i.e., $\langle \Radon f, g \rangle_{L^2(\sinodom)}=\langle  f ,\Radon^*g \rangle_{L^2(\imgdom)}$ for all $f\in L^2(\imgdom)$, $g\in L^2(\sinodom)$.

\begin{figure}

\centering
\usetikzlibrary{decorations.pathreplacing}
\usetikzlibrary{positioning,patterns}
\newcommand{\myfiguresize}{0.7}

\newcommand{\uppercutoff}{3.2}
\newcommand{\lowercutoff}{-2}

\begin{tikzpicture}[scale=\myfiguresize*1.4]
    \draw[gray!20,fill] (0,0) circle(2);
    \draw[fill] (0,0) circle (0.1);

	\draw (0,1.8)   node[]{\Large $\imgdom$};

    \pgfmathsetmacro{\myangle}{70}
    \pgfmathsetmacro{\beamend}{2.4}
    \pgfmathsetmacro{\detectorside}{2}
        \pgfmathsetmacro{\detectordepth}{0.4}
	\newcommand{\mycolor}{violet}
    \pgfmathsetmacro{\s}{1}
    \draw [ultra thick, red,rotate around ={\myangle+90:(0,0)}] (-\beamend,\s)--(\beamend,\s) node [midway, below] {\large $L_{\phi,s}$};

    \draw [ultra thick, \mycolor,rotate around ={\myangle+90:(0,0)}] (0,0)--(0,\s) node [midway, right] {\large $s$};
    
        \draw [ultra thick, fill,gray!50,rotate around ={\myangle+90:(0,0)}] (\beamend,-2)rectangle(\beamend+\detectordepth,2) node[midway,xshift=-0.2cm,yshift=0.1cm,above,rotate=\myangle,black]{Detector};
        \draw [very thick,\mycolor,dashed, rotate around ={\myangle+90:(0,0)},->] (\beamend+\detectordepth/2,0) -- (\beamend+\detectordepth/2,-1.5) node [above,xshift=0.1cm]{$s$};
        \draw [fill,rotate around ={\myangle+90:(0,0)},\mycolor] (\beamend+\detectordepth/2,0) circle (0.05cm);

    \clip(-3.1,\lowercutoff) rectangle (2,\uppercutoff);

    \draw [ultra thick,rotate around ={\myangle:(0,0)},orange,->] (0,0) -- (2,0) node[midway,right,yshift=.1cm] {\large $\vartheta_\phi$};
    \draw [ultra thick,rotate around ={\myangle+90:(0,0)}, teal,->] (0,0) -- (2,0) node[midway,above] {\large $\vartheta_\phi^\perp$};

    \pgfmathsetmacro{\radius}{1}
    \draw [ultra thick,blue] (\radius,0) arc (0:\myangle:\radius) node [left,below,yshift=-0.2cm] {\large $\phi$};
    \draw [dashed ] (0,0) -- (\radius,0) ;
\end{tikzpicture} \hfill   \begin{tikzpicture}[scale=0.8]

 \begin{scope}

  \draw [fill=gray!20] (0,2) rectangle (6.28,-2);


  \end{scope}

\draw[very thick,->] (0, 0) -- (6.5, 0) node[right] {$\phi$};
  \draw[very thick,->] (0, -1) -- (0, 2.5) node[above] {$s$};
   \pgfmathtruncatemacro\myindex{14}
  \pgfmathsetmacro {\mydomain}{6.28}
  \draw[scale=1, domain=0:\mydomain, smooth, variable=\x, blue, ultra thick] plot ({\x}, {2*0.9*cos(deg(\x/2-2))}) node[midway,right,xshift=3cm,yshift=1cm] {\Large $(\phi,x\cdot \vartheta_\phi)$};
 
 \pgfmathtruncatemacro\Nangle{10}

  \node[] at (3.1,2.3) {$\sinodom$};
\end{tikzpicture}

\caption{On the left, an illustration of the used geometry with a straight line $L_{\phi,s}$  in direction $\vartheta_\phi^\perp$ with normal distance to the center $s$ (which also corresponds to the detector offset). On the right, an illustration of the backprojection, where for fixed $x$, we integrate values along a sine-shaped trajectory in the sinogram domain. This trajectory corresponds to all lines $L_{\phi,s}$ passing through $x$.}	
\label{Fig_Radon_forward_and_backprojection_illustration}

\end{figure}
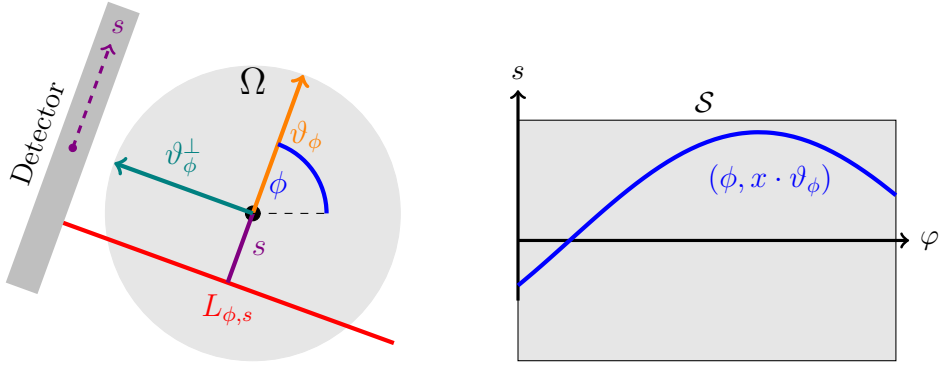

\subsection{Discretization frameworks}
\label{section_Radon_discretization}

Next, we describe the ray-driven and pixel-driven discretization frameworks as finite rank operators via convolutional discretizations. Finite rank describes operators whose range is finite-dimensional, and therefore these operators -- in essence -- also only depend on finitely many dimensions, mapping all orthogonal directions to zero \cite[II.4 Definition 4.3]{conway2019course}. 
We start by discretizing the spatial domain $\imgdom$ and the sinogram domain $\sinodom$ into `pixels'; one can think of data and reconstructions as digital images; see Figure \ref{Fig_Geometry}.

We fix $N_x\in \NN$, set $\delta_x:=\frac{2}{N_x}$, and use the notation $[N_x]:=\{0,\dots,N_x-1\}$. We define the spatial pixel centers 
\begin{multline}
x_{ij}:= \left ( \frac{2i+1}{N_x}-1,\frac{2j+1}{N_x}-1 \right)
\\
=\left(\left(i+\frac{1}{2}\right)\delta_x-1,\left (j+\frac{1}{2}\right)\delta_x-1\right) \qquad \text{for }i,j\in [N_x] 
\end{multline}
 and 
\begin{align}
 X_{ij}:=x_{ij}+\left[-\frac {\delta_x}{2},\frac{\delta_x}{2}\right]^2   \qquad { \text{for }i,j\in [N_x] }
\end{align} 
denotes the corresponding squared (spatial) pixel with side-length (resolution) $\delta_x$.

We consider a finite number of (projection) angles $\phi_0< \dots <\phi_{N_\phi-1} \in \left[0,\pi\right[$ and associate them with the angular pixels 
\begin{align}
\Phi_0 :=\left[0,\frac{\phi_{0}+\phi_1}{2}\right[ \qquad \text{and} \qquad \Phi_{N_\phi-1}:= \left[\frac{\phi_{N_\phi-2}+\phi_{N_\phi-1}}{2},\pi\right[, \notag
\\
\Phi_q := \left[\frac{\phi_{q-1}+\phi_q}{2},\frac{\phi_{q+1}+\phi_q}{2}\right[ \qquad \text{ for }q\in \{1,\dots,N_\phi-2\}.
\end{align}
Correspondingly, we set $\delta_\phi=\max_{q\in [N_\phi]} |\Phi_q|$. 
For the sake of readability, we write $\vartheta_q$ for the unit vector $\vartheta_{\phi_q}$ associated with the angle $\phi_q$.

Similarly, we assume a fixed number $N_s\in \NN$ of detector pixels and set $\delta_s=\frac{2}{N_s}$. 
The associated equispaced detector pixels are 
\begin{equation}
S_p:= s_p+\left[-\frac{\delta_s}{2},\frac{\delta_s}{2}\right[ \qquad \text{ for }p\in [N_s] 
\end{equation}
with centers 
\begin{equation}
s_p := \frac{2p+1}{N_s}-1 = \left(p+\frac{1}{2}\right) \delta_s-1 \qquad \text{ for }p\in [N_s] . 
\end{equation}

Hence, we have discretized the domain $\Omega$ (actually the larger domain $[-1,1]^2$) into a Cartesian $N_x\times N_x$ grid with (spatial) resolution $\delta_x$, while the sinogram space is discretized as an $N_\phi\times N_s$ grid of rectangular pixels $\Phi_q\times S_p$ for $q\in [N_\phi]$ and $p\in [N_s]$, i.e., with angular resolution $\delta_\phi$ and detector resolution $\delta_s$; see Figure \ref{Fig_Geometry}. We notationally combine all these resolutions to $\delta=(\delta_x,\delta_\phi,\delta_s)\in \RR^+\times\RR^+\times\RR^+$, and $N_x$, $N_\phi$ and $N_s$ are tacitly chosen accordingly.

\begin{figure}
\centering
\usetikzlibrary{decorations.pathreplacing}
\usetikzlibrary{positioning,patterns}
\newcommand{\myfiguresize}{0.77}

\newcommand{\uppercutoff}{3.2}
\newcommand{\lowercutoff}{-3}
\begin{tikzpicture}[scale=\myfiguresize]
  \draw[thick,fill, gray!20] (0,0) circle (2 );

\clip(-3.2,\lowercutoff) rectangle (2.8,\uppercutoff);
\pgfmathsetmacro{\N}{6}
\foreach \n in {0,...,\N}
{
\pgfmathsetmacro{\s}{2*(\n/\N*2-1)}
\draw[] (-2,\s) -- (2,\s);
\draw[] (\s,-2) -- (\s,2);
}

\pgfmathsetmacro{\mydelta}{(1/\N*2)}
\foreach \n in {1,...,\N}
{
\pgfmathsetmacro{\s}{2*((\n-1/2)/\N*2-1)}
\draw[teal] (\s,2-\mydelta) -- (0,2.5) node [above] {$N_x$};
\draw[fill,teal] (\s,2-\mydelta) circle (0.05cm);

\draw[magenta] (-2+\mydelta,\s) -- (-2.4,0) node [left,xshift=0.cm] {$N_x$};
\draw[fill,magenta] (-2+\mydelta,\s) circle (0.05cm);
}

\draw[,decorate,decoration={brace,amplitude=3pt,mirror},violet] (2,2-2*\mydelta) -- (2,2) node[midway,right] {$\delta_x$};
\draw[,decorate,decoration={brace,amplitude=3pt,mirror},violet] (2,2) -- (2-2*\mydelta,2) node[midway,above] {$\delta_x$};

\pgfmathsetmacro{\n}{2}
\pgfmathsetmacro{\m}{1}
\pgfmathsetmacro{\s}{2*((\n-1/2)/\N*2-1)}
\pgfmathsetmacro{\ss}{2*((\m-1/2)/\N*2-1)}

\draw[fill=blue] (\s-2*1/\N,\ss-2*1/\N) rectangle (\s+2*1/\N,\ss+2*1/\N);
\draw [dashed, blue] (\s,\ss) -- (\s+0.5,\ss-0.5)  node[below]{\footnotesize $X_{ij}$};

\draw [dashed, red] (\s,\ss) -- (\s-0.5,\ss-0.5)  node[below]{\footnotesize $x_{ij}$};
\draw [fill,red] (\s,\ss) circle (0.05);

\node[orange] at (0,0) {\Large $\imgdom$};

\end{tikzpicture} \qquad 
\begin{tikzpicture}[scale=0.7]
\pgfmathsetmacro{\startpointx}{-3}
\pgfmathsetmacro{\startpointy}{-3}

\draw [fill,gray!10] (-2,-2) rectangle (2,2);

\draw[dashed,->] (\startpointx,\startpointy) -- (\startpointx,2.5) node[midway,rotate=270,yshift=0.2cm]{\footnotesize Angle};
\draw[dashed,->] (\startpointx,-3) -- (2.6,-3)  node[midway,yshift=-0.2cm]{\footnotesize Detector};
\draw[thick] (\startpointx-0.2,2) --(\startpointx+0.2,2) node [xshift=0.3cm]{$\footnotesize \pi$};
\draw[thick] (\startpointx-0.2,-2) --(\startpointx+0.2,-2) node [xshift=0.3cm]{$\footnotesize0$};
\draw[thick] (-2,-3+0.2) -- (-2,-3-0.2) node[yshift=-0.1cm]{$\footnotesize -1$};
\draw[thick] (2,-3+0.2) -- (2,-3-0.2) node[yshift=-0.1cm,xshift=0.1cm]{ $\footnotesize 1$};

\pgfmathsetmacro{\N}{6}
\foreach \n in {0,...,\N}
{
\pgfmathsetmacro{\s}{2*(2*\n/\N-1)}
\draw (-2,\s) -- (2,\s);
\draw (\s,-2) -- (\s,2);

}
\foreach \n in {1,...,\N}
{
\pgfmathsetmacro{\s}{2*(2*\n/\N-1)}
\draw[teal](\s-2/\N, -2+2/\N)-- (0,-2.2) node[below]{$N_s$};
\draw[fill,teal](\s-2/\N, -2+2/\N) circle (0.05);

\draw[magenta](2-2/\N,\s-2/\N)-- (2.2,0) node[right]{$N_\phi$};
\draw[fill,magenta](2-2/\N,\s-2/\N) circle (0.05);

}

\pgfmathsetmacro{\s}{6/\N}
\pgfmathsetmacro{\ss}{2-2/\N}

\draw[fill=blue](\s-2/\N,\ss-2/\N) rectangle (\s+2/\N,\ss+2/\N);
\draw[dashed,blue] (\s,\ss) -- (\s-1,\ss+0.5) node[above]{$\Phi_q \times S_p$};

\draw[dashed,red] (\s,\ss) -- (\s+1,\ss+0.5) node[above]{$(\phi_q,s_p)$};
\draw [fill,red] (\s,\ss)  circle (0.05);

\draw [ultra thick,blue] (\s-2/\N,-3) -- (\s+2/\N,-3) node[midway,below]{$S_p$};
\draw [ultra thick,blue] (\startpointx,\ss-2/\N) -- (\startpointx,\ss+2/\N) node[midway,left]{$\Phi_q$};

\draw [decorate,decoration={brace,amplitude=5pt,mirror},violet] (2,-2) -- (2,-2+4/\N) node[midway,right,xshift=.1cm] {$|\Phi_q|\leq \delta_\phi$};

\draw [decorate,decoration={brace,amplitude=5pt,mirror},violet] (2-4/\N,-2) -- (2,-2) node[midway,below, yshift=-.1cm] {$\delta_s$};

\node[orange] at (0,0) {\Large $\sinodom$};

\end{tikzpicture}

    \caption{On the left, the spatial domain  $\imgdom$ (in fact, the larger domain $[-1,1]^2$) is divided into pixels $X_{ij}$ with width $\delta_x\times \delta_x$. On the right, the discretization of the sinogram domain $\sinodom$ into pixels $\Phi_q\times S_p$ with pixel centers $(\phi_q,s_p)$ and  with width $|\Phi_q|\times \delta_s$ is shown.}
    \label{Fig_Geometry}
\end{figure}
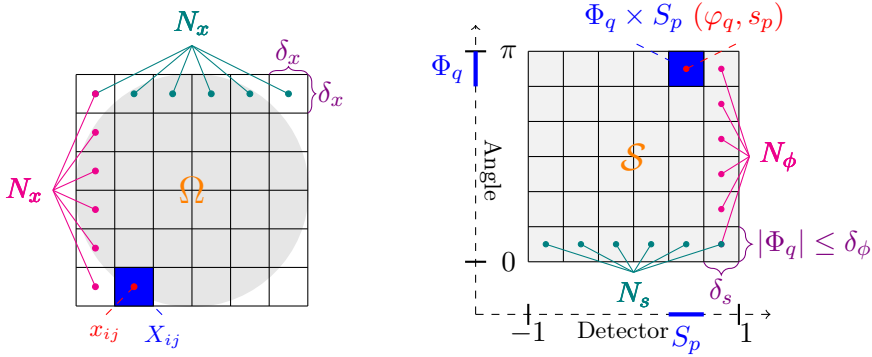

One can naturally associate  pixel values of an image representing $f\in L^2(\imgdom)$ with average values $f_{ij}:=\frac{1}{\delta_x^2}\int_{X_{ij}}f(x) \dd x$ and corresponding piecewise constant functions $f_\delta = \sumx f_{ij} u_{ij}$ in $U_\delta:=\text{span}\{u_{ij}\}_{i,j\in[N_x]}\widehat = \RR^{N_x^2}$ with $u_{ij}:=\chi_{X_{ij}}-\frac 1 2 \chi_{\partial X_{ij}}$ where $\chi_M(x)$ equals 1 if $x\in M$  and zero otherwise, and  $\partial X_{ij}$ denotes the boundary of $X_{ij}$. (In other words, $u_{ij}$ attains the value 1 inside $X_{ij}$, $\frac{1}{2}$ on its boundary and zero otherwise.)
Similarly, we can consider sinogram images as functions $g_\delta \in V_\delta := \text{span}\{v_{qp}\}_{q\in [N_\phi],p\in [N_s]}\widehat = \RR^{N_\phi\cdot N_s}$ with $v_{qp}:=\chi_{\Phi_q\times S_p}$ and the associated coefficients $g_{qp}$ are again average values on pixels.

Discretizations of $\Radon$ translate to a matrix-vector multiplication with the matrix $A\in \RR^{ (N_\phi \cdot N_s) \times N_x^2 }$ mapping from $U_\delta$ to $V_\delta$ (we think of the collection of pixel values $(f_{ij})_{ij}$ and $(g_{qp})_{qp}$ as vectors $\overline f$ and $\overline g$ with entries $\overline f [ij]=f_{ij}$). In practical implementations, these matrices are rarely saved (due to memory constraints). Rather, matrix-free formulations are employed, i.e., the relevant matrix entries are calculated when needed and discarded afterward. 
The matrix entries $A[qp,ij]$ (the combination of $q$ and $p$ determines a row, while $i$ and $j$ determine a column) represent the weight attributed to a pixel $X_{ij}$ in the calculation for $L_{\phi_q,s_p}$. In order to compute $[\Radon f](\phi,s)$ for a specific pair $(\phi,s)$, one has to take relatively few values (the values along $L_{\phi,s}$) into account; therefore, also the $A[qp,ij]$ should be non-zero only for pixels that are close to $L_{\phi_q,s_p}$. This way, the matrix $A$ is relatively sparse, which is of practical importance.

In order to structurally analyze the discretizations, it is necessary to understand what individual matrix entries are. To that end, one realizes that for the ray-driven and pixel-driven methods, the matrix entries are evaluations of relatively simple and interpretable weight functions (in closed form) at geometrically meaningful positions.
\begin{definition}[Weight functions]
Given $\delta$ and $\phi\in [0,\pi[$, we set\\ $\overline s(\phi) := \frac{\delta_x}{2} (| \cos(\phi)|+| \sin(\phi)|)$, $\underline s(\phi) := \frac{\delta_x}{2} (\big | |\cos(\phi)|- |\sin(\phi)|\big|)$ and   $\kappa(\phi) :=  \min \left\{\frac{1}{|\cos(\phi)|},\frac{1}{|\sin(\phi)|}\right\}$. We define the ray-driven weight function for $t\in \RR$ according to
\begin{equation}\label{equ_def_ray_weight}
\RayWeight(\phi,t):= \frac{1}{\delta_x}
\begin{cases} 
\kappa(\phi) \qquad &\text{if } |t| <\underline s(\phi),
\\
 \frac{\overline s(\phi)-|t|}{\delta_x|\cos(\phi)\sin(\phi)|} \qquad & \text{if } |t| \in [\underline s(\phi),\overline s(\phi)[,
\\
\frac{1}{2} \qquad & \text{if } \phi\in \frac{\pi}{2}\ZZ \text{ and } |t| =\overline s (\phi),
\\
0 & \text{else},	
\end{cases}
\end{equation}
where $\frac{\pi}{2}\ZZ$ denotes all multiples of $\frac{\pi}{2}$. 
Moreover, we define the pixel-driven weight function to be
\begin{equation}\label{equ_def_pixel_weight}
\PixelWeight(\phi,t)=\PixelWeight(t):= \frac{1}{\delta_s^2} \max\{\delta_s-|t|,0\} \qquad \text{for } t\in \RR, \  \phi\in \left [0,\pi\right[.
\end{equation}
\end{definition}

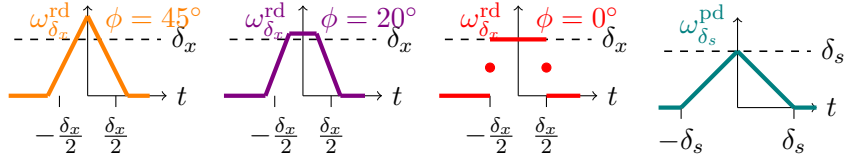
\begin{figure}
\centering
    \newcommand{\mywidthplot}{1.4}
    \newcommand{\myscale}{0.75}
    \newcommand{\myradius}{0.08}
    \newcommand{\mylinewidth}{ultra thick}
    \newcommand{\mycolor}{red}
    \newcommand{\mytextpositionone}{-0.8}
    \newcommand{\mytextpositiontwo}{1}
    \newcommand{\myarrowup}{1.6}
\begin{tikzpicture}
    \newcommand{\mycolorthree}{orange}
    
\pgfmathsetmacro{\myangle}{45}
\pgfmathsetmacro{\supper}{(cos(\myangle)+sin(\myangle))/2}
\pgfmathsetmacro{\sunder}{(-cos(\myangle)+sin(\myangle))/2}
\pgfmathsetmacro{\mykappa}{1/sin(\myangle)}

        \draw[scale=\myscale,->] (0,0) -- (\mywidthplot,0) node[right] {$t$};
        \draw[scale=\myscale,->] (0,0) -- (0,\myarrowup) ;

        \draw[scale=\myscale] (0.5,0) --(0.5,-0.2) node[below]{$\frac{\delta_x}{2}$};
        \draw[scale=\myscale] (-0.5,0) --(-0.5,-0.2) node[below]{$-\frac{\delta_x}{2}$};
        \draw[dashed, semithick, scale=\myscale] (-\mywidthplot+0.1,1) --(\mywidthplot-0.1,1) node[right]{$\delta_x$};
    
\draw[scale=\myscale, domain=-\mywidthplot:-\supper, smooth, variable=\x, \mycolorthree,\mylinewidth] plot ({\x}, {0});
\draw[scale=\myscale, domain=-\sunder:\sunder, smooth, variable=\x, \mycolorthree,\mylinewidth] plot ({\x}, {\mykappa});
\draw[scale=\myscale, domain=-\supper:-\sunder, smooth, variable=\x, \mycolorthree,\mylinewidth] plot ({\x}, {\mykappa*(\x+\supper)/(\supper-\sunder)});
\draw[scale=\myscale, domain=\sunder:\supper, smooth, variable=\x, \mycolorthree,\mylinewidth] plot ({\x}, {\mykappa*(-\x+\supper)/(\supper-\sunder)});
\draw[scale=\myscale, domain=\supper:\mywidthplot-0.3, smooth, variable=\x, \mycolorthree,\mylinewidth] plot ({\x}, {0});

    \draw[\mycolorthree] (\mytextpositionone,\mytextpositiontwo) node[right,xshift=-0.1cm] {$\RayWeight \quad \phi=45^\circ$};
\end{tikzpicture}
\begin{tikzpicture}
    \newcommand{\mycolorthree}{violet}
\pgfmathsetmacro{\myangle}{65}
\pgfmathsetmacro{\supper}{(cos(\myangle)+sin(\myangle))/2}
\pgfmathsetmacro{\sunder}{(-cos(\myangle)+sin(\myangle))/2}
\pgfmathsetmacro{\mykappa}{1/sin(\myangle)}

        \draw[scale=\myscale,->] (0,0) -- (\mywidthplot,0) node[right] {$t$};
        \draw[scale=\myscale,->] (0,0) -- (0,\myarrowup) ;

        \draw[scale=\myscale] (0.5,0) --(0.5,-0.2) node[below]{$\frac{\delta_x}{2}$};
        \draw[scale=\myscale] (-0.5,0) --(-0.5,-0.2) node[below]{$-\frac{\delta_x}{2}$};
        \draw[dashed, semithick, scale=\myscale] (-\mywidthplot+0.1,1) --(\mywidthplot-0.1,1) node[right]{$\delta_x$};
    
\draw[scale=\myscale, domain=-\mywidthplot:-\supper, smooth, variable=\x, \mycolorthree,\mylinewidth] plot ({\x}, {0});
\draw[scale=\myscale, domain=-\sunder:\sunder, smooth, variable=\x, \mycolorthree,\mylinewidth] plot ({\x}, {\mykappa});
\draw[scale=\myscale, domain=-\supper:-\sunder, smooth, variable=\x, \mycolorthree,\mylinewidth] plot ({\x}, {\mykappa*(\x+\supper)/(\supper-\sunder)});
\draw[scale=\myscale, domain=\sunder:\supper, smooth, variable=\x, \mycolorthree,\mylinewidth] plot ({\x}, {\mykappa*(-\x+\supper)/(\supper-\sunder)});
\draw[scale=\myscale, domain=\supper:\mywidthplot-0.3, smooth, variable=\x, \mycolorthree,\mylinewidth] plot ({\x}, {0});

\draw[\mycolorthree] (\mytextpositionone,\mytextpositiontwo) node[right,xshift=-0.1cm] {$\RayWeight \quad \phi=20^\circ$};
\end{tikzpicture}
    \begin{tikzpicture}

        \draw[scale=\myscale,->] (0,0) -- (\mywidthplot,0) node[right] {$t$};
        \draw[scale=\myscale,->] (0,0) -- (0,\myarrowup) ;

        \draw[scale=\myscale] (0.5,0) --(0.5,-0.2) node[below]{$\frac{\delta_x}{2}$};
        \draw[scale=\myscale] (-0.5,0) --(-0.5,-0.2) node[below]{$-\frac{\delta_x}{2}$};
        \draw[dashed, semithick, scale=\myscale] (-\mywidthplot+0.1,1) --(\mywidthplot-0.1,1) node[right]{$\delta_x$};

        \draw[scale=\myscale, domain=-\mywidthplot:-0.5, smooth, variable=\x, \mycolor,\mylinewidth] plot ({\x}, {0});
        \draw[scale=\myscale,fill,\mycolor] (-0.5,0.5) circle (\myradius cm);
        \draw[scale=\myscale, domain=-0.5:0.5, smooth, variable=\x, \mycolor,\mylinewidth] plot ({\x}, {1});
        \draw[scale=\myscale,fill,\mycolor] (0.5,0.5) circle (\myradius cm);
        \draw[scale=\myscale, domain=0.5:\mywidthplot-0.3, smooth, variable=\x, \mycolor,\mylinewidth] plot ({\x}, {0});

        \draw[\mycolor] (\mytextpositionone,\mytextpositiontwo) node[right,xshift=-0.3cm] {$\RayWeight \quad \phi=0^\circ$};;
        \end{tikzpicture}
\begin{tikzpicture}
    \newcommand{\mycolorfour}{teal}

        \draw[scale=\myscale,->] (0,0) -- (\mywidthplot,0) node[right] {$t$};
        \draw[scale=\myscale,->] (0,0) -- (0,\myarrowup) ;

        \draw[scale=\myscale] (1,0) --(1,-0.2) node[below]{$\delta_s$};
        \draw[scale=\myscale] (-1,0) --(-1,-0.2) node[below]{$-\delta_s$};
        \draw[dashed, semithick, scale=\myscale] (-\mywidthplot+0.1,1) --(\mywidthplot-0.1,1) node[right]{$\delta_s$};
    
\draw[scale=\myscale, domain=-\mywidthplot:-1, smooth, variable=\x, \mycolorfour,\mylinewidth] plot ({\x}, {0});
\draw[scale=\myscale, domain=-1:0, smooth, variable=\x, \mycolorfour,\mylinewidth] plot ({\x}, {1+\x});
\draw[scale=\myscale, domain=-0:1, smooth, variable=\x, \mycolorfour,\mylinewidth] plot ({\x}, {1-\x});

\draw[scale=\myscale, domain=1:\mywidthplot, smooth, variable=\x, \mycolorfour,\mylinewidth] plot ({\x}, {0});

    \draw[\mycolorfour] (\mytextpositionone,\mytextpositiontwo) node [right,xshift=-0.1cm,yshift=0.1cm] {$\PixelWeight$};
\end{tikzpicture}

\caption{Depiction of the ray-driven weight function $t\mapsto \delta_x^2\RayWeight(\phi,t)$ for fixed $\phi\in \{0^\circ,20^\circ,45^\circ\}$ in the first three plots. For fixed $\phi$, these are trapezoid functions (like the $20^\circ$ case), whose incline, height, and width depend on $\phi$ and $\delta_x$. In the extreme case $\phi=45^\circ$, the function turns into a hat function, while for $\phi=0^\circ$ it turns into a piecewise constant function (note the values for $\pm \frac{\delta_x}{2}$). On the right, in the last plot, the pixel-driven weight function $t\mapsto \delta_s^2\PixelWeight(t)$ (a hat function) independent of $\phi$ is shown. Note the difference in scales between the ray-driven and pixel-driven functions.}
\label{Fig_graphs_ray_driven}

\end{figure}
Note that for fixed $\phi\not \in  \frac{\pi}{2}\ZZ$, the map $t\mapsto \RayWeight(\phi,t)$ has a trapezoidal structure, see Figure \ref{Fig_graphs_ray_driven}, with the first case of \eqref{equ_def_ray_weight} relating to the constant inner part, the second case connecting this constant part with zero in a continuous ramp and the fourth condition reflecting zero values outside.
But when $\phi\in  \frac{\pi}{2}\ZZ$, the second case in \eqref{equ_def_ray_weight} is empty (since $\underline s(\phi)=\overline s(\phi)$) and thus a discontinuity appears when one wants the function to be constant ($\kappa(\phi)=1$) inside and zero for $|t|>\overline s$. To bridge this discontinuity, the third case relates to half the jump height on the boundary points; see Figure \ref{Fig_graphs_ray_driven}.

The ray-driven method (as described in the literature) uses the intersection lengths of lines and pixels as weights, i.e., $A[qp,ij] = \mathcal{H}^1(L_{\phi_q,s_p}\cap X_{ij})$ (again, $\mathcal{H}^1$ denotes the one-dimensional Hausdorff measure). These are computed in an iterative manner following the ray and calculating the next intersection of the ray with the pixel grid from previous positions in a finite-step search  \cite{doi:10.1118/1.4761867}; see Figure \ref{Fig_ray_driven_interpretation}. The value $\delta_x^2 \RayWeight(\phi_q,x_{ij}\cdot\vartheta_q-s_p)$ is a closed-form expression of this weight (see Lemma \ref{Lemma_weight_as_intersection_length} below) required for the more structured analysis we will execute in Section \ref{section_Radon_proofs}. The special case $\phi\in \frac{\pi}{2}\ZZ \text{ and } |t| =\overline s (\phi)$ in \eqref{equ_def_ray_weight} relates to when $L_{\phi,s}\cap X_{ij} = L_{\phi,s}\cap \partial X_{ij}$ is one side of the pixel $X_{ij}$. To avoid counting said edge twice (once for each of the pixels containing the edge), we attribute half the intersection length to either of the two pixels sharing this side. This choice is somewhat arbitrary; what matters is that they sum up to $1$.

The pixel-driven weight is such that there are at most two $p$ (for fixed $q\in [N_\phi]$ and $i,j\in [N_x]$) such that $A[qp,ij]\neq 0$, and whose sum equals 1 (see Lemma \ref{Lemma_weight_length}). One can imagine the pixel's contribution is distributed onto the two closest lines; one speaks of anterpolation. Moreover, this results in a backprojection with linear interpolation (with respect to the detector dimension) of the closest relevant detector pixels; see Figure \ref{Fig_ray_driven_interpretation}.

\begin{lemma}[Closed form of the intersection length]
\label{Lemma_weight_as_intersection_length}
Given $\delta$, $\phi \in [0,\pi[$ and $s \in \RR$,
we have 
\begin{equation} \label{equ_lemma_intersectionlength}
\delta_x^2 \RayWeight(\phi,x_{ij}\cdot \vartheta_\phi - s)= \mathcal{H}^1(L_{\phi,s}\cap X_{ij}) - \frac 1 2 \mathcal{H}^1(L_{\phi,s}\cap \partial X_{ij}) .
\end{equation}
\end{lemma}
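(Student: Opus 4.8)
The plan is to reduce \eqref{equ_lemma_intersectionlength} to a statement about the length of the chord a line cuts out of one fixed square, and then to verify that statement by a symmetry argument followed by a short case analysis.

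\emph{Reduction to a centred square.} From $\vartheta_\phi\cdot\vartheta_\phi=1$ and $\vartheta_\phi\cdot\vartheta_\phi^\perp=0$ one gets $L_{\phi,s}=\{y\in\RR^2:y\cdot\vartheta_\phi=s\}$. Setting $t:=x_{ij}\cdot\vartheta_\phi-s$, $Q:=[-\tfrac{\delta_x}{2},\tfrac{\delta_x}{2}]^2$ and $M_\tau:=\{z\in\RR^2:z\cdot\vartheta_\phi=\tau\}$, the translation $y=x_{ij}+z$ turns $L_{\phi,s}\cap X_{ij}$ into $x_{ij}+(Q\cap M_{-t})$ and $L_{\phi,s}\cap\partial X_{ij}$ into $x_{ij}+(\partial Q\cap M_{-t})$. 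Since $\mathcal{H}^1$ is translation invariant, since $z\mapsto-z$ maps $Q$, $\partial Q$, $M_\tau$ to $Q$, $\partial Q$, $M_{-\tau}$, and since $\RayWeight(\phi,\cdot)$ is even, it suffices to prove
\[
\mathcal{H}^1(Q\cap M_\tau)-\tfrac12\,\mathcal{H}^1(\partial Q\cap M_\tau)=\delta_x^2\,\RayWeight(\phi,\tau)\qquad\text{for }\phi\in[0,\pi[\ \text{and}\ \tau\ge0.
\]
The dihedral symmetries $z\mapsto(z_2,z_1)$ and $z\mapsto(\pm z_1,\pm z_2)$ fix $Q$ and $\partial Q$ and only permute or flip the entries of $\vartheta_\phi$; since $\overline s(\phi),\underline s(\phi),\kappa(\phi)$ and $|\cos\phi\sin\phi|$ — hence $\RayWeight(\phi,\cdot)$ — depend only on the unordered pair $\{|\cos\phi|,|\sin\phi|\}$, I may assume $\cos\phi\ge\sin\phi\ge0$, i.e. $\phi\in[0,\tfrac\pi4]$; then $\underline s(\phi)=\tfrac{\delta_x}{2}(\cos\phi-\sin\phi)$, $\overline s(\phi)=\tfrac{\delta_x}{2}(\cos\phi+\sin\phi)$, $\kappa(\phi)=\tfrac1{\cos\phi}$, and $z\mapsto z\cdot\vartheta_\phi$ sends the four corners of $Q$ to $\pm\overline s(\phi)$ and $\pm\underline s(\phi)$.

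\emph{The two cases.} If $\phi=0$ — the only multiple of $\tfrac\pi2$ in $[0,\tfrac\pi4]$ — then $M_\tau$ is vertical and $\underline s(0)=\overline s(0)=\tfrac{\delta_x}{2}$, $\kappa(0)=1$: for $\tau\in[0,\tfrac{\delta_x}{2}[$ the chord $Q\cap M_\tau$ is a full edge-to-edge segment of length $\delta_x$ meeting $\partial Q$ in only two points; for $\tau=\tfrac{\delta_x}{2}$ it coincides with the right edge, so $Q\cap M_\tau=\partial Q\cap M_\tau$ has length $\delta_x$; for $\tau>\tfrac{\delta_x}{2}$ it is empty. These subcases match the first, third and ``else'' branches of \eqref{equ_def_ray_weight}. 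If $\phi\in\,]0,\tfrac\pi4]$ then $\cos\phi,\sin\phi>0$, $M_\tau$ is oblique, $\partial Q\cap M_\tau$ has at most two points and the correction term vanishes, so it remains to check $\mathcal{H}^1(Q\cap M_\tau)=\delta_x^2\RayWeight(\phi,\tau)$. For $\tau\ge\overline s(\phi)$ the line meets $Q$ in at most one corner and the chord has length $0$ (``else'' branch). For $\tau\in[\underline s(\phi),\overline s(\phi)[$ the chord joins the top edge $z_2=\tfrac{\delta_x}{2}$ to the right edge $z_1=\tfrac{\delta_x}{2}$; solving $z_1\cos\phi+z_2\sin\phi=\tau$ for the two endpoints gives horizontal and vertical differences $\tfrac{\overline s(\phi)-\tau}{\cos\phi}$ and $\tfrac{\overline s(\phi)-\tau}{\sin\phi}$, hence length $\tfrac{\overline s(\phi)-\tau}{\cos\phi\sin\phi}$, which equals $\delta_x^2\RayWeight(\phi,\tau)$ by the second branch. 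For $\tau\in[0,\underline s(\phi)[$ (present only when $\cos\phi>\sin\phi$) the chord joins the top and bottom edges with horizontal difference $\delta_x\tan\phi$ and vertical difference $\delta_x$, hence length $\tfrac{\delta_x}{\cos\phi}=\delta_x\kappa(\phi)$, the first branch. As $[0,\infty[\,=[0,\underline s(\phi)[\,\cup\,[\underline s(\phi),\overline s(\phi)[\,\cup\,[\overline s(\phi),\infty[$, this would complete the proof.

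\emph{Main obstacle.} There is no conceptual difficulty, only bookkeeping, and the single point that needs genuine care is the boundary-correction term $-\tfrac12\mathcal{H}^1(L_{\phi,s}\cap\partial X_{ij})$: it is inert in every non-degenerate configuration (where $L_{\phi,s}\cap\partial X_{ij}$ is a finite set) but is exactly what is needed when $L_{\phi,s}$ contains a whole edge of $X_{ij}$, i.e. when $\phi\in\tfrac\pi2\ZZ$ and $|x_{ij}\cdot\vartheta_\phi-s|=\tfrac{\delta_x}{2}$, where one must check that the factor $\tfrac12$ in the third branch of \eqref{equ_def_ray_weight} makes both sides equal $\tfrac{\delta_x}{2}$. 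The other thing to be attentive about is that the symmetry reduction really covers all $\phi\in[0,\pi[$ after quotienting by the dihedral action of the square. (One could also note that, by the coarea formula, $\tau\mapsto\mathcal{H}^1(Q\cap M_\tau)$ is the convolution of the two box functions obtained by projecting the edge directions of $Q$ onto $\vartheta_\phi$, which explains the trapezoidal profile in Figure \ref{Fig_graphs_ray_driven}; but this identifies the two sides only up to null sets, so the endpoint analysis above is still required.)
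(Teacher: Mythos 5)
Your proof is correct and follows essentially the same route as the paper's appendix proof: translate to the centered square $[-\tfrac{\delta_x}{2},\tfrac{\delta_x}{2}]^2$, case-split on $|t|$ relative to $\underline s(\phi)$ and $\overline s(\phi)$, and observe that the boundary term only matters when $\phi\in\tfrac{\pi}{2}\ZZ$ and the line contains a whole edge, where the factor $\tfrac12$ makes both sides equal $\tfrac{\delta_x}{2}$. Your extra dihedral-symmetry normalization to $\phi\in[0,\tfrac{\pi}{4}]$, $\tau\ge 0$ and the explicit coordinate computation of the chord endpoints merely replace the paper's synthetic right-triangle arguments (hypotenuse over a side of length $\delta_x$, and the area identity $ab=rh$) without changing the substance.
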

The proof of this statement is quite geometric with multiple case distinctions and is found in the Appendix.

In order to compare the matrices representing discretizations with $\Radon$ and $\Radon^*$, we next reinterpret them as finite rank operators mapping from $L^2(\imgdom)$ to $L^2(\sinodom)$ or vice versa. More precisely, they map into $U_\delta$ and $V_\delta$ spanned by $u_{ij}:=\chi_{X_{ij}}-\frac{1}{2}\chi_{\partial X_{ij}}$ and $v_{qp}:=\chi_{\Phi_q\times S_p}$ for $i,j\in [N_x]$, $q\in [N_\phi]$ and $p\in [N_s]$, respectively.
\begin{definition} [Convolutional discretizations]
	We choose the ray-driven or pixel-driven setting by choosing $\omega \in \{\RayWeight,\PixelWeight\}$.
    Given $\delta$, the ray-driven Radon transform $\RayRadon$ and the pixel-driven Radon transform $\PixelRadon$, respectively, are defined via $\GenRadon\colon L^2(\imgdom)\to L^2(\sinodom)$, such that, for a function $f\in L^2(\imgdom)$, 
    \begin{equation}\label{equ_def_convolutional_Radon_transform}
[\GenRadon f](\phi,s) := \sumphi\sums v_{qp}(\phi,s) \sumx \omega (\phi_q,x_{ij}\cdot \vartheta_q -s_p) \int_{X_{ij}} f(x) \dd x.
    \end{equation}

    The corresponding ray-driven or pixel-driven backprojections $\RayRadon^*$ and $\PixelRadon^*$ respectively are  defined via $\GenRadon^*\colon L^2(\sinodom)\to L^2(\imgdom)$ according to
    \begin{equation} \label{equ_def_convolution_backprojection}
        [\GenRadon^* g](x) : =   \sumx u_{ij}(x) \sumphi\sums \omega(\phi_q,x_{ij}\cdot \vartheta_q -s_p) \int_{\Phi_q\times S_p} g(\phi,s) \dd{(\phi,s)}
    \end{equation}
    for $g\in L^2(\sinodom)$ and $x\in \imgdom$. 
\end{definition}



 Note that the output of these operators is constant on the pixels, thus mapping $U_\delta$ to $V_\delta$ or vice-versa. Let $\overline f \in \RR^{N_x^2}$ and $\overline g\in \RR^{N_\phi\cdot N_s}$ be vectors whose entries coincide with the coefficients $f_{ij}=\frac{1}{\delta_x^2}\int_{X_{ij}}f \dd x$ and $g_{qp}=\frac{1}{|\Phi_q|\delta_x}\int_{\Phi_q\times S_p}g \dd{(\phi,s)}$, i.e., $\overline{f}$ and $\overline{g}$ are the coefficient vectors of $f$ and $g$. For these vectors, the matrices $A\approx \GenRadon$ and $B\approx \GenRadon^*$ perform
\begin{align} 
&(A \overline f)[{qp}] = \delta_x^2\sumx \omega(\phi_q,x_{ij}\cdot \vartheta_q-s_p) \overline f[{ij}] , \label{equ_def_convolutional_Radon_discrete}
\\
  &(B \overline g)[{ij}] = \delta_s \sumphi |\Phi_q| \sums \omega(\phi_q,x_{ij}\cdot \vartheta_q-s_p) \overline g[{qp}], \label{equ_def_convolutional_backproj_discrete}
\end{align}
i.e., $A[qp,ij]= \delta_x^2  \omega(\phi_q,x_{ij}\cdot \vartheta_q-s_p)$ and $B[ij,qp]= \delta_s |\Phi_q| \omega(\phi_q,x_{ij}\cdot \vartheta_q-s_p)$. In particular, if $|\Phi_q|=\delta_\phi$ constant, $A^T = \frac{\delta_x^2}{\delta_\phi\delta_s} B$. So these matrices are also adjoint in a discrete sense. The different prefactors relate to the scaling in $U_\delta$ and $V_\delta$ (rather than $\RR^{N_x^2}$ and $\RR^{N_\phi \cdot N_s}$) and has nothing to do with an adjoint mismatch, but rather is the native scaling for these operators.
Plugging $\RayWeight$ and $\PixelWeight$ in, these matrix multiplications coincide (up to scaling) with the classical definitions of the ray-driven and pixel-driven methods.

\begin{figure}
\centering

\begin{tikzpicture}[scale=0.8]

 \definecolor{light_gray}{gray}{0.9}
\definecolor{light_gray2}{gray}{0.7}
\pgfmathsetmacro {\myangle}{30}
\pgfmathsetmacro {\myRE}{3.5}
\pgfmathsetmacro {\myRD}{3.}
\pgfmathsetmacro {\myDW}{2}
\pgfmathsetmacro {\Ns}{6} 
\pgfmathsetmacro {\N}{5}

\pgfmathsetmacro {\myposition}{\Ns/2-2}
\pgfmathsetmacro {\myx}{\myRD+\myRE}
\pgfmathsetmacro {\myy}{-\myposition/\Ns*\myDW*2+\myDW-1/\Ns*\myDW}
\pgfmathsetmacro {\mypositionm}{\myposition-1}
\pgfmathsetmacro {\mypositionp}{\myposition+1}

\pgfmathsetmacro {\myyplus}{-\mypositionp/\Ns*\myDW*2+\myDW-1/\Ns*\myDW}
\pgfmathsetmacro {\myyminus}{-\mypositionm/\Ns*\myDW*2+\myDW-1/\Ns*\myDW}

\pgfmathsetmacro {\mycos}{cos(\myangle+90)};
\pgfmathsetmacro {\mysin}{sin(\myangle+90)};

\pgfmathsetmacro {\mynewx}{\mycos*\myx};
\pgfmathsetmacro {\mynewy}{\mysin*\myx};

\pgfmathsetmacro {\mycos}{cos(\myangle)};
\pgfmathsetmacro {\mysin}{sin(\myangle)};

\pgfmathsetmacro {\sourcex}{\mycos*(\myRE)-\mysin*(\myy)};
\pgfmathsetmacro {\sourcey}{\mysin*(\myRE)+ \mycos*\myy};

\pgfmathsetmacro {\mynorm}{sqrt(\mynewx*\mynewx+\mynewy*\mynewy)};
\pgfmathsetmacro {\mynewx}{\mynewx/\mynorm};
\pgfmathsetmacro {\mynewy}{\mynewy/\mynorm};

\pgfmathsetmacro {\projectionx}{\mynewy};
\pgfmathsetmacro {\projectiony}{-\mynewx};

\foreach \x in {0,...,\N}
{
\draw[black,thick] (4/\N*\x-2,-2) -- (4/\N*\x-2,2);
\draw[black,thick] (-2,4/\N*\x-2) -- (2,4/\N*\x-2);
}

\foreach \x in {1,...,\N}
{
\foreach \y in {1,...,\N}
{
\pgfmathsetmacro {\posx}{4/\N*\x-2-2/\N};
\pgfmathsetmacro {\posy}{4/\N*\y-2-2/\N};

\pgfmathsetmacro {\posxnew}{\posx-\sourcex};
\pgfmathsetmacro {\posynew}{\posy-\sourcey};

\pgfmathsetmacro {\valp}{\posxnew*\mynewx+\posynew*\mynewy};

}
}
\draw[thick] (0,0) circle (2cm);

\draw[very thick,dashed, magenta,rotate around={\myangle:(0,0)}] (-\myRE,\myy) -- (\myRD+0.4,\myy) node [midway,cyan,xshift=-0.125cm,yshift=-0.2cm] {};

\foreach \x/\y/\mycolor in {0/2/orange,0/1/yellow,1/1/violet,2/1/cyan,2/0/purple,3/0/green,4/0/red}
{
\pgfmathsetmacro {\myupper}{2-\y/\N*4};
\pgfmathsetmacro {\mylower}{2-\y/\N*4-4/\N};
\pgfmathsetmacro {\myleft}{-2+\x/\N*4};
\pgfmathsetmacro {\myright}{-2+\x/\N*4+4/\N};

\begin{scope}
 \clip[] (\myleft,\mylower) rectangle (\myright,\myupper);
 \draw[ultra thick, \mycolor,rotate around={\myangle:(0,0)}] (-\myRE,\myy) -- (\myRD+0.4,\myy) node [midway,cyan,xshift=-0.125cm,yshift=-0.2cm] {};
 \end{scope}
}

\draw [very thick,fill=light_gray,rotate around={\myangle:(0,0)}] (\myRD,-\myDW) rectangle (\myRD+0.4,\myDW) node[midway,above,rotate=\myangle-90,yshift=0.1cm]{Detector};


\foreach \s in {1,...,\Ns}
{
\draw [rotate around={\myangle:(0,0)}] (\myRD,\s/\Ns*\myDW*2-\myDW) -- (\myRD+0.4,\s/\Ns*\myDW*2-\myDW) ;

}


\draw [very thick,dashed,fill=magenta!30!white,rotate around={\myangle:(0,0)}] (\myRD,-\myposition/\Ns*\myDW*2+\myDW-0/\Ns*\myDW*2) rectangle (\myRD+0.4,-\myposition/\Ns*\myDW*2+\myDW-1/\Ns*\myDW*2);
  \end{tikzpicture}
  \qquad
  \begin{tikzpicture}[scale=0.75]

  \draw [fill=gray!20] (0,2) rectangle (6.28,-2);

\draw[->] (0, 0) -- (6.5, 0) node[right] {$\phi$};
  \draw[->] (0, -1) -- (0, 2.5) node[above] {$s$};
   \pgfmathtruncatemacro\myindex{14}
  \pgfmathsetmacro {\mydomain}{6.28}
  \draw[scale=1, domain=0:\mydomain, smooth, variable=\x, violet, ultra thick] plot ({\x}, {2*0.8*cos(deg(\x/2-2))});
 
 \pgfmathtruncatemacro\Nangle{10}
  \foreach \i in {0,...,\Nangle}
  {
    \pgfmathsetmacro {\x}{6.28*\i/\Nangle}
  \draw[ dashed ] (\x,-2) -- (\x,2);
  }

 \pgfmathtruncatemacro\Ns{4}
  \foreach \i in {0,...,\Ns}
  {
    \pgfmathsetmacro {\x}{2*(2*\i/\Ns-1)}
  \draw[ dashed ] (0,\x) -- (6.28,\x);
  }

\foreach \i in {1,...,\Nangle} 
 {
     \pgfmathsetmacro {\x}{6.28*\i/\Nangle-6.28/\Nangle/2}
 \draw[ mark=x,mark size =3,scale=1, domain=\x:\x+0.01, smooth, variable=\x, cyan, thick] plot ({\x}, {2*0.8*cos(deg(\x/2-2))});
 }

 \begin{scope}

\clip (0,2) rectangle (6.28,-2);

 \foreach \i/\j in {1/2,2/2,3/3,4/3,5/3,6/4,7/4,8/3,9/3,10/3} 
 {
     \pgfmathsetmacro {\x}{6.28*\i/\Nangle-6.28/\Nangle/2}
   \pgfmathsetmacro {\y}{2*(2*\j/\Ns-1-1/\Ns)}
 \pgfmathsetmacro {\z}{2*(2*\j/\Ns-1+1/\Ns)}

\draw[ mark=x,mark size =3,scale=1, domain=\x:\x+0.01, smooth, variable=\x, orange, thick] plot ({\x}, {\y}); 
 
 \draw[ mark=x,mark size =3, domain=\x:\x+0.01, smooth, variable=\x, orange, thick] plot ({\x}, {\z});
 }
 
  \end{scope}
  \node[] at (3.1,2.3) {$\sinodom$};
\end{tikzpicture}
    \caption{Illustration of the ray-driven forward (left) and the pixel-driven backprojection (right). The ray-driven method splits integration along a straight line into the sum of values on pixels times their intersection length (colored segments). The pixel-driven backprojection approximates the angular integral \eqref{equ_def_backprojection} (along the violet curve $x\cdot\vartheta_\phi$) by a finite sum (Riemann sum) of angular evaluations $x\cdot\vartheta_q$ (the cyan crosses), whose values are approximated via linear interpolation in the detector dimension (using the neighboring orange pixel centers).}
    \label{Fig_ray_driven_interpretation}
\end{figure}
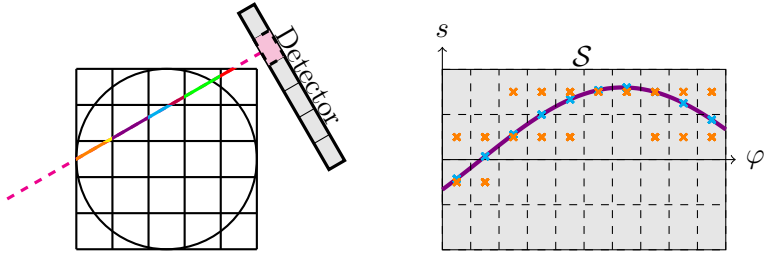

\begin{remark} \label{remark_using_hausdorff_ae}
Obviously, $u_{ij}=\chi_{X_{ij}}- \frac{1}{2} \chi_{\partial X_{ij}}=\chi_{X_{ij}}$ in a (Lebesgue) almost everywhere sense. Thus, also $\Radon (\chi_{X_{ij}}- \frac{1}{2} \chi_{X_{ij}})= \Radon \chi_{X_{ij}}$ almost everywhere and values of Lebesgue null sets are irrelevant.
However, if we evaluate the Radon transform pointwise for discretization purposes, suddenly Lebesgue null sets in $\imgdom$ and $\sinodom$ could be relevant; we are particularly concerned with the angles in $\frac{\pi}{2}\ZZ$ and $|x_{ij}\cdot \vartheta_q-s_p|=\underline s(\phi)=\delta_x$. Rather, due to the Radon transform's relation to the one-dimensional Hausdorff measure $\mathcal{H}^1$, we define $u_{ij}$ in an $\mathcal{H}^1$ almost everywhere sense, with the value $1$ in the interior of $X_{ij}$, $\frac{1}{2}$ on the boundary. The corners are a bit of a special case as we would actually like the values to be $\frac{1}{4}$, but these are an $\mathcal{H}^1$ null set and thus of no consequence. Analogously, functions $f_\delta\in U_\delta$ are understood as being defined $\mathcal{H}^1$ almost everywhere. Note that $U_\delta$ is not a native subset of $L^2(\imgdom)$, but of $L^2([-1,1]^2)$. We tacitly extend the definition of the Radon transform in \eqref{equ_def_radon_transform} to $L^2([-1,1]^2)$ where necessary. One could make analogous constructions for the $v_{qp}$ basis being defined $\mathcal{H}^1$ almost everywhere, however, these will not be necessary for our analysis below.

\end{remark}

\section{Convergence Analysis}
\label{section_Radon_proofs}

\subsection{Formulation  of convergence results}
\label{section_results}
These discretization frameworks have been known for decades (at least as practical implementations), but no rigorous convergence analysis was conducted. Our interpretation of discretizations as finite rank operators (via convolutional discretizations) allows the comparison of `continuous' and `discrete' operators. Those comparisons culminate in Theorem \ref{Thm_approximation_ray_driven}, which complements anecdotal reports on the performance of discretization approaches by describing convergence in the strong operator topology (SOT).

\begin{theorem}[Convergence in the strong operator topology] \label{Thm_approximation_ray_driven} \ \\
    Let $(\delta^n)_{n\in \NN}=(\delta_x^n,\delta_\phi^n,\delta_s^n)_{n\in \NN}$ be a  sequence of discretization parameters with $\delta^n\overset{n\to \infty}{\to} 0$ (componentwise) and let $c>0$ be a constant.
    \\ If $\frac{\delta_s^n}{\delta_x^n} \leq c$ for all $n\in \NN$, then,  for any $f\in L^2(\imgdom)$, we have
    \begin{equation}\label{equ_thm_ray_radon}
        \lim_{n\to \infty} \|\Radon f-\RayRadonN f\|_{L^2(\sinodom)}=0.
        \tag{$\textrm{conv}^\mathrm{rd}$}
    \end{equation}
    If the sequence $(\delta^n)_{n\in \NN}$ satisfies $\frac{\delta_s^n}{\delta_x^n}\overset{n\to \infty}{\to} 0$, then, for each $g\in L^2(\sinodom)$, we have
        \begin{equation}\label{equ_thm_ray_estimate_backprojeciton}
        \lim_{n\to \infty} \|\Radon^* g-\RayRadonN^* g\|_{L^2(\imgdom)}=0.
        \tag{$\textrm{conv}^\mathrm{rd*}$}
    \end{equation}
        If $\frac{\delta_x^n}{\delta_s^n} \leq c$ for all $n\in \NN$, then, for each $g\in L^2(\sinodom)$, we have
    \begin{equation}\label{equ_thm_pixel_backprojection}
        \lim_{n\to \infty}\|\Radon^* g - \PixelRadonN^* g\|_{L^2(\imgdom)} =0
        \tag{$\textrm{conv}^\mathrm{pd*}$}.
    \end{equation}
       If the sequence $(\delta^n)_{n\in \NN}$ satisfies $\frac{\delta_x^n}{\delta_s^n}\overset{n\to \infty}{\to} 0$, then, for each $f\in L^2(\imgdom)$, we have
        \begin{equation}\label{equ_thm_pixel_radon}
        \lim_{n\to \infty} \|\Radon f-\PixelRadonN f\|_{L^2(\sinodom)}=0.
        \tag{$\textrm{conv}^\mathrm{pd}$}
    \end{equation}
\end{theorem}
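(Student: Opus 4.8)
\emph{Overall strategy.} I would prove all four convergences from one template: \textbf{(i)} a uniform operator-norm bound $\sup_n\|\cdot^n\|<\infty$ along the sequence, \textbf{(ii)} convergence on a dense class of smooth functions, and \textbf{(iii)} a $3\varepsilon$-argument. For (iii), in the ray-driven forward case: if $C:=\sup_n\|\RayRadonN\|<\infty$ and $\RayRadonN h\to\Radon h$ for all $h$ in a dense set $D\subset L^2(\imgdom)$, then for arbitrary $f$ and any $h\in D$ one has $\limsup_n\|\Radon f-\RayRadonN f\|_{L^2(\sinodom)}\le(\|\Radon\|+C)\|f-h\|_{L^2(\imgdom)}$, and letting $h\to f$ gives $(\textrm{conv}^\mathrm{rd})$; the other three statements follow identically with $\Radon,\RayRadon$ replaced by $\Radon^*,\PixelRadon^*$, etc. I take $D=C_c^\infty(\imgdom)$ for the forward statements and $D=C_c^\infty(\sinodom)$ for the backprojection statements, using throughout the elementary facts that $\|\Radon g\|_{L^\infty}\le 2\sqrt2\,\|g\|_{L^\infty}$ (a line meets $[-1,1]^2$ in length $\le 2\sqrt2$), that $\Radon h$ is uniformly continuous on $\overline{\sinodom}$ for $h\in C_c^\infty(\imgdom)$, that the value-preserving map $f\mapsto f_\delta$ onto $U_\delta$ (and its analogue onto $V_\delta$) converges strongly to the identity, and $\|f_\delta\|_{L^2}\le\|f\|_{L^2}$; the artificial boundary at $\phi\in\{0,\pi\}$ is handled via the reflection symmetry of Remark~\ref{remark_angular_domain} where convenient.

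\emph{Uniform bounds (the crux).} Writing $w^{qp}_{ij}:=\delta_x^2\,\omega(\phi_q,x_{ij}\cdot\vartheta_q-s_p)\ge0$, equation \eqref{equ_def_convolutional_Radon_transform} gives $[\GenRadon f]=\sum_{i,j}w^{qp}_{ij}f_{ij}$ on $\Phi_q\times S_p$, so by Cauchy--Schwarz with these nonnegative weights,
\[
\|\GenRadon f\|_{L^2(\sinodom)}^2=\sum_{q,p}|\Phi_q|\,\delta_s\Big|\sum_{i,j}w^{qp}_{ij}f_{ij}\Big|^2\le\Big(\sup_{q,p}\sum_{i,j}w^{qp}_{ij}\Big)\Big(\sup_{i,j}\delta_x^{-2}\sum_q|\Phi_q|\,\delta_s\sum_p w^{qp}_{ij}\Big)\|f\|_{L^2(\imgdom)}^2 .
\]
For $\omega=\RayWeight$, Lemma~\ref{Lemma_weight_as_intersection_length} identifies $w^{qp}_{ij}$ with an intersection length, so $\sum_{i,j}w^{qp}_{ij}=\mathcal H^1(L_{\phi_q,s_p}\cap[-1,1]^2)\le 2\sqrt2$, and $s\mapsto\mathcal H^1(L_{\phi_q,s}\cap X_{ij})$ is a nonnegative trapezoid of integral $\delta_x^2$, supremum $\le\sqrt2\,\delta_x$ and variation $\le 2\sqrt2\,\delta_x$, whence $\delta_s\sum_p w^{qp}_{ij}\le\delta_x^2+2\sqrt2\,\delta_s\delta_x$; using $\delta_s\le c\,\delta_x$ one gets $\|\RayRadon\|^2\le 2\sqrt2\,\pi(1+2\sqrt2\,c)$. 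For $\omega=\PixelWeight$ the two sums swap roles: $\delta_s\sum_p w^{qp}_{ij}=\delta_x^2\sum_p\max\{1-|x_{ij}\cdot\vartheta_q-s_p|/\delta_s,0\}\le\delta_x^2$, while $\sum_{i,j}w^{qp}_{ij}$ is a mesh-$\delta_x$ midpoint quadrature for $\int_{[-1,1]^2}\PixelWeight(\phi_q,x\cdot\vartheta_q-s_p)\,\mathrm dx\le 2\sqrt2$ with error $O(\delta_x/\delta_s+\delta_x^2/\delta_s^2)$, bounded when $\delta_x\le c\,\delta_s$; hence $\|\PixelRadon\|^2\le\pi M(c)$. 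Since a direct computation from \eqref{equ_def_convolution_backprojection}--\eqref{equ_def_convolutional_backproj_discrete} shows the backprojection operators are the $L^2$-adjoints of the forward ones, $\|\RayRadon^*\|=\|\RayRadon\|$ and $\|\PixelRadon^*\|=\|\PixelRadon\|$, so the required uniform bound is available in all four cases (in fact under the weaker ``$\le c$'' hypotheses).

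\emph{Convergence on the dense class.} Here $\delta^n\to0$ alone suffices for $(\textrm{conv}^\mathrm{rd})$ and $(\textrm{conv}^{\mathrm{pd}*})$, whereas the two mismatched statements need the ratio to vanish. For $(\textrm{conv}^\mathrm{rd})$ and $f\in C_c^\infty(\imgdom)$, Lemma~\ref{Lemma_weight_as_intersection_length} gives $\RayRadon f=I^{\mathrm{sino}}_\delta\Radon f_\delta$, where $I^{\mathrm{sino}}_\delta$ samples at $(\phi_q,s_p)$ and holds; then $\|\Radon f-\RayRadon f\|_{L^2}\le\|\Radon\|\,\|f-f_\delta\|_{L^2}+|\sinodom|^{1/2}\max_{q,p}\operatorname{osc}_{\Phi_q\times S_p}(\Radon f_\delta)\to0$, the oscillation term vanishing because $\Radon f$ is uniformly continuous and $\|\Radon f_\delta-\Radon f\|_{L^\infty}\le 2\sqrt2\,\omega_f(\sqrt2\,\delta_x)\to0$. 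For $(\textrm{conv}^{\mathrm{pd}*})$ and $g\in C_c^\infty(\sinodom)$, $[\PixelRadon^* g]$ on $X_{ij}$ equals $\sum_q|\Phi_q|\sum_p\max\{1-|x_{ij}\cdot\vartheta_q-s_p|/\delta_s,0\}\,g_{qp}$, a Riemann sum over $\phi$ of the piecewise-linear detector interpolant of the cell-averages $g_{qp}$; by standard modulus-of-continuity bounds (interpolation error $O(\delta_s)$, Riemann error $O(\delta_\phi)$, $|g_{qp}-g(\phi_q,s_p)|=O(\delta_\phi+\delta_s)$, $|x_{ij}-x|\le\sqrt2\,\delta_x$) this converges uniformly to $\int_0^\pi g(\phi,x\cdot\vartheta_\phi)\,\mathrm d\phi=[\Radon^* g](x)$. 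The mismatched cases are exactly where the asymmetric ratio enters: for $(\textrm{conv}^{\mathrm{rd}*})$ the interpolant is replaced by $\delta_s\sum_p\RayWeight(\phi_q,x_{ij}\cdot\vartheta_q-s_p)g_{qp}$, and since $\RayWeight(\phi_q,\cdot)$ is an approximate identity of width $O(\delta_x)$ and mass $1$ but variation $O(1/\delta_x)$, its $\delta_s$-spaced sampling reproduces $g(\phi_q,\cdot)$ only up to $O(\delta_s/\delta_x)$, so uniform convergence requires $\delta_s^n/\delta_x^n\to0$; dually for $(\textrm{conv}^\mathrm{pd})$, on $\Phi_q\times S_p$ one has $[\PixelRadon f]=\int_{[-1,1]^2}\PixelWeight(\phi_q,x\cdot\vartheta_q-s_p)f_\delta(x)\,\mathrm dx+O((\delta_x/\delta_s)\|f\|_\infty)$ (the error being the quadrature defect of the sharply peaked kernel against the piecewise-constant $f_\delta$), and the integral equals $[\Radon f](\phi_q,s_p)+O(\omega_f(\delta_s))$ because $\PixelWeight(\phi_q,\cdot-s_p)$ is an approximate identity of width $\delta_s$, so $\PixelRadon f=I^{\mathrm{sino}}_\delta\Radon f+o(1)$ in $L^\infty$ once $\delta_x^n/\delta_s^n\to0$, after which $\|I^{\mathrm{sino}}_\delta\Radon f-\Radon f\|_{L^2}\to0$ by uniform continuity of $\Radon f$.

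\emph{Main obstacle.} The architecture is routine; the real work lies in the second paragraph and the geometric bookkeeping. I expect the principal difficulties to be: (a) making the quadrature estimates for $\sum_{i,j}w^{qp}_{ij}$ and $\delta_s\sum_p w^{qp}_{ij}$ fully rigorous and confirming that the resolution constraint is precisely what is needed --- this is the quantitative content of ``balanced resolutions''; (b) the measure-theoretic care around axis-parallel angles $\phi_q\in\frac{\pi}{2}\mathbb Z$, around pixels $X_{ij}$ and detector cells $S_p$ protruding from $\imgdom$ and from the nominal detector range (recall $|x\cdot\vartheta_\phi|$ can reach $\sqrt2>1$), and around the $\mathcal H^1$-almost-everywhere conventions of Remark~\ref{remark_using_hausdorff_ae}, none of which affects the limits but all of which must be handled so that ``evaluate $\Radon f_\delta$ at $(\phi_q,s_p)$'' is legitimate; and (c) for $(\textrm{conv}^{\mathrm{rd}*})$ and $(\textrm{conv}^\mathrm{pd})$, quantifying the quadrature error of the peaked kernel sharply enough to pin it at order $\delta_s/\delta_x$ respectively $\delta_x/\delta_s$, which both yields the claimed convergence under the stronger hypothesis and indicates that the hypothesis cannot simply be dropped.
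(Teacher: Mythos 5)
Your proposal is correct and follows essentially the same route as the paper: uniform boundedness of the discrete operators via row/column weight sums (your Schur-test packaging via Cauchy--Schwarz is equivalent to the paper's Jensen-inequality argument with sub-probability measures, and yields the same constants), convergence on $\mathcal{C}^\infty_c$ via the exactness of the ray-driven scheme on $U_\delta$, the interpolation identity $\sum_p\PixelWeight(\cdot-s_p)=1/\delta_s$, and the $O(\delta_s/\delta_x)$ Riemann-sum defect of the trapezoidal ray-driven weights, all followed by the standard density argument. The only substantive deviation is that for \eqref{equ_thm_pixel_radon} you give a self-contained quadrature-plus-mollification estimate of order $\delta_x/\delta_s+\omega_f(\delta_s)$, whereas the paper imports the corresponding per-angle bound from an earlier operator-norm convergence result; both routes reach the same conclusion.
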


\begin{remark}
Note that both \eqref{equ_thm_ray_radon} and \eqref{equ_thm_pixel_backprojection} are applicable in the case $\delta_x^n\approx \delta_s^n$. Hence, using the $\mathrm{rd}$-$\mathrm{pd}^*$ approach for balanced resolutions is indeed justified in the sense that we have pointwise convergence of the operators (in $\mathrm{SOT}$).
In the unbalanced case $\frac{\delta_s^n}{\delta_x^n}\to 0$, also the $\mathrm{rd}$-$\mathrm{rd}^*$ approach is justified in the sense of $\mathrm{SOT}$. In the unbalanced case $\frac{\delta_x^n}{\delta_s^n}\to 0$, the $\mathrm{pd}-\mathrm{pd}^*$ approach is analogously justified.

Note that the convergence described in Theorem \ref{Thm_approximation_ray_driven} is not necessarily uniform, i.e., the speed of convergence could depend significantly on the specific functions $f$ and $g$ considered and might potentially get arbitrarily slow. In the proof of Theorem \ref{Thm_approximation_ray_driven} below, we actually get concrete estimates on convergence rates for smooth functions $f$ and $g$; see \eqref{equ_proof_anglewise_convergence}, \eqref{equ_proof_estimate_ray-driven_backprojection}, \eqref{equ_proof_pixel-driven_backprojection_estimate} and \eqref{equ_proof_thm_pixel_driven_radon_estimate_angles}.

If we had convergence in the operator norm (which we do not, convergence in the operator norm is a stronger property than convergence in SOT), the convergence speed would be uniform for all $f$ and $g$ in the respective $L^2$ unit balls.
\end{remark}

For the sake of completeness, note that in the case $\frac{\delta_x^n}{\delta_s^n}\to 0$ and $\frac{\delta_\phi^n}{\delta_s^n}\to 0$, the pixel-driven Radon transform and backprojection converge in the operator norm as discussed in \cite{doi:10.1137/20M1326635}. The statement \eqref{equ_thm_pixel_radon} will be a consequence of this result.

We described above the convergence for the `full' angular setting $\phi \in [0,\pi[$. Due to technical limitations, in practice one often considers limited angle situations, i.e., one considers only $\phi \in \mathcal{A}$ for an interval $\mathcal{A}=[a,b[\subset [0,\pi[$; we set $\sinodom_{\mathcal{A}} = \mathcal{A}\times ]-1,1[$ the corresponding sinogram domain. The limited angle Radon transform  $\Radon_\mathcal{A}\colon L^2(\imgdom) \to L^2(\sinodom_\mathcal{A})$ is the restriction of the classical Radon transform to $\sinodom_\mathcal{A}$. 

Discretizing $\Radon_\mathcal{A}$, one can proceed analogously to the discretization described in Section \ref{section_Radon_discretization}, but we only discretize $\mathcal{A}$ instead of $[0,\pi[$.
Given angles $\phi_0<\dots<\phi_{N_\phi-1} \in \mathcal{A}$, we consider the corresponding angular pixels $\widetilde \Phi_q :=\left[\frac{\phi_{q-1}+\phi_q}{2},\frac{\phi_{q+1}+\phi_q}{2}\right[$ for $q\in \{1,\dots,N_s-2\}$, $ \widetilde \Phi_0=\left[a,\frac{\phi_{0}+\phi_1}{2}\right[$ and $\widetilde \Phi_{N_\phi-1}= \left[\frac{\phi_{N_\phi-2}+\phi_{N_\phi-1}}{2},b\right[$. 
We denote with $\GenRadon_\mathcal{A}$ and $\GenRadon_\mathcal{A}^*$ the definitions of the operators $\GenRadon$ and $\GenRadon^*$ as in \eqref{equ_def_convolutional_Radon_transform} and \eqref{equ_def_convolution_backprojection} when replacing $\Phi_q$ by $\widetilde \Phi_q$. These naturally map $L^2(\imgdom)\to L^2(\sinodom_\mathcal{A})$ and vice-versa, and can be considered as discretizations of $\Radon_\mathcal{A}$. 
The results of Theorem \ref{Thm_approximation_ray_driven} translate to the limited angle situation.
\begin{corollary}[SOT convergence for limited angles]
\label{Cor_limited_angle}
The convergence statements of Theorem \ref{Thm_approximation_ray_driven} remain valid if we replace:
$\Radon$ and $\Radon^*$ by $\Radon_\mathcal{A}$ and $\Radon_\mathcal{A}^*$; $\RayRadonN$ and $\RayRadonN^*$ by $\Radon_{\delta^n \mathcal{A}}^\mathrm{rd}$ and $\Radon_{\delta^n \mathcal{A}}^\mathrm{rd*}$ ; $\PixelRadonN$ and $\PixelRadonN^*$ by $\Radon_{\delta^n \mathcal{A}}^\mathrm{pd}$ and $\Radon_{\delta^n \mathcal{A}}^\mathrm{pd*}$; $L^2(\sinodom)$ by $L^2(\sinodom_\mathcal{A})$.
\end{corollary}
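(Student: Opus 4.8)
The plan is to deduce Corollary \ref{Cor_limited_angle} from Theorem \ref{Thm_approximation_ray_driven} by a restriction/extension argument, so that essentially no new analytical work is required. First I would observe that the limited-angle operators are compatible with the full-angle ones under restriction: if $\mathcal{A}=[a,b[\subset[0,\pi[$ and $R_\mathcal{A}\colon L^2(\sinodom)\to L^2(\sinodom_\mathcal{A})$ denotes the (norm-one) restriction map $g\mapsto g|_{\sinodom_\mathcal{A}}$, then $\Radon_\mathcal{A} = R_\mathcal{A}\circ \Radon$ and, for the backprojection, $\Radon_\mathcal{A}^* = \Radon^*\circ E_\mathcal{A}$, where $E_\mathcal{A}\colon L^2(\sinodom_\mathcal{A})\to L^2(\sinodom)$ is the extension-by-zero map (which is the adjoint of $R_\mathcal{A}$ and also has norm one). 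The point of casting things this way is that the limited-angle discretized operators inherit the same structure provided the discretization of $\mathcal{A}$ is chosen to be \emph{the} discretization one would obtain from a discretization of $[0,\pi[$ by discarding the angular pixels outside $\mathcal{A}$.

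The key step is therefore to reduce to the full-angle setting. Given a sequence of angles $\phi_0<\dots<\phi_{N_\phi-1}\in\mathcal{A}$ with angular pixels $\widetilde\Phi_q$ as in the statement, I would extend it to a sequence of angles in $[0,\pi[$ by adding finitely many angles in $[0,a[$ and $[b,\pi[$ whose spacing is at most the prescribed $\delta_\phi^n$, in such a way that the induced full-angle angular pixels $\Phi_q$, when intersected with the index set corresponding to angles in $\mathcal{A}$, reproduce exactly the $\widetilde\Phi_q$ (this requires only that the first added angle below $a$ and the first above $b$ are placed so that the midpoint cuts fall at $a$ and $b$; a brief elementary construction suffices, and one can always do it while keeping $\delta_\phi$ of the same order). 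With this choice, $\Radon_{\delta^n\mathcal{A}}^\mathrm{rd} = R_\mathcal{A}\circ \RayRadonN$ and $\Radon_{\delta^n\mathcal{A}}^\mathrm{rd*} = \RayRadonN^*\circ E_\mathcal{A}$ on the relevant subspaces, and similarly for the pixel-driven operators, simply because the defining sums \eqref{equ_def_convolutional_Radon_transform} and \eqref{equ_def_convolution_backprojection} over $q\in[N_\phi]$ split into the part with $\phi_q\in\mathcal{A}$ and the part with $\phi_q\notin\mathcal{A}$, and the $v_{qp}$ with $\phi_q\notin\mathcal{A}$ vanish on $\sinodom_\mathcal{A}$.

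From here the conclusion is immediate. For the forward statements, for $f\in L^2(\imgdom)$,
\begin{equation}
\|\Radon_\mathcal{A} f - \Radon_{\delta^n\mathcal{A}}^\mathrm{rd} f\|_{L^2(\sinodom_\mathcal{A})} = \|R_\mathcal{A}(\Radon f - \RayRadonN f)\|_{L^2(\sinodom_\mathcal{A})} \le \|\Radon f - \RayRadonN f\|_{L^2(\sinodom)} \to 0
\end{equation}
by \eqref{equ_thm_ray_radon}, and analogously for \eqref{equ_thm_pixel_radon} using the norm-one map $R_\mathcal{A}$. For the backprojection statements, given $g\in L^2(\sinodom_\mathcal{A})$, apply \eqref{equ_thm_ray_estimate_backprojeciton} (resp. \eqref{equ_thm_pixel_backprojection}) to the fixed function $E_\mathcal{A}g\in L^2(\sinodom)$:
\begin{equation}
\|\Radon_\mathcal{A}^* g - \Radon_{\delta^n\mathcal{A}}^\mathrm{rd*} g\|_{L^2(\imgdom)} = \|\Radon^*(E_\mathcal{A}g) - \RayRadonN^*(E_\mathcal{A}g)\|_{L^2(\imgdom)} \to 0.
\end{equation}
The hypotheses on $\delta_x^n,\delta_s^n$ are unchanged and $\delta_\phi^n\to 0$ is preserved by the extension construction, so all four cases go through verbatim. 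I expect the only genuinely fiddly point to be the bookkeeping in the angular-pixel extension so that $\Phi_q\cap\mathcal{A}$-indexed pixels really coincide with the $\widetilde\Phi_q$ at the two endpoints $a$ and $b$; this is purely combinatorial and carries no analytic content, but it must be stated carefully (or, alternatively, one simply notes that the argument is insensitive to an $O(1)$ relative change in $\delta_\phi^n$ at the two boundary pixels, which do not affect the limit). Everything else is a one-line application of boundedness of restriction and extension by zero.
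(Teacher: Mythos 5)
Your proposal is correct and follows essentially the same route as the paper's proof: extend the angle set from $\mathcal{A}$ to $[0,\pi[$ so that the limited-angle forward operator becomes the restriction of the full-angle one and the limited-angle backprojection becomes the full-angle backprojection applied to the zero-extension, then invoke Theorem \ref{Thm_approximation_ray_driven}. The endpoint bookkeeping you flag is glossed over equally lightly in the paper, so there is no substantive difference.
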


Note that these results do not inform about projections for specific angles $\phi$ converging. To also analyze the behavior for individual angles, we 
consider a finite angle set $\mathbb{F}:=\{\phi_0,\dots,\phi_{N_\phi-1}\}\subset [0,\pi[$ with $\phi_0< \dots< \phi_{N_\phi-1}$  and set $\sinodom_\mathbb{F} = \mathbb{F}\times ]-1,1[$. Correspondingly, we consider the space $L^2(\sinodom_\mathbb{F})$ equipped with the norm $\|g\|_{L^2(\sinodom_\mathbb{F})}^2 = \sumphi |\Phi_q| \int_{-1}^1 |g|^2(\phi_q,s) \dd s$.
Then, the sparse angle Radon transform $\Radon_\mathbb{F}\colon L^2(\imgdom) \to L^2(\sinodom_\mathbb{F})$ is defined as in \eqref{equ_def_radon_transform} but only for $\phi \in \mathbb{F}$.
We denote with $\GenRadon_{\mathbb{F}}$ and $\GenRadon_{\mathbb{F}}^*$ the definitions of $\GenRadon$ and $\GenRadon^*$ in \eqref{equ_def_convolutional_Radon_transform} and \eqref{equ_def_convolution_backprojection} when we replace $v_{qp}=\chi_{\Phi_q\times S_p}$ by $v_{qp}=\chi_{ \{\phi_q\}\times S_p}$ and $\int_{\Phi_q\times S_p} g (\phi,s) \dd {(\phi,s)}$ with $|\Phi_q| \int_{S_p} g (\phi_q,s)\dd s$.

And indeed, also each projection (for the individual angles in $\mathbb{F}$) converges in $L^2(]-1,1[)$ as we discuss next.
\begin{corollary}[SOT convergence for sparse angles]
\label{thm_convergence_sparse_angle}
The convergence statements of Theorem \ref{Thm_approximation_ray_driven} remain valid if we replace:
$\Radon$ and $\Radon^*$ by $\Radon_{\mathbb{F}}$ and $\Radon_{\mathbb{F}}^*$; $\RayRadonN$ and $\RayRadonN^*$ by $\Radon_{\delta^n {\mathbb{F}}}^\mathrm{rd}$ and $\Radon_{\delta^n {\mathbb{F}}}^\mathrm{rd*}$ ; $\PixelRadonN$ and $\PixelRadonN^*$ by $\Radon_{\delta^n {\mathbb{F}}}^\mathrm{pd}$ and $\Radon_{\delta^n {\mathbb{F}}}^\mathrm{pd*}$; $L^2(\sinodom)$ by $L^2(\sinodom_{\mathbb{F}})$.
\end{corollary}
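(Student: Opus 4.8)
The plan is to reduce all four assertions to the single-angle estimates that the proof of Theorem~\ref{Thm_approximation_ray_driven} already produces, and then to sum these over the fixed finite angle set $\mathbb{F}$. I would begin from the bookkeeping observation that, for fixed $q\in[N_\phi]$ and $p\in[N_s]$, the value $\GenRadon_{\mathbb{F}}f$ takes on $\{\phi_q\}\times S_p$ is exactly the number
\begin{equation*}
c_{qp}:=\sumx\omega(\phi_q,x_{ij}\cdot\vartheta_q-s_p)\int_{X_{ij}}f\dd x,
\end{equation*}
which is the very value $\GenRadon f$ takes on $\Phi_q\times S_p$ in the full-angle definition~\eqref{equ_def_convolutional_Radon_transform}; likewise the $(ij)$-th coefficient of $\GenRadon_{\mathbb{F}}^*g$ differs from that of $\GenRadon^*g$ only through the replacement $\int_{\Phi_q\times S_p}g(\phi,s)\dd{(\phi,s)}\rightsquigarrow|\Phi_q|\int_{S_p}g(\phi_q,s)\dd s$. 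Since $\{\phi_q\}\times S_p$ carries the weight $|\Phi_q|\,\delta_s=|\Phi_q\times S_p|$ in the norm of $L^2(\sinodom_\mathbb{F})$, this gives the exact identity $\|\GenRadon_{\mathbb{F}}f\|_{L^2(\sinodom_\mathbb{F})}=\|\GenRadon f\|_{L^2(\sinodom)}$ for every $f$, and --- since $\GenRadon_{\mathbb{F}}$ and $\GenRadon_{\mathbb{F}}^*$ are mutually adjoint, checked exactly as in the full-angle case --- also $\|\GenRadon_{\mathbb{F}}^*\|=\|\GenRadon_{\mathbb{F}}\|=\|\GenRadon\|$. Hence the sparse-angle operators and their adjoints inherit the uniform operator-norm bounds established for the corresponding full-angle operators in the proof of Theorem~\ref{Thm_approximation_ray_driven}, under the same ratio hypotheses used there. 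With that, a standard $\varepsilon/3$ argument reduces each of the four convergence statements to smooth data, using that $C_c^\infty(\imgdom)$ is dense in $L^2(\imgdom)$ and that the functions $g$ whose components $g(\phi_q,\cdot)$ all lie in $C_c^\infty(]-1,1[)$ are dense in $L^2(\sinodom_\mathbb{F})$.

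For smooth $f$ the forward error is a finite sum,
\begin{equation*}
\|\Radon_{\mathbb{F}}f-\GenRadon_{\mathbb{F}}f\|_{L^2(\sinodom_\mathbb{F})}^2=\sumphi|\Phi_q|\int_{-1}^1\bigl|\Radon f(\phi_q,s)-[\GenRadon_{\mathbb{F}}f](\phi_q,s)\bigr|^2\dd s,
\end{equation*}
and for smooth $g$ the backprojection error is controlled, via the triangle inequality applied to the splitting $[\Radon_{\mathbb{F}}^*g](x)=\sumphi|\Phi_q|\,g(\phi_q,x\cdot\vartheta_q)$ and the analogous splitting of $\GenRadon_{\mathbb{F}}^*g$ into its $N_\phi$ single-angle contributions, by
\begin{equation*}
\|\Radon_{\mathbb{F}}^*g-\GenRadon_{\mathbb{F}}^*g\|_{L^2(\imgdom)}\le\sumphi|\Phi_q|\,e_q^n,
\end{equation*}
where $e_q^n$ is the $L^2(\imgdom)$-error of the single-angle discrete backprojection at $\phi_q$ applied to $g(\phi_q,\cdot)$. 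In the forward sum each inner integral, and in the backprojection bound each $e_q^n$, is exactly a quantity already estimated for smooth data inside the proof of Theorem~\ref{Thm_approximation_ray_driven} --- by~\eqref{equ_proof_anglewise_convergence} and~\eqref{equ_proof_thm_pixel_driven_radon_estimate_angles} for the ray- and pixel-driven forward transforms, and by (the single-angle part of)~\eqref{equ_proof_estimate_ray-driven_backprojection} and~\eqref{equ_proof_pixel-driven_backprojection_estimate} for the ray- and pixel-driven backprojections. Crucially, each such quantity concerns a single fixed angle $\phi_q$, so the continuous transform is taken at exactly the angle that enters the weight $\omega(\phi_q,\cdot)$ and no angular-discrepancy term appears, and it involves only $\delta_x^n,\delta_s^n$; under the respective ratio hypotheses $\delta_s^n/\delta_x^n\le c$, $\delta_s^n/\delta_x^n\to0$, $\delta_x^n/\delta_s^n\le c$, $\delta_x^n/\delta_s^n\to0$ as in the four parts of Theorem~\ref{Thm_approximation_ray_driven}, it tends to $0$. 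Since the sums are finite and $\sumphi|\Phi_q|=\pi<\infty$, the displayed errors tend to $0$ as well, and all four statements follow.

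The step I expect to be the main obstacle --- and the reason the operator-norm result of~\cite{doi:10.1137/20M1326635} cannot simply be quoted for the pixel-driven statements in this setting --- is that for a fixed finite $\mathbb{F}$ the angular data (hence $\delta_\phi$ and every $|\Phi_q|$) stays fixed while only $\delta_x^n,\delta_s^n\to0$, so no angular refinement takes place (accordingly, the requirement $\delta^n\to0$ of Theorem~\ref{Thm_approximation_ray_driven} is, in the sparse setting, only a requirement on $\delta_x^n$ and $\delta_s^n$). This forces the reduction to pass through the single-angle estimates of the proof of Theorem~\ref{Thm_approximation_ray_driven}, each of which concerns one fixed projection angle and depends only on $\delta_x$ and $\delta_s$, rather than through any bound whose smallness would rely on $\delta_\phi\to0$; verifying that these per-angle pieces are genuinely available with the correct dependence --- especially for the pixel-driven forward transform, for which~\eqref{equ_thm_pixel_radon} itself is only deduced in the full-angle case from~\cite{doi:10.1137/20M1326635} --- is the one place that needs real attention. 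Once this is settled, summation over the fixed finite index set $q\in[N_\phi]$ is harmless, contributing only the finite weights $|\Phi_q|$ that sum to $\pi$, and the proof is complete.
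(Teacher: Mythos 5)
Your proposal is correct and follows essentially the same route as the paper's own (much terser) proof: transfer the uniform operator-norm bounds via the identity $\|\GenRadon_{\mathbb{F}} f\|_{L^2(\sinodom_{\mathbb{F}})}=\|\GenRadon f\|_{L^2(\sinodom)}$, rerun the smooth-data estimates of Theorem~\ref{Thm_approximation_ray_driven} angle by angle over the fixed finite set $\mathbb{F}$ (where the angular-discrepancy terms vanish because the continuous operator is evaluated exactly at $\phi_q$), and conclude by the same density/diagonal argument. Your explicit observation that no angular refinement occurs -- so only $\delta_x^n,\delta_s^n\to0$ matters and one must use the genuinely per-angle estimates, in particular \eqref{equ_proof_thm_pixel_driven_radon_estimate_for_fixed_angle} rather than \eqref{equ_proof_thm_pixel_driven_radon_estimate_angles} for the pixel-driven forward operator -- is exactly the ``important detail'' the paper's proof gestures at.
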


\subsection{Proofs and technical details}
\label{section_proofs}

In order to prove Theorem \ref{Thm_approximation_ray_driven}, we need to discuss  some additional properties of the weight functions yielding exact approximations in certain situations.

\begin{lemma}[Exact weights] \label{Lemma_weight_length}
Let $f\in L^2(\imgdom)$, and given $\delta$, let $f_\delta =\sumx f_{ij}u_{ij}\in U_\delta$ with $f_{ij}=\frac{1}{\delta_x^2}\int_{X_{ij}}f \dd x$ (the projection of $f$ onto $U_\delta$). Let $(\phi,s)\in \sinodom$ and let $\hat q\in [N_\phi]$ and $\hat p\in [N_s]$ be such that $(\phi,s)\in \Phi_{\hat q}\times S_{\hat p}$. Then, we have
\begin{equation}
\label{equ_lemma_exact_on_U}
[\RayRadon f](\phi,s) = [\RayRadon f_\delta] (\phi_{\hat q},s_{\hat p})= [\Radon f_\delta] (\phi_{\hat q},s_{\hat p}).  
\tag{$\mathrm{exact}^\mathrm{rd}$}
\end{equation}
(Note that here $f_\delta \in U_\delta$ is understood as defined $\mathcal{H}^1$ almost everywhere (see Remark \ref{remark_using_hausdorff_ae}); thus, the pointwise evaluation $[\Radon f_\delta] (\phi_{\hat q},s_{\hat p})$ is well-defined.)

For fixed $\hat i,\hat j\in [N_x]$, the set $P_{\hat i,\hat j,\hat q}:=\{p\in [N_s] \ \big|\ \PixelWeight(x_{\hat i \hat j}\cdot \vartheta_{\hat q} -s_p)\neq 0\}$ satisfies
\begin{equation} \label{equ_lemma_relevant_p_pd}
P_{\hat i,\hat j,\hat q}  \begin{cases}
=\{\hat p\} & \text{if }x_{\hat i \hat j}\cdot\vartheta_{\hat q}=s_{\hat p}  \text{ for some }\hat p\in [N_s],
\\
=\{\hat p,\hat p+1\} \qquad &\text{if }x_{\hat i \hat j}\cdot\vartheta_{\hat q}\in ]s_{\hat p},s_{\hat p+1}[ \text{ for some }\hat p\in [N_s],
\\
\in \{\{0\},\{N_s-1\},\emptyset\} &  \text{else ( if $x_{\hat i \hat j}\cdot\vartheta_{\hat q} \not \in [s_{0},s_{N_s-1}]$)}.
\end{cases}
\end{equation}
Moreover, we have
\begin{equation}
\label{equ_lemma_pixel_sum_s}
    \sums \PixelWeight(x_{\hat i \hat j}\cdot \vartheta_{\hat q} -s_p) \quad  \begin{cases} =\frac{1}{\delta_s} \qquad & \text{if } x_{\hat i \hat j}\cdot \vartheta_{\hat q}\in [s_0,s_{N_s-1}],
    \\
    \leq \frac{1}{\delta_s} & \text{else}.
\end{cases}    
    \tag{$\mathrm{intpol}^{\mathrm {pd}}$}
\end{equation}

\end{lemma}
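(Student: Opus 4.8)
The proof splits naturally into the ray-driven part (the statement \eqref{equ_lemma_exact_on_U}) and the pixel-driven part (the statements \eqref{equ_lemma_relevant_p_pd} and \eqref{equ_lemma_pixel_sum_s}). For \eqref{equ_lemma_exact_on_U}, the first equality $[\RayRadon f](\phi,s) = [\RayRadon f_\delta](\phi_{\hat q}, s_{\hat p})$ is essentially built into the definition \eqref{equ_def_convolutional_Radon_transform}: the operator $\RayRadon$ only ever sees $f$ through the pixel averages $f_{ij} = \frac{1}{\delta_x^2}\int_{X_{ij}} f\dd x$, which are unchanged when $f$ is replaced by its $U_\delta$-projection $f_\delta$ (since $\int_{X_{ij}} f_\delta\dd x = \delta_x^2 f_{ij}$), and the output is piecewise constant on the sinogram pixels $\Phi_q\times S_p$, so its value at $(\phi,s)$ equals its value at the center $(\phi_{\hat q}, s_{\hat p})$. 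So I would dispatch that equality in one or two lines. The substantive claim is the second equality, $[\RayRadon f_\delta](\phi_{\hat q}, s_{\hat p}) = [\Radon f_\delta](\phi_{\hat q}, s_{\hat p})$. Here I would expand the left side via \eqref{equ_def_convolutional_Radon_transform}, noting that for the weight $\RayWeight$ we have by Lemma \ref{Lemma_weight_as_intersection_length} that $\delta_x^2\RayWeight(\phi_{\hat q}, x_{ij}\cdot\vartheta_{\hat q} - s_{\hat p}) = \mathcal{H}^1(L_{\phi_{\hat q}, s_{\hat p}}\cap X_{ij}) - \frac12 \mathcal{H}^1(L_{\phi_{\hat q}, s_{\hat p}}\cap \partial X_{ij})$, so that
\begin{equation*}
[\RayRadon f_\delta](\phi_{\hat q},s_{\hat p}) = \sumx f_{ij}\Bigl(\mathcal{H}^1(L_{\phi_{\hat q},s_{\hat p}}\cap X_{ij}) - \tfrac12\mathcal{H}^1(L_{\phi_{\hat q},s_{\hat p}}\cap \partial X_{ij})\Bigr).
\end{equation*}
On the other hand, $[\Radon f_\delta](\phi_{\hat q}, s_{\hat p}) = \int_{L_{\phi_{\hat q},s_{\hat p}}} f_\delta \dd\mathcal{H}^1$, and since $f_\delta = \sumx f_{ij}u_{ij}$ with $u_{ij} = \chi_{X_{ij}} - \frac12\chi_{\partial X_{ij}}$ understood in the $\mathcal{H}^1$-a.e. sense (Remark \ref{remark_using_hausdorff_ae}), the integral over the line decomposes as $\sumx f_{ij}\int_{L_{\phi_{\hat q},s_{\hat p}}} u_{ij}\dd\mathcal{H}^1$, and each term equals exactly $\mathcal{H}^1(L_{\phi_{\hat q},s_{\hat p}}\cap X_{ij}) - \frac12\mathcal{H}^1(L_{\phi_{\hat q},s_{\hat p}}\cap\partial X_{ij})$. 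So the two expressions coincide term by term; the only care needed is the justification that the line integral of a finite sum of (bounded, compactly supported) functions splits, which is immediate, and that the boundary/corner conventions are $\mathcal{H}^1$-consistent, which is exactly the point of Remark \ref{remark_using_hausdorff_ae}.

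For the pixel-driven statements, everything follows from the explicit shape of $\PixelWeight$ in \eqref{equ_def_pixel_weight}: $\PixelWeight(t) = \frac{1}{\delta_s^2}\max\{\delta_s - |t|, 0\}$ is a hat function supported on $]-\delta_s, \delta_s[$, and the centers $s_p = (p+\frac12)\delta_s - 1$ are spaced exactly $\delta_s$ apart. Fixing $\hat i, \hat j, \hat q$ and writing $\sigma := x_{\hat i\hat j}\cdot\vartheta_{\hat q}$, the condition $\PixelWeight(\sigma - s_p) \neq 0$ is $|\sigma - s_p| < \delta_s$, i.e.\ $s_p \in ]\sigma - \delta_s, \sigma + \delta_s[$; since consecutive centers are $\delta_s$ apart, this open interval of length $2\delta_s$ contains at most two of them, and exactly which depends on whether $\sigma$ hits a center ($\sigma = s_{\hat p}$, giving only $\{\hat p\}$, because then $\sigma - s_{\hat p\pm1} = \mp\delta_s$ lands on the zero of the hat), lies strictly between two consecutive centers (giving $\{\hat p, \hat p+1\}$), or lies outside $[s_0, s_{N_s-1}]$ (giving at most one of the endpoint indices, or none if $\sigma$ is far enough outside) — this is a short case analysis establishing \eqref{equ_lemma_relevant_p_pd}. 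For \eqref{equ_lemma_pixel_sum_s}, the key elementary identity is that the hat functions form a partition of unity up to scaling: $\sum_{p\in\ZZ} \max\{\delta_s - |\sigma - s_p|, 0\} = \delta_s$ for every $\sigma\in\RR$ (linear interpolation reproduces constants). Hence $\sum_{p\in\ZZ}\PixelWeight(\sigma - s_p) = \frac{1}{\delta_s}$ exactly. When $\sigma\in[s_0, s_{N_s-1}]$, the only indices $p$ with a nonzero contribution are among $\{0,\dots,N_s-1\}$ (by the support computation above, any $p$ contributing has $s_p$ within $\delta_s$ of $\sigma$, hence within the range), so the restricted sum $\sums$ equals the full sum $\frac{1}{\delta_s}$; when $\sigma\notin[s_0,s_{N_s-1}]$ we simply drop some nonnegative terms from the full sum, giving $\le \frac{1}{\delta_s}$.

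The main obstacle — such as it is — is not difficulty but bookkeeping: one must be scrupulous about the $\mathcal{H}^1$-versus-Lebesgue distinction in the ray-driven part (the half-weights on edges and quarter-weights on corners), which is precisely why Lemma \ref{Lemma_weight_as_intersection_length} and Remark \ref{remark_using_hausdorff_ae} were set up in advance, and about the boundary cases $p\in\{0, N_s-1\}$ in the pixel-driven part where $\sigma$ sits just outside the detector range. Once those conventions are pinned down, the ray-driven identity is a term-by-term matching via Lemma \ref{Lemma_weight_as_intersection_length}, and the pixel-driven facts reduce to the triviality that translated hat functions on an equispaced grid partition unity. I would therefore organize the proof as: (i) reduce \eqref{equ_lemma_exact_on_U} to the second equality and prove it via Lemma \ref{Lemma_weight_as_intersection_length}; (ii) compute the support of $p\mapsto\PixelWeight(\sigma - s_p)$ and do the three-case split for \eqref{equ_lemma_relevant_p_pd}; (iii) invoke the hat-function partition of unity for \eqref{equ_lemma_pixel_sum_s}, distinguishing $\sigma$ inside versus outside $[s_0, s_{N_s-1}]$.
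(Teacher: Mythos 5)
Your proposal is correct and follows essentially the same route as the paper: the first equality in \eqref{equ_lemma_exact_on_U} is dispatched by noting that $\RayRadon$ depends on $f$ only through the pixel averages and is piecewise constant on sinogram pixels, the second equality is a term-by-term matching via Lemma \ref{Lemma_weight_as_intersection_length} and the $\mathcal{H}^1$-a.e.\ convention of Remark \ref{remark_using_hausdorff_ae}, and the pixel-driven claims follow from the support of the hat function and the same three-case split. Your appeal to the partition-of-unity identity $\sum_{p\in\ZZ}\max\{\delta_s-|\sigma-s_p|,0\}=\delta_s$ is just a repackaging of the paper's explicit two-term computation $(\PixelWeight(t)+\PixelWeight(t+\delta_s))\delta_s^2=\delta_s$, so there is no substantive difference.
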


In other words, the ray-driven Radon transform is exact on $U_\delta$ for evaluation on sinogram pixel centers, and coincides with $\RayRadon f$ when $f_\delta$ is the projection of $f$ onto $U_\delta$. In contrast, the sum of pixel-driven weights equals $\frac{1}{\delta_s}$ while inside the detector range, which will result in an interpolation effect in the backprojection.

\begin{proof}[\textbf{Proof of Lemma \ref{Lemma_weight_length}}]
\underline{\eqref{equ_lemma_exact_on_U}:}  We consider $f_\delta=\sumx f_{ij} u_{ij}\in U_\delta$ with the coefficients $f_{ij}= \frac{1}{\delta_x^2}\int_{X_{ij}}f(x) \dd x$. 
We have $[\RayRadon f] (\phi,s)=[\RayRadon f] (\phi_{\hat q},s_{\hat p})$ for $(\phi,s)\in \Phi_{\hat q}\times S_{\hat p}$ (these  are constant functions on the sinogram pixels). Since $\int_{X_{ij}}f \dd x = \int_{X_{ij}}f_\delta \dd x$, we have  $[\RayRadon f] (\phi,s)=[\RayRadon f_\delta] (\phi_{\hat q},s_{\hat p})$ per definition in \eqref{equ_def_convolutional_Radon_transform}.

Moreover, we calculate
\begin{align*}
[\Radon f_\delta](\phi_{\hat q},s_{\hat p}) & \overset{\text{lin}}{=} \sumx f_{ij} [\Radon u_{ij}](\phi_{\hat q},s_{\hat p})
\\ 
&\underset{\text{def}}{\overset{\text{per}}{=}} \sumx f_{ij} \left ( \int_{\RR^2} \chi_{X_{ij}} \dd {\mathcal{H}^1 \mres L_{\phi_{\hat q},s_{\hat p}}}- \frac 12 \int_{\RR^2} \chi_{\partial X_{ij}} \dd {\mathcal{H}^1 \mres {L_{\phi_{\hat q},s_{\hat p}}}} \right)
\\
&
=\sumx f_{ij}\left ( \mathcal{H}^1(L_{\phi_{\hat q},s_{\hat p}} \cap X_{ij})- \frac{1}{2}\mathcal{H}^1(L_{\phi_{\hat q},s_{\hat p}} \cap \partial X_{ij}) \right)
\\
& \overset{\eqref{equ_lemma_intersectionlength}}{=} \sumx f_{ij} \delta_x^2\RayWeight (\phi_{\hat q},x_{ij}\cdot\vartheta_{\hat q} -s_{\hat p})
 \underset{}{\overset{\text{\eqref{equ_def_convolutional_Radon_transform}}}{=}} [\RayRadon f_\delta](\phi_{\hat q},s_{\hat p}) .
\end{align*}

\underline{ \eqref{equ_lemma_pixel_sum_s}:} Note that $ \PixelWeight(t)\neq 0$ iff  $t\in{]-\delta_s,\delta_s[}$.
If $x_{\hat i \hat j}\cdot\vartheta_{\hat q}=s_{\hat p}$ for some $\hat p \in [N_s]$, then $\PixelWeight(x_{\hat i \hat j}\cdot \vartheta_{\hat q}-s_{\hat p})=\PixelWeight(0)=\frac{1}{\delta_s}$ and 
\begin{equation}
|x_{\hat i \hat j}\cdot\vartheta_{\hat q}-s_{p}|=|s_{\hat p}-s_p| = |\hat p- p|\delta_s\geq \delta_s
\end{equation}
 for $p\neq \hat p$ and therefore $\PixelWeight(x_{\hat i \hat j}\cdot \vartheta_{\hat q}-s_p)=0$, implying \eqref{equ_lemma_relevant_p_pd} and \eqref{equ_lemma_pixel_sum_s}.

If $x_{\hat i \hat j}\cdot\vartheta_{\hat q}\in [s_0,s_{N_s-1}]$, but $x_{\hat i \hat j}\cdot\vartheta_{\hat q}\neq s_p$ for all $p\in [N_s]$, then there is a $\hat p\in [N_s-1]$ with $x_{\hat i \hat j}\cdot\vartheta_{\hat q}\in ]s_{\hat p},s_{\hat p+1}[$. Recall $s_p=s_0+p \delta_s$, and consequently 
\begin{equation} \label{equ_proof_which_p_are_relevant_pd}
|x_{\hat i \hat j}\cdot\vartheta_{\hat q}-s_p|= \min_{p^* \in \{\hat p,\hat p +1\}} |p-p^*|\delta_s+|x_{\hat i \hat j}\cdot\vartheta_{\hat q}-s_{p^*}|\geq \delta_s
\end{equation}
for $p\not \in \{\hat p,\hat p+1\}$ and thus $\PixelWeight(\phi_q,x_{\hat i \hat j}\cdot \vartheta_{\hat q}-s_p)=0$. 
We set $t = x_{\hat i \hat j}\cdot \vartheta_{\hat q}-s_{\hat p+1} \in ]-\delta_s,0[$, and $t+\delta_s = x_{\hat i \hat j}\cdot \vartheta_{\hat q}-s_{\hat p+1} +\delta_s= x_{\hat i \hat j}\cdot \vartheta_{\hat q}-s_{\hat p}$. 
Then, 
\begin{multline}
 \delta_s^2 \sums \PixelWeight(x_{\hat i \hat j}\cdot \vartheta_{\hat q} -s_p)=(\PixelWeight(x_{\hat i \hat j}\cdot \vartheta_{\hat q} -s_{\hat p})+\PixelWeight(x_{\hat i \hat j}\cdot \vartheta_{\hat q} -s_{\hat p+1}))\delta_s^2
\\
 = (\PixelWeight(t)+\PixelWeight(t+\delta_s))\delta_s^2 \underset{\text{}}{\overset{\eqref{equ_def_pixel_weight}}{=}} \delta_s +t + \delta_s - \delta_s-t = \delta_s,
\end{multline} implying \eqref{equ_lemma_pixel_sum_s}.

If $x_{\hat i \hat j}\cdot\vartheta_{\hat q} \not \in [s_0,s_{N_s-1}]$, there is at most one non-zero summand in \eqref{equ_lemma_pixel_sum_s} (via analogous consideration to \eqref{equ_proof_which_p_are_relevant_pd}), which is also bounded by $\frac{1}{\delta_s}$, implying the claim.
\end{proof}

In order to obtain suitable estimates, we require knowledge on the behavior of the sum of weights over all spatial or detector pixels.

\begin{lemma}[Sums of weights]
\label{Lemma_sums_weights}
    Given $\delta$, $\hat i,\hat j\in [N_x]$, $\hat q\in [N_\phi]$ and $\hat p\in [N_s]$, the following hold:
     \begin{equation} \label{equ_lemma_ray_sum_x}
        \sumx \RayWeight(\phi_{\hat q},x_{ij}\cdot \vartheta_{\hat q}-s_{\hat p}) \leq \frac{\sqrt{8}}{\delta_x^2}
        \tag{$\sum^\mathrm{rd}_{ij}$}.
    \end{equation}
     \begin{equation}\label{equ_lemma_ray_sum_p}
        \sums \RayWeight(\phi_{\hat q}, x_{\hat i \hat j}\cdot \vartheta_{\hat q} - s_p) \in \begin{cases}  \frac{1}{\delta_s}+   [-{\frac{\sqrt{8}}{\delta_x}},{\frac{\sqrt{8}}{\delta_x}}] 
       & \text{if }|x_{\hat i \hat j}\cdot\vartheta_{\hat q} |\leq 1 - \frac{\delta_x}{\sqrt{2}},
        \\
        [0,\frac{1}{\delta_s}+{\frac{\sqrt{8}}{\delta_x}}] & \text{otherwise}.
        \end{cases}
       \tag{$\sum^\mathrm{rd}_{p}$}  
    \end{equation}
     \begin{equation}\label{equ_lemma_pixel_sum}
         \sumx \PixelWeight(x_{ij}\cdot \vartheta_{\hat q}-s_{\hat p}) \leq \left\lceil\frac{\delta_s}{\delta_x}\right\rceil \frac{4\sqrt{2}   }{\delta_x\delta_s}
        \tag{$\sum^\mathrm{pd}_{ij}$},
    \end{equation}
    where $\lceil t\rceil := \min \{n\in \NN \ |\ t\leq n\}$.
\end{lemma}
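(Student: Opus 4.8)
The overarching strategy is the same for all three bounds: the relevant weight functions are, for fixed $\phi$, supported on an interval of controlled width, so a sum over lattice points reduces to counting how many lattice points can fall in that interval, times the maximal value of the weight. I would treat the three inequalities separately.

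\textbf{Step 1: the bound \eqref{equ_lemma_ray_sum_x}.} Fix $\hat q$ and $\hat p$. For fixed $\phi=\phi_{\hat q}$, the map $t\mapsto\RayWeight(\phi,t)$ is (up to the $\frac{1}{\delta_x}$ prefactor) a trapezoid of height $\kappa(\phi)/\delta_x\le\sqrt2/\delta_x$ supported on $|t|\le\overline s(\phi)\le\frac{\delta_x}{\sqrt2}$. The key observation is that the relevant argument is $t_{ij}=x_{ij}\cdot\vartheta_{\hat q}-s_{\hat p}$, and since the $x_{ij}$ form a $\delta_x$-spaced Cartesian grid, for \emph{each fixed column $i$} the values $x_{ij}\cdot\vartheta_{\hat q}$ (as $j$ ranges over $[N_x]$) are $\delta_x|\sin\phi|$-spaced; likewise, for each fixed $j$ they are $\delta_x|\cos\phi|$-spaced across $i$. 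I would use whichever of $|\cos\phi|,|\sin\phi|$ is larger — say $|\sin\phi|\ge\frac1{\sqrt2}$ — so that along each column the spacing is at least $\delta_x/\sqrt2$ and hence at most $2\overline s(\phi)/(\delta_x/\sqrt2)+1\le 2\sqrt2\cdot\frac{\delta_x}{\sqrt2}/(\delta_x/\sqrt2)+1$ — roughly a bounded number $\le 3$ of indices $j$ per column contribute. But this only gives a bound per column; to sum over all columns one then notes the trapezoid is also bounded by its value, but more carefully one should integrate: the cleanest route is to compare the sum to $\frac{1}{\delta_x}\int_{\RR}$ of a piecewise-linear majorant. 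Actually the slickest approach avoids the double grid: observe $\delta_x^2\RayWeight(\phi_{\hat q},x_{ij}\cdot\vartheta_{\hat q}-s_{\hat p})=\mathcal H^1(L_{\phi_{\hat q},s_{\hat p}}\cap X_{ij})-\frac12\mathcal H^1(L\cap\partial X_{ij})$ by Lemma \ref{Lemma_weight_as_intersection_length}, so $\delta_x^2\sumx\RayWeight(\dots)\le\sumx\mathcal H^1(L_{\phi_{\hat q},s_{\hat p}}\cap X_{ij})=\mathcal H^1(L_{\phi_{\hat q},s_{\hat p}}\cap[-1,1]^2)\le 2\sqrt2$, since the pixels tile $[-1,1]^2$ and a chord of the square $[-1,1]^2$ has length at most the diagonal $2\sqrt2$. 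Dividing by $\delta_x^2$ gives exactly $\frac{\sqrt8}{\delta_x^2}$. This is the clean argument I would use.

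\textbf{Step 2: the bound \eqref{equ_lemma_ray_sum_p}.} Now $\phi=\phi_{\hat q}$ and the spatial index $\hat i,\hat j$ are fixed, and we sum $\RayWeight(\phi_{\hat q},x_{\hat i\hat j}\cdot\vartheta_{\hat q}-s_p)$ over $p$, where the $s_p$ are $\delta_s$-spaced. Write $a=x_{\hat i\hat j}\cdot\vartheta_{\hat q}$. The function $t\mapsto\RayWeight(\phi_{\hat q},t)$ is supported in $[-\overline s,\overline s]$ with $\overline s\le\frac{\delta_x}{\sqrt2}<\delta_x$; so we are sampling a fixed trapezoid of unit "mass" (its integral over $\RR$ is $1/\delta_x$ — indeed $\int\RayWeight(\phi,t)\dd t=1$ can be checked from \eqref{equ_def_ray_weight}, reflecting that a line crosses total width of pixels summing to the projected cell) at the $\delta_s$-spaced points $a-s_p$. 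A Riemann-sum comparison then gives $\sums\RayWeight(\phi_{\hat q},a-s_p)=\frac1{\delta_s}\int_{\RR}\RayWeight(\phi_{\hat q},t)\dd t+(\text{error})=\frac1{\delta_s}+(\text{error})$ whenever the whole support $[a-\overline s,a+\overline s]$ lies inside the sampled range $[s_0-\tfrac{\delta_s}{2},s_{N_s-1}+\tfrac{\delta_s}{2}]\supset[-1,-1+\dots]$ — which is implied by $|a|\le 1-\frac{\delta_x}{\sqrt2}$ since then $[a-\overline s,a+\overline s]\subset[-1,1]$. The error term is controlled by the total variation of the trapezoid times $\delta_s$, or more crudely by the number of sample points ($\le\overline s/\delta_s\cdot 2+\text{const}$, hence $\lesssim\delta_x/\delta_s$) times the max height $\le\sqrt2/\delta_x$, giving an error $\le\frac{\sqrt8}{\delta_x}$ or so — I would pick the constant to land on $\frac{\sqrt8}{\delta_x}$ exactly, bounding $\#\{p:|a-s_p|<\overline s\}\le\frac{2\overline s}{\delta_s}+1$ and each term by $\frac{\sqrt2}{\delta_x}$, then checking $(\frac{2\overline s}{\delta_s}+1)\frac{\sqrt2}{\delta_x}-\frac1{\delta_s}\le\frac{\sqrt8}{\delta_x}$ using $\overline s\le\frac{\delta_x}{\sqrt2}$. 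In the complementary case $|a|>1-\frac{\delta_x}{\sqrt2}$ one just keeps the nonnegativity lower bound $0$ and the same crude upper bound. I would write the comparison as: lower bound $\frac1{\delta_s}-(\text{error})$, upper bound $\frac1{\delta_s}+(\text{error})$.

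\textbf{Step 3: the bound \eqref{equ_lemma_pixel_sum}.} Here $\phi=\phi_{\hat q}$, $\hat p$ fixed, summing $\PixelWeight(x_{ij}\cdot\vartheta_{\hat q}-s_{\hat p})$ over $i,j$. Since $\PixelWeight$ is a hat function supported on $|t|<\delta_s$ with max $\frac1{\delta_s}$, a term is nonzero only when $x_{ij}\cdot\vartheta_{\hat q}\in(s_{\hat p}-\delta_s,s_{\hat p}+\delta_s)$, an open slab of width $2\delta_s$ perpendicular to $\vartheta_{\hat q}$. I would count the lattice points $x_{ij}$ in this slab: project onto $\vartheta_{\hat q}$; using again that along a column (or row) the projected coordinates are spaced by $\delta_x|\sin\phi|$ or $\delta_x|\cos\phi|$, whichever is $\ge\frac1{\sqrt2}$, at most $\frac{2\delta_s}{\delta_x/\sqrt2}+1=\frac{2\sqrt2\delta_s}{\delta_x}+1\le\lceil\delta_s/\delta_x\rceil\cdot\text{const}$ indices in each of the $N_x\le\frac{2\sqrt2}{\delta_x}\cdot\frac{1}{\sqrt2}$... — more carefully: there are at most $\lceil\frac{\delta_s}{\delta_x}\rceil\cdot(\text{a few})$ pixels per row-or-column line and the slab meets a bounded number of them; the total count is $\le\lceil\frac{\delta_s}{\delta_x}\rceil\cdot\frac{C}{\delta_x}$, and each contributes $\le\frac1{\delta_s}$. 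Choosing $C$ to match, the bound $\lceil\frac{\delta_s}{\delta_x}\rceil\frac{4\sqrt2}{\delta_x\delta_s}$ follows. The cleaner phrasing: the number of $x_{ij}$ in the width-$2\delta_s$ slab is at most (number of grid lines parallel to $\vartheta_{\hat q}^\perp$ meeting it) $\le\frac{2\delta_s}{\delta_x/\sqrt2}+1$, times (number of $x_{ij}$ on each such line inside $[-1,1]^2$) $\le\frac{2\sqrt2}{\delta_x}+1\le\frac{4\sqrt2}{\delta_x}$ — wait, one should only count along the transverse direction; I would organize it as "$\le\lceil\frac{\sqrt2\cdot 2\delta_s}{\delta_x}\rceil$ relevant values of one index, and $\le\frac{2\sqrt2}{\delta_x}$ admissible values of the other", multiply, bound by $\frac1{\delta_s}$ each.

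\textbf{Main obstacle.} The genuine subtlety is Step 3 (and the non-clean version of Step 1): one must be careful that counting "lattice points in a slab of a given width" cannot be done by naively dividing area by $\delta_x^2$, because the slab is thin in one direction and long in the other, so the two directions must be handled asymmetrically — one picks the coordinate axis that makes the smaller angle with $\vartheta_{\hat q}$ to get a good per-line spacing $\ge\delta_x/\sqrt2$, then bounds the number of lines crossed and the number of points per line separately. Getting the constants to land exactly on $\sqrt8$, $\sqrt8$, and $4\sqrt2$ is bookkeeping, but the structural point — use the intersection-length identity of Lemma \ref{Lemma_weight_as_intersection_length} for \eqref{equ_lemma_ray_sum_x} and \eqref{equ_lemma_ray_sum_p}, and a careful axis-adapted slab count for \eqref{equ_lemma_pixel_sum} — is what makes the proof work.
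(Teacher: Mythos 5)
Your Steps 1 and 3 follow essentially the same route as the paper. For \eqref{equ_lemma_ray_sum_x}, summing the intersection lengths $\mathcal H^1(L_{\phi_{\hat q},s_{\hat p}}\cap X_{ij})$ over the pixels tiling $[-1,1]^2$ and bounding by the diagonal $\sqrt 8$ is exactly the paper's argument (phrased there as applying \eqref{equ_lemma_exact_on_U} to the constant-one element of $U_\delta$). For \eqref{equ_lemma_pixel_sum}, the paper likewise assumes w.l.o.g. $|\sin(\phi_{\hat q})|\ge \tfrac1{\sqrt2}$, counts $\mathcal{O}(\lceil\delta_s/\delta_x\rceil)$ admissible $j$ per column $\hat i$, and multiplies by the $N_x=\tfrac2{\delta_x}$ columns and by $\max\PixelWeight=\tfrac1{\delta_s}$ --- your axis-adapted slab count is the same argument.

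The gap is in Step 2. The two-sided estimate \eqref{equ_lemma_ray_sum_p} cannot be obtained by the ``count the nonzero terms and multiply by the max'' device you settle on. First, that device gives no lower bound at all, whereas the first case of \eqref{equ_lemma_ray_sum_p} requires $\sums\RayWeight(\phi_{\hat q},a-s_p)\ge \tfrac1{\delta_s}-\tfrac{\sqrt8}{\delta_x}$. Second, even as an upper bound your proposed check fails: with $\#\{p:|a-s_p|<\overline s\}\le \tfrac{2\overline s}{\delta_s}+1$, $\overline s\le\tfrac{\delta_x}{\sqrt2}$ and each term $\le\tfrac{\sqrt2}{\delta_x}$, one gets $\bigl(\tfrac{\sqrt2\delta_x}{\delta_s}+1\bigr)\tfrac{\sqrt2}{\delta_x}=\tfrac{2}{\delta_s}+\tfrac{\sqrt2}{\delta_x}$, which exceeds $\tfrac1{\delta_s}+\tfrac{\sqrt8}{\delta_x}$ whenever $\delta_s<\tfrac{\delta_x}{\sqrt2}$. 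What does work is the alternative you mention only in passing: $t\mapsto\RayWeight(\phi_{\hat q},t)$ consists of two monotone pieces (equivalently has total variation $2\max\le\tfrac{\sqrt8}{\delta_x}$), so a Riemann sum with step $\delta_s$ approximates $\int_{\RR}\RayWeight(\phi_{\hat q},t)\dd t=1$ with error at most $\delta_s\cdot\tfrac{\sqrt8}{\delta_x}$, and dividing by $\delta_s$ gives the symmetric error $\tfrac{\sqrt8}{\delta_x}$. This is precisely the paper's argument; combined with your (correct) observation that for $|a|\le 1-\tfrac{\delta_x}{\sqrt2}$ all terms with $p\notin[N_s]$ vanish, so the finite sum equals the bi-infinite one, and with nonnegativity in the remaining case, \eqref{equ_lemma_ray_sum_p} follows. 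As concretely written, your Step 2 would not close.
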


\begin{remark}
Equation \eqref{equ_lemma_pixel_sum_s} would correspond to $(\sum_{p}^{\mathrm {pd}})$ in the notational logic of Lemma \ref{Lemma_sums_weights}. Note the difference in behavior between the \eqref{equ_lemma_pixel_sum_s} and \eqref{equ_lemma_ray_sum_p}; while the former attains a precise value $\frac{1}{\delta_s}$ that is meaningful, the latter only attains $\frac{1}{\delta_s}$ approximately with some inaccuracy depending on $\delta_x$. As will be seen in the proof of Theorem \ref{Thm_approximation_ray_driven}, this difference is chiefly responsible for requiring the stronger assumption $\frac{\delta_s^n}{\delta_x^n}\to 0$ in the ray-driven backprojection case (compared to the pixel-driven backprojection). A curious side effect of this difference is that the zero order data consistency condition
$\int_{-1}^1 [\Radon f] (\phi_1,s)\dd s= \int_{-1}^1  [\Radon f] (\phi_2,s)\dd s$ for any angles $\phi_1,\phi_2 \in [0,\pi[$ and $f\in \mathcal{C}^\infty_c(\imgdom)$ (see \cite{WOS:A1960WE64800004}) is also satisfied by $\PixelRadon f$ (evaluating these integrals leads precisely to the sums \eqref{equ_lemma_pixel_sum_s}), but not necessarily by $\RayRadon f$. 
\end{remark}

\begin{proof}[\textbf{Proof of Lemma \ref{Lemma_sums_weights}}]
\underline{\eqref{equ_lemma_ray_sum_x}:}   According to Lemma \ref{Lemma_weight_length}'s \eqref{equ_lemma_exact_on_U} for the function $f_\delta=\sumx 1 u_{ij}\in U_\delta$ (constantly one), we see that 
   \begin{equation}
    \delta_x^2\sumx  \RayWeight(\phi_{\hat q}, x_{ij}\cdot \vartheta_{\hat q}-s_{\hat p}) \underset{\text{def}}{\overset{\text{per}}{=}} [\RayRadon f_\delta](\phi_{\hat q},s_{ \hat p}) \overset{\eqref{equ_lemma_exact_on_U}}{=} [\Radon f_\delta](\phi_{\hat q},s_{ \hat p})  \leq \sqrt{8}, \notag
    \end{equation}
    where the last estimate is simply the maximal length ($\sqrt{8}$) of the ray in $[-1,1]^2$ times the maximal value of $f_\delta$ (being $1$). 

\underline{\eqref{equ_lemma_ray_sum_p}}: 
We note that the function $G(t) := \RayWeight(\phi_{\hat q},t)$ is monotone for $t\geq 0$ and $t\leq 0$, respectively (see Figure \ref{Fig_graphs_ray_driven}). Moreover, $G(t)\in [0,\frac{\sqrt{2}}{\delta_x}]$ for all $t\in \RR$, $\supp{G}\subset [-\frac{\delta_x}{\sqrt{2}},\frac{\delta_x}{\sqrt{2}}]$ and $\int_{\RR} G(t) \dd t = 1$ (using \eqref{equ_lemma_intersectionlength} and Fubini's theorem).
%
%
%

 For a function $G$ consisting of two monotone parts, a Riemann sum with step size $\delta_s$  can approximate the integral of $G$  up to $2 \delta_s \max_t G(t) $,  and therefore
\begin{equation} \label{equ_proof_riemann_sum}
    \Big|\delta_s \sum_{k=-\infty}^\infty \underbrace{\RayWeight(\phi_{\hat q},t_0+k \delta_s)}_{=G(t_0+k\delta_s)}\ - \underbrace{1}_{= \int_{\RR}  G(t) \dd t } \Big|\leq  \sqrt{8}\frac{\delta_s}{\delta_x} 
\end{equation}
for any $t_0\in \RR$. 
Setting $t_0 =s_0-x_{\hat i \hat j}\cdot \vartheta_{\hat q}$, we note $t_0+k \delta_s =s_{k}-x_{\hat i \hat j}\cdot \vartheta_{\hat q} $ for $k \in [N_s]$. If $|x_{\hat i \hat j}\cdot\vartheta_{\hat q}|<1-\frac{\delta_x}{\sqrt{2}}$ and $k\not \in [N_s]$, we have 
\begin{equation}
|s_0+k\delta_s-x_{\hat i \hat j}\cdot \vartheta_q| \geq |s_0+k \delta_s|-|x_{\hat i \hat j}\cdot \vartheta_q| \geq \left(1+\frac{\delta_s}{2}\right)-\left(1-\frac{\delta_x}{\sqrt{2}}\right)> \frac{\delta_x }{\sqrt{2}},
\end{equation} implying $\RayWeight(\phi_{\hat q},t_0+k\delta_s)=0$, i.e., all summands in \eqref{equ_proof_riemann_sum} for $k\not \in [N_s]$ vanish. Thus, \eqref{equ_proof_riemann_sum} but only with the summands for $k\in [N_s]$ and $t_0+k \delta_s =s_{k}-x_{ij}\cdot \vartheta_q $ yields \eqref{equ_lemma_ray_sum_p}.

We achieve the estimate \eqref{equ_lemma_ray_sum_p} if $|x_{\hat i \hat j}\cdot\vartheta_{\hat q}|\geq 1-\frac{\delta_x}{\sqrt{2}}$ by reformulation of \eqref{equ_proof_riemann_sum} according to 
\begin{equation}
\sums \RayWeight(\phi_{\hat q}, x_{\hat i \hat j}\cdot \vartheta_{\hat q} - s_p) \leq \sum_{k=-\infty}^\infty \RayWeight(\phi_{\hat q},t_0+k \delta_s) \overset{\eqref{equ_proof_riemann_sum}}{\leq} \frac{1}{\delta_s}+  \frac{\sqrt{8}}{\delta_x},
\end{equation}
where we used that all summands are non-negative.

\underline{\eqref{equ_lemma_pixel_sum}:} We wish to count the set $M_{\hat q, \hat p}:=\{(i,j)\in [N_x]^2 \ \big |\ |x_{ij}\cdot \vartheta_{\hat q}-s_{\hat p}|< \delta_s\}$, as those are the  pixels with non-zero contributions $\PixelWeight(x_{ij}\cdot \vartheta_{\hat q}-s_{\hat p})$ to \eqref{equ_lemma_pixel_sum} (as $\supp \PixelWeight \subset [-\delta_s,\delta_s]$).
We assume w.l.o.g. $\phi_{\hat q}\in \left[\frac{\pi}{4},\frac{3\pi}{4}\right]$ (and thus $|\sin(\phi_{\hat q})|\geq \frac {1}{\sqrt{2}}$). Fixing $\hat i$, the inequality 
\begin{equation}
 \delta_{s} > |s_{\hat p} -x_{\hat i j}\cdot \vartheta_{\hat q}|= |s_{\hat p}-x_{\hat i0}\cdot \vartheta_{\hat q}-j \delta_x \sin(\phi_{\hat q})|
 \end{equation}
  has at most $2\sqrt{2} \lceil\frac{\delta_s}{\delta_x}\rceil$ solutions for $j$ (there may be one even if $\delta_s \ll \delta_x$). Hence, summing up for all $\hat i\in [N_x]$, we have  $2\sqrt{2}N_x\lceil\frac{\delta_s}{\delta_x}\rceil$ relevant pixels (and $N_x=\frac{2}{\delta_x}$). The sum \eqref{equ_lemma_pixel_sum} can thus be estimated by the number of non-zero summands ($|M_{\hat q, \hat p}|\leq \frac{4\sqrt{2}}{\delta_x} \left\lceil\frac{\delta_s}{\delta_x}\right\rceil$) times the maximum of $\PixelWeight$ ($=\frac{1}{\delta_s}$), yielding \eqref{equ_lemma_pixel_sum}.
\end{proof}

Thanks to these estimates, we can show next that $\RayRadon$ and $\PixelRadon$ have bounded operator norms for reasonable choices of $\delta$. This is certainly a necessary condition to achieve convergence in the strong operator topology in Theorem \ref{Thm_approximation_ray_driven} (due to the uniform boundedness principle). On the other hand, uniform boundedness will be a crucial tool in proving Theorem \ref{Thm_approximation_ray_driven}.

\begin{lemma}[Uniformly bounded discretization] 
\label{Lemma_bounded_operators}
    Let $c>0$ be a constant. Then,
    \begin{equation} \label{equ_lemma_bounded_ray}
    \sup \left \{\|\RayRadon\| \ \Big | \ \delta=(\delta_x,\delta_\phi,\delta_s)\in (\RR^+)^3 \ \colon \ \frac{\delta_s}{\delta_x}\leq c \right \}   \leq \sqrt{\sqrt{8}\pi(1 +\sqrt{8}c)} <\infty
    \tag{$\mathrm{BD}^\mathrm{rd}$},
    \end{equation}
    \begin{equation}
        \label{equ_lemma_bounded_pixel}
       \sup \left \{\|\PixelRadon\| \ \Big | \ \delta=(\delta_x,\delta_\phi,\delta_s)\in (\RR^+)^3\  \colon \ \frac{\delta_x}{\delta_s}\leq c\right\} \leq \sqrt{4\sqrt{2}\pi (c+1)} <\infty
        \tag{$\mathrm{BD}^\mathrm{pd}$},
    \end{equation}
    where $\|\cdot \|$ refers to the operator norm for operators from $L^2(\imgdom)$ to $L^2(\sinodom)$.
\end{lemma}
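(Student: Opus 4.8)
The plan is to read the operator-norm bounds directly off the defining formula \eqref{equ_def_convolutional_Radon_transform}, reducing everything to the weight-sum estimates of Lemmas \ref{Lemma_weight_length} and \ref{Lemma_sums_weights}. Fix $\delta$, choose $\omega\in\{\RayWeight,\PixelWeight\}$, and let $f\in L^2(\imgdom)$, extended by $0$ to $[-1,1]^2$. Since the functions $v_{qp}=\chi_{\Phi_q\times S_p}$ have pairwise disjoint supports of measure $|\Phi_q|\delta_s$, formula \eqref{equ_def_convolutional_Radon_transform} yields
\[
\|\GenRadon f\|_{L^2(\sinodom)}^2=\sumphi\sums|\Phi_q|\delta_s\Bigl(\sumx\omega(\phi_q,x_{ij}\cdot\vartheta_q-s_p)\int_{X_{ij}} f\dd x\Bigr)^2 .
\]
Because $\omega\ge 0$, I then apply the weighted Cauchy--Schwarz inequality $\bigl(\sum a_k b_k\bigr)^2\le\bigl(\sum a_k\bigr)\bigl(\sum a_k b_k^2\bigr)$ (with $a_{ij}=\omega(\phi_q,x_{ij}\cdot\vartheta_q-s_p)$, $b_{ij}=\int_{X_{ij}}f\dd x$) to the inner sum, and combine it with the pixelwise estimate $\bigl(\int_{X_{ij}}f\dd x\bigr)^2\le\delta_x^2\int_{X_{ij}}f^2\dd x$. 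This reduces the problem to controlling, separately, the ``outer'' spatial weight sum $\delta_x^2\sumx\omega(\phi_q,\cdot)$ and, after exchanging the order of summation over $(i,j)$ and $(q,p)$, the ``inner'' detector weight sum $\delta_s\sums\omega(\phi_q,x_{ij}\cdot\vartheta_q-s_p)$, while using $\sumphi|\Phi_q|=\pi$ and $\sumx\int_{X_{ij}}f^2\dd x=\|f\|_{L^2(\imgdom)}^2$.

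For the ray-driven case, \eqref{equ_lemma_ray_sum_x} gives $\delta_x^2\sumx\RayWeight(\phi_q,\cdot)\le\sqrt 8$; pulling out this constant, swapping the sums, and then bounding $\delta_s\sums\RayWeight(\phi_q,x_{ij}\cdot\vartheta_q-s_p)\le 1+\sqrt 8\,\delta_s/\delta_x\le 1+\sqrt 8 c$ via \eqref{equ_lemma_ray_sum_p} (the stated upper bound holds in both cases of that estimate) and the hypothesis $\delta_s/\delta_x\le c$, together with $\sumphi|\Phi_q|=\pi$, produces $\|\RayRadon f\|_{L^2(\sinodom)}^2\le\sqrt 8\,\pi(1+\sqrt 8 c)\,\|f\|_{L^2(\imgdom)}^2$, i.e.\ \eqref{equ_lemma_bounded_ray}. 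The pixel-driven case has the same structure: \eqref{equ_lemma_pixel_sum} gives $\delta_x^2\sumx\PixelWeight(\cdot)\le 4\sqrt 2\,(\delta_x/\delta_s)\lceil\delta_s/\delta_x\rceil$, and since $\lceil t\rceil\le t+1$ one has $(\delta_x/\delta_s)\lceil\delta_s/\delta_x\rceil\le 1+\delta_x/\delta_s\le 1+c$; after swapping sums the detector sum is controlled by \eqref{equ_lemma_pixel_sum_s}, namely $\delta_s\sums\PixelWeight(\cdot)\le 1$, giving $\|\PixelRadon f\|_{L^2(\sinodom)}^2\le 4\sqrt 2\,\pi(c+1)\,\|f\|_{L^2(\imgdom)}^2$, which is \eqref{equ_lemma_bounded_pixel}.

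The argument is essentially bookkeeping and the constants fall out exactly as claimed, so there is no deep obstacle. The one point requiring care is matching the two factors of $\omega$ produced by the Cauchy--Schwarz step to the correct estimate from Lemma \ref{Lemma_sums_weights}: the ``outer'' (spatial) sum must be the one bounded by a constant times $\delta_x^{-2}$ (namely \eqref{equ_lemma_ray_sum_x}, \eqref{equ_lemma_pixel_sum}), so that the factor $\delta_x^2$ turns it into a genuine constant, while after reversing the order of summation the ``inner'' (detector) sum must be the one that is $O(\delta_s^{-1})$ (namely \eqref{equ_lemma_ray_sum_p}, \eqref{equ_lemma_pixel_sum_s}), so that the factor $\delta_s$ renders it bounded; the resolution hypotheses $\delta_s/\delta_x\le c$ resp.\ $\delta_x/\delta_s\le c$ are precisely what is needed to absorb the residual cross-terms $\delta_s/\delta_x$ resp.\ $(\delta_x/\delta_s)\lceil\delta_s/\delta_x\rceil$ into the final constant.
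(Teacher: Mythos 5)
Your proof is correct and follows essentially the same route as the paper: your weighted Cauchy--Schwarz step $\bigl(\sum a_k b_k\bigr)^2\le\bigl(\sum a_k\bigr)\bigl(\sum a_k b_k^2\bigr)$, combined with $\bigl(\int_{X_{ij}}f\dd x\bigr)^2\le\delta_x^2\int_{X_{ij}}f^2\dd x$, is exactly Jensen's inequality for the sub-probability measures the paper constructs from the weight sums, and the same estimates from Lemmas \ref{Lemma_weight_length} and \ref{Lemma_sums_weights} are invoked with the same bookkeeping. The only cosmetic difference is that for \eqref{equ_lemma_bounded_pixel} you bound $\|\PixelRadon\|$ directly on the forward operator, whereas the paper bounds $\|\PixelRadon^*\|$ and uses equality of adjoint norms; this merely swaps which of \eqref{equ_lemma_pixel_sum_s} and \eqref{equ_lemma_pixel_sum} plays the normalizing role and yields the identical constant.
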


\begin{proof}[\textbf{Proof of Lemma \ref{Lemma_bounded_operators}}]
    \underline{\eqref{equ_lemma_bounded_ray}:} 
 For each $q\in[N_\phi]$, $p\in[N_s]$, we define the measure  
\begin{equation} \label{equ_lemma_bounded_mu_1}
 \mu_{qp}:=\frac{1}{\sqrt{8}} \sumx \RayWeight(\phi_q, x_{ij}\cdot \vartheta_q-s_p) \mathcal{L}^2 \mres X_{ij}
 \end{equation} with $\mathcal{L}^2\mres X_{ij}$ the two-dimensional Lebesgue measure restricted to $X_{ij}$. Note that $\mu_{qp}$ is a sub-probability measure thanks to \eqref{equ_lemma_ray_sum_x}.
    Given $f\in L^2(\imgdom)$, and using Jensen's inequality for $\mu_{qp}$, we have
    \begin{align*}
       & \|\RayRadon f\|_{L^2(\sinodom)}^2 
        \overset{\eqref{equ_def_convolutional_Radon_transform}}{=}
         \sumphi \sums \delta_s |\Phi_q|\left|\sumx \RayWeight(\phi_q,x_{ij}\cdot \vartheta_q-s_p) \int_{X_{ij}} f(x) \dd x \right|^2 \notag
         \\
                 &\overset{\eqref{equ_lemma_bounded_mu_1}}{=} 8
         \sumphi \sums \delta_s |\Phi_q|\left| \int_{\imgdom} f(x) \dd{ \mu_{qp}(x)} \right|^2 \notag        
        \overset{\text{Jen}}{\leq}  8\sumphi \sums \delta_s |\Phi_q| \int_{\imgdom} |f(x)|^2 \dd {\mu_{qp}(x)}
        \\
  &\overset{\eqref{equ_lemma_bounded_mu_1}}{=}      \sqrt{8} 
        \sumx \int_{X_{ij}} \left|f(x) \right|^2\dd x \sumphi |\Phi_q| \sums \delta_s   \RayWeight(\phi_q, x_{ij}\cdot \vartheta_q-s_p)  
        \\
         &\overset{\eqref{equ_lemma_ray_sum_p}}{\leq}
          \sqrt{8}\pi \left(1+\sqrt{8}\frac{\delta_s}{\delta_x}\right) \|f\|_{L^2(\imgdom)}^2, \notag
    \end{align*}
    where we used $\|f\|_{L^2(\imgdom)}^2= \sumx \int_{X_{ij}} \left|f(x) \right|^2\dd x$ and $\sumphi |\Phi_q|=\pi$.
    Consequently, $\|\RayRadon\|^2\leq \sqrt{8}\pi(1 +\sqrt{8}c)$ if $\frac{\delta_s}{\delta_x}\leq c$.

\underline{\eqref{equ_lemma_bounded_pixel}}:
For $i,j\in[N_x]$, we define the (sub-probability due to \eqref{equ_lemma_pixel_sum_s}) measure 
\begin{equation}\label{equ_lemma_bounded_mu_2}
\nu_{ij}:= \frac{1}{\pi}\sumphi\sums  \PixelWeight(x_{ij}\cdot \vartheta_q-s_p) \mathcal{L}^2\mres (\Phi_q\times S_p).
\end{equation}
Given $g\in L^2(\sinodom)$, we use Jensen's inequality to get
\begin{align}
 \label{equ_proof_pixel_thm_jensens}
 &  \|\PixelRadon^* g\|_{L^2(\imgdom)}^2 \overset{\eqref{equ_def_convolution_backprojection}}{\leq} \delta_x^2 \sumx  \left| \sumphi\sums  \PixelWeight(x_{ij}\cdot \vartheta_q-s_p) \int_{\Phi_q\times S_p} g(\phi,s) \dd{(\phi,s)} \right| ^2  \notag
    \\
& \qquad \overset{\eqref{equ_lemma_bounded_mu_2}}{=} \delta_x^2 \pi^2 \sumx  \left| \int_{\sinodom} g(\phi,s) \dd{\nu_{ij} (\phi,s)} \right| ^2  
\overset{\text{Jen}}{\leq}
\delta_x^2 \pi^2 \sumx  \int_{\sinodom} |g(\phi,s)|^2 \dd{\nu_{ij}(\phi,s)}\notag
    \\    
&\qquad    \overset{\eqref{equ_lemma_bounded_mu_2}}{=}   \delta_x^2 \pi\sumx \left(\sumphi \sums \PixelWeight(x_{ij}\cdot \vartheta_q -s_p) \int_{\Phi_q\times S_p} |g(\phi,s)|^2 \dd{(\phi,s)}  \right). 
\end{align}
Pulling the sum $\sumx$ into the other summands, using \eqref{equ_lemma_pixel_sum} and $\|g\|^2_{L^2(\sinodom)}=\sumphi\sums \int_{\Phi_q\times S_p} |g|^2 \dd{(\phi,s)}$, we see  $\|\PixelRadon^* g\|_{L^2(\imgdom)}^2\leq 4\sqrt{2} \pi \frac{\delta_x}{\delta_s} \left \lceil \frac{\delta_s}{\delta_x} \right\rceil  \|g\|_{L^2(\sinodom)}^2$. 
If $\frac{\delta_x}{\delta_s}\leq c$, then $\left \lceil \frac{\delta_s}{\delta_x}\right \rceil \leq (c+1) \frac{\delta_s}{\delta_x}$ (see footnote\footnote{Given $z>0$, we have $\lceil z\rceil \leq z+1=  \frac{(z+1)}{z} z$. The ratio $\frac{(z+1)}{z}$ is monotonically decreasing in $z$. When $\frac{1}{z}\leq c$ we have $z\geq 1/c$. This yields $\lceil z \rceil \leq  \frac{z+1}{z} z \leq (c+1) z$ with $(c+1)=\frac{z+1}{z}$ for $z=\frac{1}{c}$ .
}), implying $\|\PixelRadon\|^2 = \|\PixelRadon^*\|^2 \leq  4\sqrt{2}\pi (c+1)$ for such $\delta$.
\end{proof}

\begin{proof}[\textbf{Proof of Theorem \ref{Thm_approximation_ray_driven}}]
	The proofs of \eqref{equ_thm_ray_radon}, \eqref{equ_thm_ray_estimate_backprojeciton}, \eqref{equ_thm_pixel_backprojection} and \eqref{equ_thm_pixel_radon} will work as follows. First, we show convergence for smooth functions using Taylor's theorem and estimates from Lemmas \ref{Lemma_weight_length} and \ref{Lemma_sums_weights}. Once this is achieved, the convergence statements for general $L^2$ functions is obtained using a diagonal argument that exploits the boundedness described in Lemma \ref{Lemma_bounded_operators}.

    \underline{\eqref{equ_thm_ray_radon}:} Let $f\in \mathcal{C}^\infty_c(\imgdom)$ (infinitely differentiable and compactly supported). 
Let $\delta = (\delta_x,\delta_\phi,\delta_s)$ be some discretization parameter (we will specify $\delta$ later, for now it is generic).
    Let $(\phi,s)\in \sinodom$ and let $q\in [N_\phi]$ and $p\in [N_s]$ be such that $(\phi,s)\in \Phi_q\times S_p$. Using the triangle inequality for integrals, Taylor's theorem ($|f(y)-f(x)| \leq \| \nabla f\|_{L^\infty} \|y-x\|$), and the supremum estimate $\int_A f \dd x \leq |A| \sup_{x\in A}\{f(x)\}$, we have
    \begin{align} 
\label{equ_proof_ray_driven_local_est1}
        &|[\Radon f](\phi,s)-[\Radon f](\phi_q,s_p)| \underset{\eqref{equ_def_radon_transform}}{\overset{\text{Tri}}{\leq}} \int_{-1}^1 |f(s\vartheta_\phi+t \vartheta_\phi^\perp) - f(s_p\vartheta_q+t \vartheta_q^\perp)| \dd t\notag
        \\
        &\overset{\text{Tay}}{\leq}  \int_{-1}^1 \|\nabla f\|_{L^\infty}  \|(s\vartheta_\phi+t \vartheta_\phi^\perp) - (s_p\vartheta_q+t \vartheta_q^\perp)  \| \dd t
        \\
        &\overset{\text{Sup}}{\leq} 2\|\nabla f\|_{L^\infty} \max_{t\in [-1,1]}\|(s\vartheta_\phi+t \vartheta_\phi^\perp) - (s_p\vartheta_q+t \vartheta_q^\perp)  \| \leq  4 \|\nabla f\|_{L^\infty}(\delta_s+\delta_\phi),\notag
        \end{align}
where we estimated the maximum term by $ \frac{\delta_s}{2}+ 2 \delta_\phi$ since $s\in S_p$ and $\phi \in \Phi_q$.   (Note that $f$ being smooth, $\Radon f$ is defined pointwise and not only almost everywhere.)
    
    We set $f_\delta=\sumx f_{ij} u_{ij}\in U_\delta$ with $f_{ij}=\frac{1}{\delta_x^2}\int_{X_{ij}} f(\tilde x) \dd {\tilde x}$ (the function $f_\delta$ is again understood  $\mathcal{H}^1$ almost everywhere).
         Using Taylor's theorem, noting that $|x-\tilde x|\leq \sqrt{2} \delta_x$ if $x,\tilde x \in X_{ij}$ and $\frac{1}{\delta_x^2}\int_{X_{ij}} \dd {\tilde x} =1$, we have 
\begin{align}\label{equ_proof_taylor_forward}
     |f(x)-f_\delta(x)| &= \left| \frac{1}{\delta_x^2}\int_{X_{ij}} f(x)- f(\tilde x) \dd {\tilde x} \right| \overset{\text{Tay}}{\leq} \frac{1}{\delta_x^2}\int_{X_{ij}} \|\nabla f\|_{L^\infty} \|x-\tilde x\| \dd {\tilde x} \notag
     \\ 
     &
     \leq \sqrt{2} \delta_x\|\nabla f\|_{L^{\infty}} \quad \text{for $\mathcal{H}^1$ almost all }x\in \imgdom
     \end{align}     
 (the only exceptions are corners of pixels $X_{ij}$). Thus, we have
    \begin{multline} \label{equ_proof_ray_driven_local_est2}
        \left|[\Radon f](\phi_q,s_p) - \left[\RayRadon f\right](\phi,s)\right| \overset{\eqref{equ_lemma_exact_on_U}}{=}\left|[\Radon (f-f_\delta)](\phi_q,s_p)\right| 
        \\
        \overset{\eqref{equ_proof_taylor_forward}}{\leq} 2 \sqrt{2} \delta_x
        \|\nabla f\|_{L^\infty} ,
    \end{multline}
    where we used $|\Radon f|(\phi,s) \leq 2 \|f\|_{L_{\mathcal{H}^1}^\infty}$ for all $(\phi,s)\in \sinodom$.
    Combining \eqref{equ_proof_ray_driven_local_est1} and \eqref{equ_proof_ray_driven_local_est2}, we see that 
	\begin{equation} \label{equ_proof_anglewise_convergence}
    \left | [\Radon f - \RayRadon f](\phi,s) \right|\leq 4 (\delta_x+\delta_\phi+\delta_s)\|\nabla f\|_{L^\infty} \qquad \text{for all }(\phi,s)\in \sinodom.
    \end{equation}	    
 Note that so far $\delta$ was generic, but from \eqref{equ_proof_anglewise_convergence} we conclude $\|\Radon f-\RayRadonN f\|_{L^2(\sinodom)} \to 0$ as $n\to \infty$ (and $\delta^n\to 0$).

    Let $f\in L^2(\imgdom)$ be not necessarily smooth or compactly supported, and let $\epsilon>0$. There is an $\tilde f\in \mathcal{C}^\infty_c(\imgdom)$ such that $\|f-\tilde f\|_{L^2(\imgdom)}\leq \epsilon$ (since $\mathcal{C}^\infty_c(\imgdom)$ is dense in $L^2(\imgdom)$).  There is an $N=N(\epsilon,\tilde f)\in \NN_0$ such that $\|\Radon \tilde f - \RayRadonN\tilde f\|_{L^2(\sinodom)}\leq \epsilon$ for all $n>N$ (as discussed in the previous paragraph). Then, for $n>N$, we have
   \begin{multline} \label{equ_proof_diagonal_argument}
        \|\Radon f-\RayRadonN f\|_{L^2(\sinodom)}\leq \|\Radon f-\Radon \tilde f\|_{L^2} +\|\Radon \tilde f - \RayRadonN \tilde f  \|_{L^2}+\|\RayRadonN \tilde f -\RayRadonN f \|_{L^2}  
        \\
        \leq
        (\|\Radon\|+\|\RayRadonN\|) \|f-\tilde f\|_{L^2(\imgdom)} + \|\Radon \tilde f - \RayRadonN \tilde f  \|_{L^2(\sinodom)} \leq  C \epsilon,
    \end{multline}
    where $C=\|\Radon\|+\sup_n\{\|\RayRadonN\|\}+1<\infty$ using \eqref{equ_lemma_bounded_ray} with $\frac{\delta_s^n}{\delta_x^n}\leq c$ (as assumed). Thus, for any $\epsilon>0$, we have  $\|\Radon f- \RayRadonN f\|_{L^2(\sinodom)}\leq C \epsilon$ for all  $n>M=M(\epsilon,f)=N(\epsilon,\widetilde f)$ for some $\widetilde f \in \mathcal{C}^\infty_c(\imgdom)$ with $\|f-\widetilde f\|\leq \epsilon$, implying \eqref{equ_thm_ray_radon}.


\underline{\eqref{equ_thm_ray_estimate_backprojeciton} \& \eqref{equ_thm_pixel_backprojection}:}
Let $g\in \mathcal{C}^\infty_c(\sinodom)$. Due to the compact support, we have $g(\phi,s)=0$ for all $\phi$ and $|s|>c$ with some constant $0<c<1$.

  Given $x \in \imgdom$, let $i,j\in [N_x]$ be such that $x\in X_{ij}$. We reformulate the definition of $\Radon^*$ and $\GenRadon^* $  (in \eqref{equ_def_backprojection} and \eqref{equ_def_convolution_backprojection}) to see
 \begin{align} \label{equ_proof_backproj_estimate}
        [\Radon^* g- \GenRadon^* g](x) \notag
        = \sumphi \int_{\Phi_q} &g(\phi,x\cdot \vartheta_\phi) \Big(1-\delta_s\sums \I(ij,q,p) \Big ) 
        \\
        &+ \sums \I(ij,q,p) \II(x,\phi,p) \dd \phi 
        \end{align}
\begin{equation}
 \text{with} \quad        \I(ij,q,p ):=
\omega(\phi_q,x_{ij}\cdot \vartheta_q-s_p), 
\quad
\II(x,\phi,p) :=\int_{S_p} g(\phi, x\cdot \vartheta_\phi) - g(\phi,s) \dd s. \notag
    \end{equation}
The approach for showing both  \eqref{equ_thm_ray_estimate_backprojeciton} and \eqref{equ_thm_pixel_backprojection} is quite similar when considering \eqref{equ_proof_backproj_estimate} with $\omega=\RayWeight$ or $\omega=\PixelWeight$. We want $\sums\I(ij,q,p) \approx \frac{1}{\delta_s}$, and when $\I(ij,q,p)\neq 0$, we estimate $\II(x,\phi,p)$ using Taylor's theorem. Thus, we will obtain pointwise convergence for this fixed smooth $g$.
The conclusion for general $g\in L^2(\sinodom)$ then follows via a diagonal argument analogous to \eqref{equ_proof_diagonal_argument}.


   \underline{\eqref{equ_thm_ray_estimate_backprojeciton}:} If $|x_{ij} \cdot\vartheta_q|<1-\frac{\delta_x}{\sqrt{2}} $  for fixed $q$, we have $\sums \I(ij,q,p) \in \frac{1}{\delta_s}+[-\frac{\sqrt{8}}{\delta_x},\frac{\sqrt{8}}{\delta_x}]$ according to \eqref{equ_lemma_ray_sum_p}. When $|x_{ij} \cdot\vartheta_q|\geq 1-\frac{\delta_x}{\sqrt{2}} $ on the other hand, we have $g(\phi,x\cdot\vartheta_\phi)=0$
since  $\|x-x_{ij}\|\leq\frac{\delta_x}{\sqrt{2}}$, $\|\vartheta_\phi-\vartheta_q\|\leq \delta_\phi$ and 
\begin{equation}
|x\cdot \vartheta_\phi|\geq |x_{ij}\cdot \vartheta_q|-\frac{1}{\sqrt{2}}\delta_x-\delta_\phi \geq 1 - \sqrt{2} \delta_x-\delta_\phi >c
\end{equation}
 for $\delta_x$ and $\delta_\phi$ sufficiently small (recall $\supp{s\mapsto g(\phi,s)}\subset [-c,c]$ for all $\phi$ by assumption).  In conclusion, the summand in the first row of \eqref{equ_proof_backproj_estimate} is bounded by $\sqrt{8}\frac{\delta_s}{\delta_x} \|g\|_{L^\infty}$ if $\delta$ is sufficiently small.
    
  In order to estimate $\II$, we estimate the difference in arguments of $g$ in $\II$ by
    \begin{multline}\label{equ_proof_thm_ray_intermediary}
    |x\cdot \vartheta_\phi-s|\leq \overbrace{\|x-x_{ij}\|}^{\leq \frac{1}{\sqrt{2}}\delta_x} + \overbrace{|x_{ij}\cdot\vartheta_\phi-x_{ij}\cdot\vartheta_q|}^{\leq \|x_{ij}\| \|\phi-\phi_q\| \leq \sqrt{2} \delta_\phi}+\overbrace{\leq |x_{ij}\cdot\vartheta_q-s_p|}^{\frac{1}{\sqrt{2}}\delta_x}+\overbrace{|s_p-s|}^{\leq \frac{1}{2}\delta_s}
    \\
    \leq \frac{1}{2}\delta_s+\sqrt{2}\delta_x+\sqrt{2}\delta_\phi\leq \frac{3}{2}(\delta_x+\delta_s+\delta_\phi)
    \end{multline}
    if $\phi\in \Phi_q$, $s\in S_p$, $x\in X_{ij}$  and $\I(ij,q,p)\neq 0$ (i.e., $| x_{ij}\cdot \vartheta_q-s_p|<\frac{\delta_x}{\sqrt{2}}$) and $\|x_{ij}\|\leq \sqrt{2}$.   We use Taylor's theorem to estimate that if $\I(ij,q,p)\neq 0$, then
\begin{equation}\label{equ_proof_thm_ray_secondterm}
|\II(x,\phi,p)| \overset{\text{Tri}}{\leq } \int_{S_p} |g(\phi,x\cdot \vartheta_\phi)- g(\phi,s)| \dd s  \overset{\text{Tay}}{\underset{\eqref{equ_proof_thm_ray_intermediary}}{\leq}} \frac{3}{2}\delta_s(\delta_x+\delta_s+\delta_\phi) \left \|\frac{\partial g}{\partial s} \right \|_{L^\infty}.
\end{equation}   
Moreover,  we have $\sums |\I(ij,q,p)|\leq \frac{1}{\delta_s}+{\frac{\sqrt{8}}{\delta_x}}$ in any case; see \eqref{equ_lemma_ray_sum_p}. So the second line of \eqref{equ_proof_backproj_estimate} can be bounded by $\frac{3}{2}(1+\sqrt{8}\frac{\delta_s}{\delta_x})(\delta_x+\delta_s+\delta_\phi)\|\frac{\partial g}{\partial s}\|_{L^\infty}$.
Therefore, according to \eqref{equ_proof_backproj_estimate}, we have
\begin{multline}
   \left| [\Radon^* g- \RayRadon^* g](x)\right|\\\leq \int_0^{\pi} \sqrt{8} \frac{\delta_s}{\delta_x} \|g\|_{L^\infty} + 
     \frac{3}{2} \left (1+\sqrt{8}\frac{\delta_s}{\delta_x} \right) (\delta_x+\delta_s+\delta_\phi) \left \|\frac{\partial g}{\partial s} \right \|_{L^\infty}  \dd \phi .
\end{multline}
         Note that the integrand does not depend on $\phi$, resulting in
    \begin{multline} \label{equ_proof_estimate_ray-driven_backprojection}
   \left| [\Radon^* g- \RayRadon^* g](x)\right| \\\leq \sqrt{8} \pi \frac{\delta_s}{\delta_x} \|g\|_{L^\infty}+ \frac{3}{2} \pi  \left (1+\sqrt{8}\frac{\delta_s}{\delta_x} \right) (\delta_x+\delta_s+\delta_\phi) \left \|\frac{\partial g}{\partial s} \right \|_{L^\infty}
    \end{multline}   
    if $\delta$ is sufficiently small.
When increasing $n\to \infty$ (and thus $\delta^n\to 0$), we therefore obtained the desired convergence $\|\Radon^* g- \RayRadonN^* g\|_{L^2(\sinodom)}\to 0$ for smooth functions $g$ (assuming $\frac{\delta_s^n}{\delta_x^n} \overset{n\to \infty}{\to} 0 $). The convergence for general $g\in L^2(\sinodom)$ follows using a diagonal argument analogous to \eqref{equ_proof_diagonal_argument} (with $\|\RayRadon^*\|$ bounded due to \eqref{equ_lemma_bounded_ray}).

\underline{\eqref{equ_thm_pixel_backprojection}:} For $|x_{ij}\cdot\vartheta_q|<1-\frac {\delta_s}{2}$ ($=|s_0|=|s_{N_s-1}|$), via \eqref{equ_lemma_pixel_sum_s} we have $\sums \I(ij,q,p)=\frac{1}{\delta_s}$. Again, if $|x_{ij}\cdot\vartheta_q|\geq 1-\frac {\delta_s}{2} $, the corresponding $g(\phi,x\cdot \vartheta_\phi)$ (with $x\in X_{ij}$ and $\phi \in \Phi_q$) equals  $0$ for $\delta$ sufficiently small since 
\begin{equation}
|x\cdot\vartheta_\phi|> |x_{ij}\cdot\vartheta_q| -\frac{\delta_x}{\sqrt{2}}-\delta_\phi> 1 - \frac{\delta_s}{2}-\frac{\delta_x}{\sqrt{2}}-\delta_\phi>c,
\end{equation}
where again $c$ is the upper bound on the $s$ variables in the support of $g$.
Hence, the summand in the first row of \eqref{equ_proof_backproj_estimate} is zero.

Again, $|\II (x,\phi,p)|\leq  \frac{3}{2}\delta_s(\delta_s+\delta_x+\delta_\phi)\left \|\frac{\partial g}{\partial s} \right \|_{L^\infty}$ if $\I(ij,q,p)\neq 0$ (with $x\in X_{ij}$ and $\phi \in \Phi_q$) using Taylor's theorem like in \eqref{equ_proof_thm_ray_secondterm}.  Moreover, $\sums |\I(ij,q,p)| \leq \frac{1}{\delta_s}$.

In conclusion, we have
\begin{align}
\label{equ_proof_pixel-driven_backprojection_estimate}
    | [\Radon^* g-\PixelRadon^* g](x) |
    &
    \leq  \int_0^\pi  (\delta_s+\delta_x+\delta_\phi)\frac{3}{2} \left \|\frac{\partial g}{\partial s} \right \|_{L^\infty}\dd \phi\notag
    \\ 
    &
    = \frac{3}{2}\pi (\delta_s+\delta_x+\delta_\phi) \left \|\frac{\partial g}{\partial s} \right \|_{L^\infty}
\end{align}
for all $x\in \imgdom$ if $\delta$ is sufficiently small, implying $\|\Radon^* g-\PixelRadonN^* g\|_{L^2(\imgdom)} \to 0$ as $n\to \infty$.
The proof for general $g\in L^2(\sinodom)$ follows again by a diagonal argument analogous to \eqref{equ_proof_diagonal_argument} (using \eqref{equ_lemma_bounded_pixel}).

\underline{\eqref{equ_thm_pixel_radon}}
Let $f\in \mathcal{C}^\infty_c(\imgdom)$. 
From \cite[Theorem 2.16 and its proof]{doi:10.1137/20M1326635} and \cite[Theorem III.2.30]{huber2022pixel}, it is known that
\begin{multline}
\|[\Radon f] (\phi_q,\cdot) - [\PixelRadon f](\phi_q,\cdot) \|_{L^2(]-1,1[)} 
\\ 
\leq c \left ( \sup_{|h|\leq \frac{3}{2}\delta_s}M_{[\Radon f](\phi_q,\cdot)}(h) + \sqrt{1+\frac{\delta_x}{\delta_s}}\frac{\delta_x}{\delta_s} \|f\|_{L^2}\right),
\end{multline}
where the constant $c$ does not depend on the specific $\phi_q$ or $f$ and the $L^2$ modulus of continuity is $M_g(h)^2:= \int_{-1}^1 | g(s+h)-g(s)|^2 \dd s$.
It is well-known that the Radon transform of a smooth function satisfies $\| \frac{\partial [\Radon f](\phi_q,\cdot)}{\partial s} \|_{L^\infty} \leq 2 \|\nabla f \|_{L^\infty}$, and using Taylor's theorem it is trivial to estimate $M_g(h)\leq \sqrt{2}h\|\frac{\partial g}{\partial s}\|_{L^\infty}$. Consequently, for smooth $f$, we have
\begin{multline}\label{equ_proof_thm_pixel_driven_radon_estimate_for_fixed_angle}
\|[\Radon f] (\phi_q,\cdot) - [\PixelRadon f](\phi_q,\cdot) \|_{L^2(]-1,1[)} 
\\
\leq c \left( 3  \sqrt{2} \delta_s\|\nabla f\|_{L^\infty}+ \sqrt{1+\frac{\delta_x}{\delta_s}}\frac{\delta_x}{\delta_s} \|f\|_{L^2}\right),
\end{multline}
where again $c$ does not depend on $\phi$ or $f$. Hence, for $\phi \in \Phi_q$ for some fixed $q$, we have
\begin{multline} \label{equ_proof_thm_pixel_driven_radon_estimate_angles}
\|[\Radon f] (\phi,\cdot) - [\PixelRadon f](\phi,\cdot) \|_{L^2(]-1,1[)}  
\\
\leq \|[\Radon f] (\phi,\cdot) - [\Radon f](\phi_q,\cdot) \|_{L^2(]-1,1[)} 
 + \|[\Radon f] (\phi_q,\cdot) - [\PixelRadon f](\phi_q,\cdot) \|_{L^2(]-1,1[)} 
\\
\leq 2 
\left\|\frac{\partial [\Radon f] }{\partial \phi} \right \|_{L^\infty} \delta_\phi + c \left( 3  \sqrt{2}\delta_s\|\nabla f\|_{L^\infty}+ \sqrt{1+\frac{\delta_x}{\delta_s}}\frac{\delta_x}{\delta_s} \|f\|_{L^2}\right),
\end{multline}
where we used Taylor's theorem for the first term and \eqref{equ_proof_thm_pixel_driven_radon_estimate_for_fixed_angle} for the second term.
Integrating \eqref{equ_proof_thm_pixel_driven_radon_estimate_angles} with respect to $\phi$ yields that 
\begin{equation}
\|\Radon f- \PixelRadon f\|_{L^2(\sinodom)} \leq k \left(\delta_\phi+\delta_s+\frac{\delta_x}{\delta_s}\right),
\end{equation}
where the constant $k=k(f)>0$ depends on $f$ but not on $\delta$ (when $\frac{\delta_x}{\delta_s}$ remains bounded). Hence, we achieved $\PixelRadonN f \to \Radon f$ in the $L^2$ norm as $\delta^n \to 0$ (assuming $\frac{\delta_x^n}{\delta_s^n} \to 0$) for smooth functions $f$. For general $L^2$ functions, again a diagonal argument as in \eqref{equ_proof_diagonal_argument} is possible, yielding the desired estimate.

\end{proof}

\begin{proof}[\textbf{Proof of Corollary \ref{Cor_limited_angle}}]
Given the angles $\phi_q \in \mathcal{A}$ and related $\widetilde \Phi_q$ for $q\in[N_\phi]$, there is a set of angles $\varphi_0,\dots,\varphi_N$ such that the set of corresponding angular pixels $\Phi_k$ for $k\in[N]$ contains all $\widetilde \Phi_q$ for $q\in [N_\phi]$, $\{\phi_q \ | \ q \in [N_\phi]\}\subset \{\varphi_k \ | \ k \in [N]\}$ and $\delta_\phi=\max_{q\in[N_\phi]} |\widetilde \Phi_q|$. Thus, $\GenRadon_{\mathcal{A}}f$ is the restriction of $\GenRadon f$ (created with these $\varphi_k$ angles), so convergence in the strong operator topology of $\GenRadon$ immediately implies convergence for $\GenRadon_{\mathcal{A}}$. Similarly, $\GenRadon_{\mathcal{A}}^* g=\GenRadon^* \tilde g$ if $\tilde g = g $ on $\sinodom_\mathcal{A}$ and zero otherwise. Naturally, convergence of $\GenRadon^* $ in the strong operator topology then implies the same convergence for $\GenRadon{\mathcal{A}}^* $.
\end{proof}

\begin{proof}[\textbf{Proof of Corollary \ref{thm_convergence_sparse_angle}}]
Since $L^2(\sinodom_\mathbb{F})$ is not a subspace of $L^2(\sinodom)$, we cannot replicate the proof for Corollary \ref{Cor_limited_angle}.
Obviously, $\|\GenRadon_{\mathbb{F}} f\|_{L^2(\sinodom_\mathbb{F})}=\|\GenRadon f\|_{L^2(\sinodom)}$ (for each $f\in L^2(\imgdom)$) and therefore Lemma \ref{Lemma_bounded_operators} remains true for $\Radon_{\delta \mathbb{F}}^\mathrm{rd}$ and $\Radon_{\delta \mathbb{F}}^\mathrm{pd}$. One can follow the proof of Theorem \ref{Thm_approximation_ray_driven} word for word when replacing integrals with regard to $\phi$ with sums over $\mathbb{F}$. Important details are that \eqref{equ_proof_anglewise_convergence} holds for all $\phi \in [0,\pi[$ (and not only almost all) thus it also holds for all $\phi\in \mathbb{F}$, and the estimates for the integrands inside \eqref{equ_proof_backproj_estimate} we performed hold for all angles $\phi \in [0,\pi[$, thus in particular also for $\phi \in \mathbb{F}$.
\end{proof}

\section{Numerical Aspects}
\label{section_numerics}

In this section, we describe the implementation of convolutional discretizations and perform numerical experiments to complement the presented theoretical results.

\subsection{Implementation}
Following the formulation in \eqref{equ_def_convolutional_Radon_discrete}, Algorithm \ref{algo:Pixel-Driven-Radon} describes the implementation of the discrete convolutional forward projection $[\GenRadon f](\phi_q,s_p) \approx (A \overline f)[qp]$ for a phantom $f$ and the corresponding vector $\overline f$ (as an $N_x\times N_x$ array $\overline f$ containing the coefficients $\overline f [ij]=\frac{1}{\delta_x^2}\int_{X_{ij}} f \dd x$) and one specific pair of indices $(q,p) \in [N_\phi]\times [N_s]$. 
Analogously, Algorithm \ref{algo:Pixel-Driven-backprojection} describes the implementation of the discrete convolutional backprojection $[\GenRadon^* g] (x_{ij}) \approx (B \overline g)[ij]$ (see \eqref{equ_def_convolutional_backproj_discrete}) for an $N_\phi\times N_s$ sinogram array $\overline g$ with $\overline g[qp]= \frac{1}{\delta_s|\Phi_q|} \int_{\Phi_q\times S_p} g \dd{(\phi,s)}$ and one specific pixel center $x_{ij}$ (with indices $i,j\in [N_x]$).

Note that the entire discrete Radon transform (or backprojection) can be calculated by executing Algorithm \ref{algo:Pixel-Driven-Radon} (or Algorithm \ref{algo:Pixel-Driven-backprojection}) for each sinogram pixel $(q,p)$ (or space pixel $i,j$) individually. In particular, this is highly parallelizable.

\begin{algorithm}
    \caption{Convolutional Forward Projection }
	     \hspace*{-0.8cm}\textbf{Input:} \begin{minipage}[t]{0.8\textwidth}
 $N_x\times N_x$ phantom array $\overline f$, angle index $q\in [N_\phi]$   and detector index $p\in [N_s]$	
\end{minipage}	     
\\ 
\hspace*{-6.3cm} \textbf{Output: $(A\overline f)[qp]$ according to \eqref{equ_def_convolutional_Radon_discrete}}
    \label{algo:Pixel-Driven-Radon}
    \begin{algorithmic}[1] 
        \Function{Forwardprojection $(\overline f,q,p)$}{}
            				\State $\text{val} \gets 0$                
				 \For{ $j\in [N_x]$} \label{line:middle_forloop}
					\For {$i \in \mathcal{X}_{qp}^j:= \{i\in [N_x] \ \big|  \ x_{ij}\cdot\vartheta_q-s_p \in \supp{\omega(\phi_q,\cdot)}\}$} \label{line:inner_forloop}
					\State $\text{val}\gets \text{val}+\omega(\phi_q,x_{ij}\cdot \vartheta_q-s_p)\ \overline f[ij]$
					\EndFor
				\EndFor							
            \State \textbf{return} $\delta_x^2\text{val}$
        \EndFunction
    \end{algorithmic}
\end{algorithm}
\begin{algorithm}
    \caption{Convolutional Backprojection}
    \label{algo:Pixel-Driven-backprojection}
      \hspace*{-1.2cm}   \textbf{Input:} $N_\phi\times N_s$ sinogram array $\overline g$, spatial indices $i$ and $j$ in $[N_x]$ \\
  \hspace*{-6.4cm} \textbf{Output: $(B\overline g)[ij]$ according to \eqref{equ_def_convolutional_backproj_discrete}}
    \begin{algorithmic}[1] 
        \Function{Backprojection $(\overline g,i,j)$}{}
            				\State $\text{val} \gets 0$                
				 \For{ $q\in [N_\phi]$} \label{line:middle_forloop}
					\For {$p \in \mathcal{Y}_{ij}^q:= \{p\in [N_s] \ \big|  \ x_{ij}\cdot\vartheta_q-s_p \in \supp{\omega(\phi_q,\cdot)}\}$} \label{line:inner_forloop}
					\State $\text{val}\gets \text{val}+|\Phi_q|\ \omega(\phi_q,x_{ij}\cdot \vartheta_q-s_p)\ \overline g[qp]$
					\EndFor
				\EndFor							
            \State \textbf{return} $\delta_s\text{val}$
        \EndFunction
    \end{algorithmic}
\end{algorithm}

In principle, these algorithms are nothing more than matrix-vector multiplications. However, a key step is the determination of non-zero matrix entries (for the sake of efficiency) that is achieved via the sets $\mathcal{X}_{qp}^j$ and $\mathcal{Y}_{ij}^q$ in the algorithms' lines 4. If the weight function $t\mapsto\omega(\phi_q,t)$ has connected support $[\underline c_q,\overline c_q]$ (as we have for $\RayWeight$ and $\PixelWeight$), the relevant pixels $\mathcal{X}_{qp}^j$ can be determined efficiently.
Indeed, $\mathcal{X}_{qp}^j=[\underline i,\overline i] \cap [N_x]$ with
\begin{equation}
(\underline i, \overline i) = \text{sort}\left\{ \frac{ \frac {(\underline c_q +s_p- \bold y_j \cdot \vartheta_{\bold y} )}{ \vartheta_{\bold x}} +1}{\delta_x} -\frac 12 \ ,\ \frac{ \frac {(\overline c_q +s_p- \bold y_j \cdot \vartheta_{\bold y} )}{ \vartheta_{\bold x}} +1}{\delta_x} -\frac 12 \right \},
\end{equation}
where $\vartheta_q = (\vartheta_{\bold x}, \vartheta_{\bold y})$ denotes the two components of the projection direction and $x_{ij}=(\bold x_i, \bold y _j)$ with $\bold y_j = (j+\frac{1}{2})\delta_x-1$. Note that the formula only works if $\vartheta_{\bold x}\neq 0$. Similar to the method described in \cite{doi:10.1118/1.4761867},  one can swap the roles of $\bold x$ and $\bold y$ (and $i$ and $j$) if $|\vartheta_{\bold x}|<\sqrt{\frac{1}{2}}$.

Analogously, the set $\mathcal{Y}_{ij}^{q}$ can be identified (for $\omega$ with connected support) via $\mathcal{Y}_{ij}^{q} = [\underline p,\overline p] \cap [N_s]$ with
\begin{equation}
(\underline p,\overline p) = \left (\frac{x_{ij}\cdot\vartheta_q - \overline c_q+1}{\delta_s}-\frac{1}{2}\ ,\ \frac{x_{ij}\cdot\vartheta_q - \underline c_q+1}{\delta_s}-\frac{1}{2} \right).
\end{equation} 

As described in \cite{10.1007/s00245-022-09933-5}, the set $\mathcal{X}_{qp}^j$ for the ray-driven method with $\vartheta_{\bold x}>\frac{1}{\sqrt{2}}$ has at most two entries. Similarly, as described in \cite{doi:10.1137/20M1326635}, the set $\mathcal{Y}_{ij}^q$ has at most two entries for the pixel-driven method (see also \eqref{equ_lemma_relevant_p_pd}). This is of no immediate algorithmic advantage, but might have indirect advantages in terms of memory access.
Note that the computational complexities of different convolutional discretizations only differ by the complexity of evaluating weight functions $\omega$.

\subsection{Numerical experiments}

In this section, we perform numerical experiments to complement the presented
theoretical results. To that end, we executed pixel-driven projections using the
Gratopy toolbox \cite{kristian_bredies_2021_5221443}, 
while a custom implementation of the ray-driven method (as
an extension of Gratopy using Algorithms \ref{algo:Pixel-Driven-Radon} and \ref{algo:Pixel-Driven-backprojection}) was employed. The calculations were executed on a 12th Gen Intel(R)
Core(TM) i7-12650H processor in parallel with single precision. Throughout this
section, angles are chosen uniformly distributed, i.e., $\phi_q =  \frac{q}{N_\phi }\pi$ for $q \in [N_\phi]$. 
The
corresponding code is available via the GITHUB repository \cite{Huber_Github_2025_Simulations_Simulations_Convergence_Ray_Pixel_Driven_Strong_Topology}.

A number of numerical simulations concerning the approximation properties of convolutional discretizations were already presented in the author's conference paper \cite[Section 4]{10.1007/978-3-031-92366-1_11}, which is recommended for supplementary reading. The phantom and sinograms considered in those experiments were quite simple (an ellipse phantom and constant/linear sinograms), but the results are nonetheless illustrative.
Moreover, numerical experiments concerning \ref{equ_thm_pixel_radon} have already been presented in \cite{doi:10.1137/20M1326635}.

In the following, we consider piecewise constant functions $f=(f_{ij})_{ij}$ and $g=(g_{qp})_{qp}$ (as described above $f_{ij}$ and $g_{qp}$ refer to the pixel value associated with the pixel with the same indices) and for practical calculations associate them with vectors $\overline f$ and $\overline g$. 
We use the standard $\ell^2$ norms $\|g\|_{\ell^2([N_\phi]\times[N_s])}^2:= \delta_\phi \delta_s\sum_{q=0}^{N_\phi-1}\sum_{p=0}^{N_s-1} |g_{qp}|^2$ and $\|f\|_{\ell^2([N_x]^2)}^2:= \delta_x^2\sum_{i,j=0}^{N_x-1} |f_{ij}|^2$, where $\delta_s=\frac{2}{N_s}$, $\delta_x =\frac{2}{N_x}$ and $\delta_\phi = \frac{\pi}{N_\phi}$. It will be convenient for us to consider individual projections $g_q= (g_{qp})_p$ for a specific angular index $q$, and correspondingly $\|g_q\|_{\ell^2([N_s])}^2=\delta_s\sum_{p=0}^{N_s-1}|g_{qp}|^2$.
 Since all our sampling is uniform, these norms corresponds to the $L^2$ norm of the associated piecewise constant functions we described in the theory sections. In slight abuse of notation, $\RayRadon$, $\RayRadon^*$, $\PixelRadon$ and $\PixelRadon^*$ will accept and output arrays of the appropriate dimensions.

To quantitatively evaluate the performance of projections and backprojections, one would want to calculate the $L^2$ norms of differences between `true' projections and the described discretizations.  Thus, we define the $L^2$ error metrics
\begin{align}
&E(g,\tilde g) := \frac{\| g-\tilde g\|_{\ell^2([N_\phi]\times[N_s])}}{\|g\|_{\ell^2([N_\phi]\times[N_s])}} \qquad \text{and} \qquad E(f,\tilde f) := \frac{\| f-\tilde f\|_{\ell^2([N_x]^2)}}{\|f\|_{\ell^2([N_x]^2)}}. \notag 
\end{align}
Moreover, we define the angle-specific $L^2$ error and the worst angle $L^2$ error
\begin{align}
E_{\phi_q}(g,\tilde g) := \frac{\| g_{q}-\tilde g_{q}\|_{\ell^2([N_s])}}{\|g_q\|_{\ell^2([N_s])}} \qquad \text{and} \qquad
E_\text{max}(g,\tilde g) := \max_{q \in [N_\phi]} E_{\phi_q} (g,\tilde g). \notag
\end{align}

\subsubsection{Numerical experiment on the forward operator}
Here, we extend the experiments performed in \cite{10.1007/978-3-031-92366-1_11} on simple phantoms by using the FORBILD head phantom \cite{yu2012simulation}, which is more representative of real-world medical applications. It is a more complex phantom, containing a large number of ellipses (also intersected with half planes); see Figure \ref{Fig_numerics_phantom} a). We assume the phantom to occupy a 25cm$\times$25cm square and consider a detector with 25cm width  such that $s\in[-12.5,12.5]$ with $s=0$ corresponds to straight lines passing through the center of the phantom's square.
Naturally, this setting is equivalent (up to scaling) to the normalized setting described in Section \ref{section_Radon_notation}.
For now, let us fix $N_x=4096$, $N_s=4096$ (a balanced resolutions setting) and $N_\phi=1800$.
We used the code \cite{Forbildgen} (see \cite{yu2012simulation} for its documentation) to access the `discrete' FORBILD head phantom $f=(f_{ij})_{ij}$.  Note that the discrete phantom is a pointwise evaluation of the analytical representation (via ellipses, etc.), and not the mean values of the analytical phantom as proposed for convolutional discretizations in \eqref{equ_def_convolutional_Radon_transform}. The code also allows for the exact pointwise evaluation of the analytical phantom's Radon transform, depicted in Figure \ref{Fig_numerics_phantom} b). We denote the pointwise evaluations of these analytical projections by $g=(g_{qp})_{qp}$. The exact Radon transform of the discrete phantom does not necessarily coincide perfectly with the analytical Radon transform (of the analytical phantom). Since we use a very high resolution, the differences are, however, minimal and should not create any significant issues. Hence, we tacitly use the evaluations of the analytical Radon transform $g$ as ground truth below.

\begin{figure}[t]
\center
\newcommand{\mycolor}{red}
\newcommand{\myheight}{0.35}
\begin{overpic}[height=\myheight \textwidth]{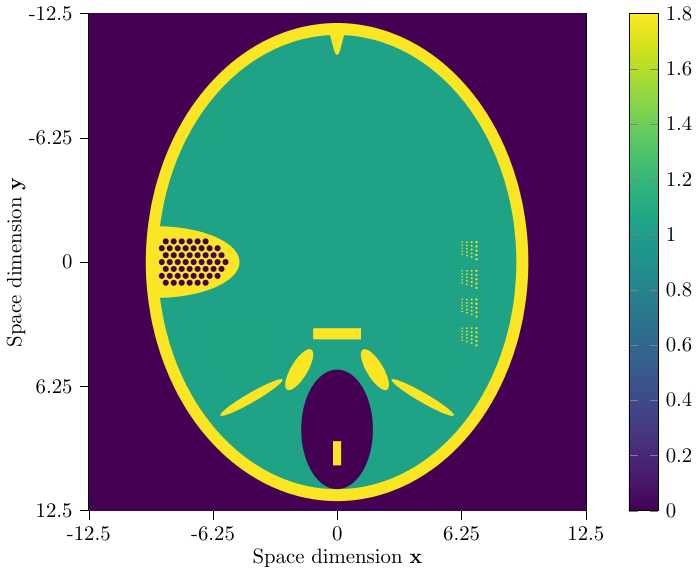}
 \put(15,84){\color{\mycolor} a) \color{black} \tiny FORBILD Head Phantom $f$}
 \end{overpic}
\begin{overpic}[height=\myheight \textwidth,rotate = 90]{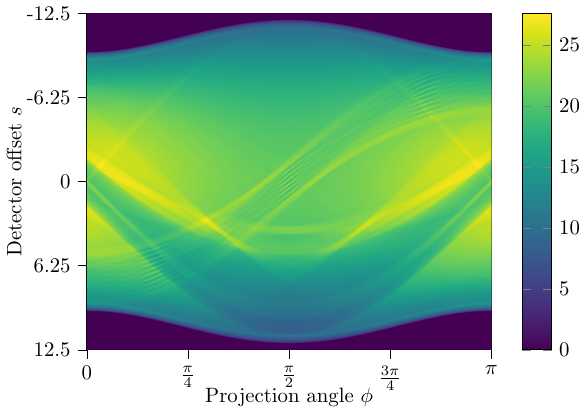}
 \put(15,71){\color{\mycolor} b) \color{black} \tiny Analytical projection $[\Radon f](\phi,s)$}
 \end{overpic} 
\caption{Depiction of the discrete $4096\times4096$ pixel FORBILD head phantom placed in the $[-12.5,12.5]^2$ square in a) and the corresponding $4096\times1800$ analytical Radon transform in b).}
\label{Fig_numerics_phantom}
\end{figure}

The corresponding ray-driven and pixel-driven Radon transforms (of the discrete phantom) are visually identical to the analytical projection. However, upon closer inspection, there are structural differences between the methods. Figure \ref{Fig_pointwise_errors} depicts the absolute differences of these two methods to the analytical projection (where, for visibility, we clipped the more extreme values; note the different scales between Figure \ref{Fig_numerics_phantom} b) and Figure \ref{Fig_pointwise_errors}).
Overall, both methods did very well. Larger errors are seen related to the finer structures in the phantom, which is to be expected.
However, there are some angles (most notably $\frac{\pi}{4}=45^\circ$ and $\frac{3\pi}{4}=135^\circ$), for which the pixel-driven projections appear very poor on the entire detector (see the vertical `streaks' in Figure \ref{Fig_pointwise_errors} b)), while the errors appear much more consistent in the ray-driven method. 

\begin{figure}
\vspace*{0.5cm}
\center
\newcommand{\myheight}{0.47}
\newcommand{\mycolor}{red}
\begin{overpic}[width=\myheight \textwidth]{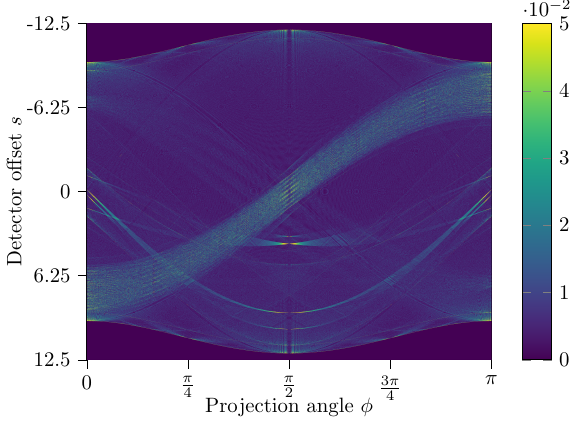}
 \put(10,75){\begin{minipage}{0.35\textwidth}\center
 \color{\mycolor} a) \color{black} \tiny  Ray-driven absolute difference 
 \\
 $\left|[(\Radon-\RayRadon) f](\phi,s)\right|$\end{minipage}}
 \end{overpic}
\begin{overpic}[width=\myheight \textwidth,rotate = 90]{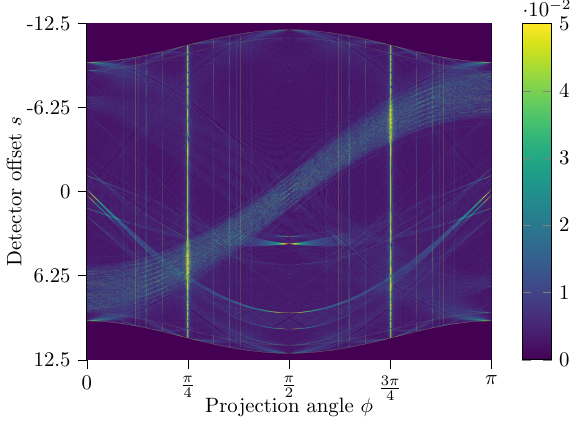}
 \put(10,75){\begin{minipage}{0.35\textwidth}\center
 \color{\mycolor} a) \color{black} \tiny  Pixel-driven absolute difference 
 \\
 $\left|[(\Radon-\PixelRadon) f](\phi,s)\right|$\end{minipage}} 
 \end{overpic}
 \caption{Illustration of the pointwise absolute difference between the analytical Radon transform of the FORBILD phantom and the ray-driven projection (on the left) or the pixel-driven projection (on the right). For both, the balanced situation $N_x=N_s=4096$ and $N_\phi = 1800$ is used.}
 \label{Fig_pointwise_errors}
 
\end{figure}

To illustrate the bad projections' behavior, Figure \ref{Fig_numerics_plot_worst_projection} depicts the  projections for the angles $\phi= 135^\circ$ and $\phi=135.1^\circ$. As can be seen, the pixel-driven projection creates very significant oscillations for the former, while the change by a tenth of a degree reduces these oscillations significantly. This can also be observed in the relative $L^2$ error for these projections reducing from $E_{135^\circ}(g,\PixelRadon f) \approx 6.6\%$ to $E_{135.1 ^\circ}(g,\PixelRadon f) \approx0.5\%$.

As discussed in \cite{doi:10.1137/20M1326635} and \cite{huber2022pixel}, multiples of $\frac{\pi}{4}$ appear to be prime suspects for such bad projections in pixel-driven Radon transforms, and the oscillations reduce with unbalanced resolutions $\frac{\delta_x}{\delta_s}\to 0$. \cite[Example III.2.33]{huber2022pixel} gives an analytical explanation of why some angles produce such bad projections although one would think these projections `easier' than others. A naive explanation is that if $\frac{\delta_x}{\delta_s}=c$, then the weights $\PixelWeight(x_{ij}\cdot \vartheta_q-s_p)$ do not attain a meaningful value for each individual pixel $X_{ij}$, but since the values $x_{ij}\cdot \vartheta_q-s_p$ are approximately `uniformly distributed' for most $q$, they achieve the right value in the mean of all $ij\in [N_x]^2$. However, when the direction $\vartheta_q$ is for example upwards or diagonal, the pixels $X_{ij}$ appear much more aligned, and the suggested uniform distribution is broken.
If we knew a priori which angles were bad, a minuscule angular shift (thus avoiding the poor projections) could be a strategy to remedy the oscillations.

\begin{figure}

\newcommand{\mycolor}{teal}
\newcommand{\myheight}{0.4}
\begin{overpic}[height=\myheight \textwidth]{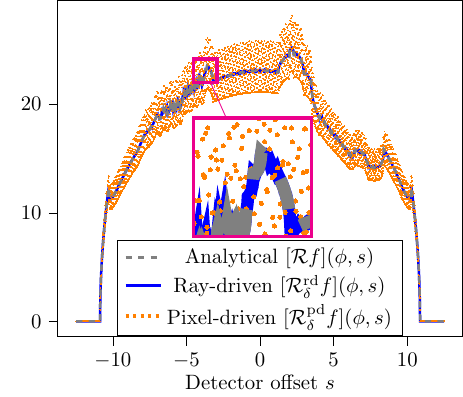}
 \put(15,79){\color{\mycolor} a) $\phi=135.0^\circ$}
 \end{overpic}
 \begin{overpic}[height=\myheight \textwidth]{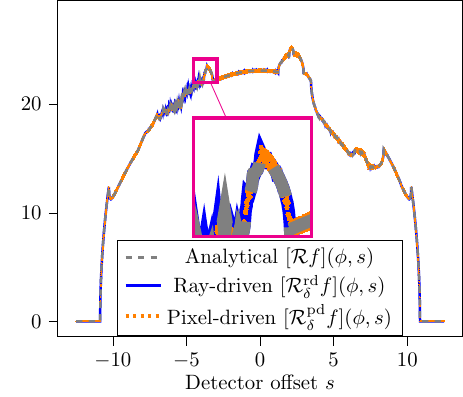}
 \put(15,79){\color{\mycolor} b) $\phi=135.1^\circ$}
 \end{overpic}

\caption{Illustration of the projections for $\phi=\frac{3}{4}\pi=135^\circ$ on the left, and for $\phi = 135.1^\circ$ on the right. Even though the angles only differ by a tenth of a degree, the pixel-driven method reveals significant errors in the former setting that have all but disappeared in the latter one.}
\label{Fig_numerics_plot_worst_projection}

\end{figure}

Figure \ref{Fig_numerics_error_per_projection} plots the angle-specific relative $L^2$ errors $E_\phi(g,\PixelRadon f)$ of each individual projection angle $\phi$ to illustrate the presence of these outlier projections further. As can be seen, for many projection angles, the pixel-driven Radon transform incurs smaller errors than the ray-driven Radon transform, which in itself is surprising given the method's reputation (and \eqref{equ_thm_pixel_radon}). Besides the mentioned $45^\circ$ and $135^\circ$ projection angles, there are a few other (less severe) outliers in the pixel-driven projection. 
In particular, the relative error of $0.5\%=5\ 10^{-3}$ for $135.1^\circ$ we mentioned above still exceeds average errors (of roughly $0.05\%$) by a factor of $10$. In contrast, the errors of the ray-driven method appear much more uniform for all angles.

\begin{figure}
\center
\includegraphics[width=0.9\textwidth]{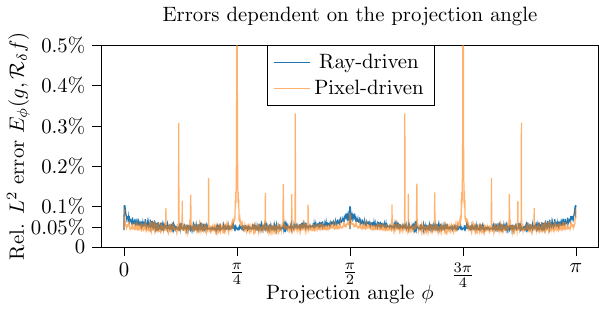}
\caption{Illustration of the relative $L^2$ error of the individual projections (i.e., for variable $\phi$) $E_{\phi_q}(g,\RayRadon f)$ and $E_{\phi_q}(g,\PixelRadon f)$. The extreme cases $\frac{\pi}{4}$ and $\frac{3\pi}{4}$ are far beyond the shown scale (by a factor of 10), but there are also other projections whose errors far exceed the average errors (around $5\ 10^{-4}$).}

\label{Fig_numerics_error_per_projection}
\end{figure}


So far, we have only considered the fixed resolutions $N_x = N_s = 4096$ and $N_\phi = 1800$. However, for us, the dependence of errors on the discretization parameters is of particular importance. To that end, in Figure \ref{Fig_numerics_evolution_errors} a), we illustrate the evolution of the relative $L^2$ errors $E(g,\RayRadon f)$ and $E(g,\PixelRadon f)$ of the ray-driven and pixel-driven methods in a balanced resolutions setting with fixed $N_\phi=360$ and increasing $N_x=N_s$. It appears that the ray-driven error moves towards zero with increasingly finer resolutions, while the pixel-driven's error appears to stagnate (or slow down significantly) long before reaching zero. Note that this is consistent with the theory, suggesting convergence for the ray-driven method (see Corollary \ref{thm_convergence_sparse_angle}), while for the pixel-driven method, our theory does not offer implications. 
Figure \ref{Fig_numerics_evolution_errors} b) plots the corresponding relative $L^2$ errors of the worst projection $E_\text{max}(g,\Radon_\delta f)$. As can be seen, this error is stagnant for the pixel-driven method, while it decreases significantly for the ray-driven method.
This suggests that the root cause of the oscillations in the pixel-driven setting remains even if we refine the balanced resolution.

Note that we also plotted the error for the pixel-driven projection with only $180$ projections in Figure \ref{Fig_numerics_evolution_errors} a),  showing a similar stagnation but at a higher error (roughly by a factor of $\sqrt{2}$). The individual projections' quality remained the same, but having fewer projections in total, the outliers contributed proportionally more (by a factor of $\sqrt{2}$) to the overall error in the sinogram.

\begin{figure}
\center
\newcommand{\mycolor}{red}
\newcommand{\mywidth}{0.85}
\vspace*{0.25cm}
\begin{overpic}[width=\mywidth \textwidth]{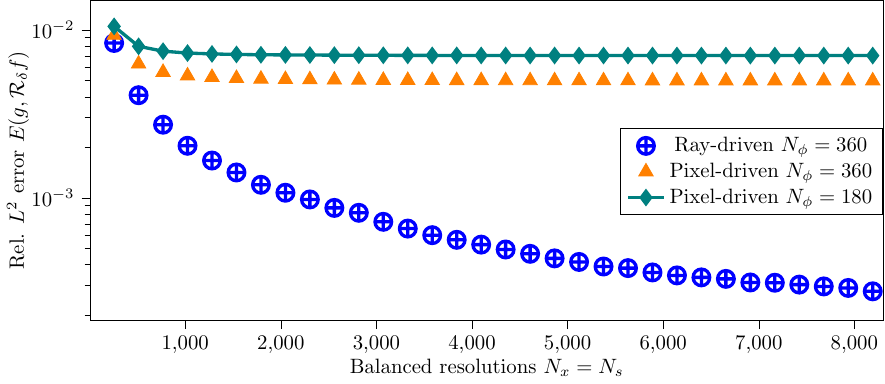}
\put(10,45){\color{\mycolor} a) \color{black} $L^2$ errors for the entire sinogram}
\end{overpic}\vspace{0.5cm}

\begin{overpic}[width=\mywidth \textwidth]{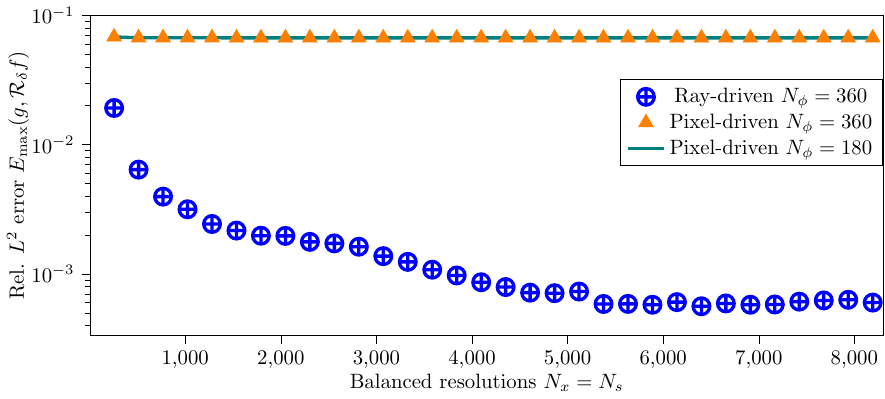}
\put(10,45){\color{\mycolor} b) \color{black}   Worst projection's $L^2$ errors }
\end{overpic}
\caption{Illustration of the relative $L^2$ errors for fixed $N_\phi=360$ and increasing balanced resolutions $N_x=N_s$ between $256$ and $8192$ in 32 steps. On the top, the errors $E(g,\Radon_\delta f)$ for the entire sinogram are depicted, while the figure below shows the relative error of the worst single projection $E_\text{max}(g,\Radon_\delta f)$. As can be seen, the pixel-driven error appears stagnant.}
\label{Fig_numerics_evolution_errors}

\end{figure}


\subsubsection{Numerical experiments on the backprojection}
We consider the sinogram $g(\phi,s)= 1$ for all $(\phi,s)\in \sinodom$. It is trivial to calculate that $[\Radon^* g ](x) = \pi$ for all $x \in \imgdom$ from the definition \eqref{equ_def_backprojection}. We assume that in the following all angles $\phi_q$ are sampled equidistantly in $[0,\pi[$.

Calculating the backprojections explicitly for an image pixel $ij$, we see
\begin{equation}\label{equ_numerics_backprojection_ray_driven_definition}
[\RayRadon^* g]_{ij} \overset{\eqref{equ_def_convolutional_backproj_discrete}}{=} \sumphi |\Phi_q| \delta_s \sums \RayWeight(\phi_q,x_{ij}\cdot \vartheta_q-s_p),
\end{equation}
and 
\begin{equation}\label{equ_numerics_backprojection_pixel_driven_definition}
[\PixelRadon^* g]_{ij} \overset{\eqref{equ_def_convolutional_backproj_discrete}}{=}  \sumphi |\Phi_q| \delta_s\sums \PixelWeight(\phi_q,x_{ij}\cdot \vartheta_q-s_p).
\end{equation}
Using \eqref{equ_lemma_pixel_sum_s} on \eqref{equ_numerics_backprojection_pixel_driven_definition}, we note that the internal sum 
\[\PixelFq(ij):= \delta_s\sums \PixelWeight(\phi_q,x_{ij}\cdot \vartheta_q-s_p)\] equals $1$ if $|x_{ij}|\leq 1-  \frac1 {\delta_s}$ (and consequently $|x_{ij}\cdot \vartheta_\phi|\leq 1 -\frac{1}{\delta_s}$ for all $\phi\in [0,\pi[$).
Moreover, $\sumphi |\Phi_q| = \pi$ implies that $[\PixelRadon^* g]_{ij} = \pi$, i.e., the pixel-driven backprojection attains the correct value $\Radon^*g=\pi$, for aforementioned $x_{ij}$.  Thus, with the exception of a small ring, all pixels in the unit ball will attain the correct value via the pixel-driven method. Note that we will later only consider and plot $x_{ij}$ with $|x_{ij}|\leq 0.95$ (thus avoiding the mentioned outermost pixels) as the behavior at the boundary is not central to our considerations here.

In contrast, in the ray-driven method, we observe that the innermost sum 
\[\RayFq (ij) = \delta_s \sums \RayWeight(\phi_q,x_{ij}\cdot \vartheta_q-s_p)\] of \eqref{equ_numerics_backprojection_ray_driven_definition} is not guaranteed to achieve the value $1$ (as was the case in the pixel-driven setting). As described in \eqref{equ_lemma_ray_sum_p}, when $\frac{\delta_s}{\delta_x} \to 0$, also $\RayFq(ij)$ will converge to $1$ for each $q$ and $ij$ uniformly when $|x_{ij}|\leq 1-\frac{\delta_x}{2}$ (the other cases will again be excluded from our numerical investigations below). In contrast, when $\delta_x\approx \delta_s$, then the values of $\RayFq(ij)$ -- being the Riemann sums described in the Proof of Lemma \ref{Lemma_sums_weights} -- cannot be expected to equal one.  In \eqref{equ_numerics_backprojection_ray_driven_definition} we still average with respect to $\phi_q$, so even if $\RayFq(ij)\neq 1$ for some $q$ (and fixed $ij$), it might well be that $\sumphi |\Phi_q| \RayFq(ij) \approx \pi$; one could say the ray-driven method does not do the right thing for individual angles but on average.  
And indeed, in numerical experiments, one can observe such behavior, although one needs to mention that the related convergence is very slow (see Figure \ref{Fig_numerics_backprojection_angle_decay}). While this effect occurs for this highly trivial $g=1$ example, this might not imply the same behavior for more complex sinograms $g$.

In conclusion, from our theoretical investigations we can already see that the pixel-driven method should be precise for this example, while for the ray-driven backprojection method, the error should chiefly depend on $\frac{\delta_s}{\delta_x}$ (as this mainly influences \eqref{equ_lemma_ray_sum_p}).

For numerical experiments, we start by considering increasing $N_x$ for a fixed number of angles $N_\phi=90$ and three different choices of $\frac{N_x}{N_s}=\frac{\delta_s}{\delta_x}\in \left \{1,\frac{1}{2},\frac{1}{4}\right\}$. As expected, for the pixel-driven method, the error is within the machine precision (single precision, roughly $10^{-6}$) and presumably is only non-zero due to rounding errors.
In contrast, when visualizing the error in the $\frac{\delta_s}{\delta_x}=1$ setting in Figure \ref{Fig_numerics_backprojection_evolution_balanced}, the error appears to be stagnant, i.e., even with greatly increased $N_x=N_s$, the error does not decrease. That is the case as $\RayFq(ij)$ chiefly depends on $\frac{\delta_s}{\delta_x}$ which remains fixed. In contrast, we also plotted the error with $\frac{\delta_s}{\delta_x}=\frac{1}{2}$ and $\frac{\delta_s}{\delta_x}=\frac{1}{4}$, showing that the errors are again stagnant but at significantly reduced levels; validating \eqref{equ_thm_ray_estimate_backprojeciton}.

\begin{figure}
\includegraphics[width = 0.9 \textwidth]{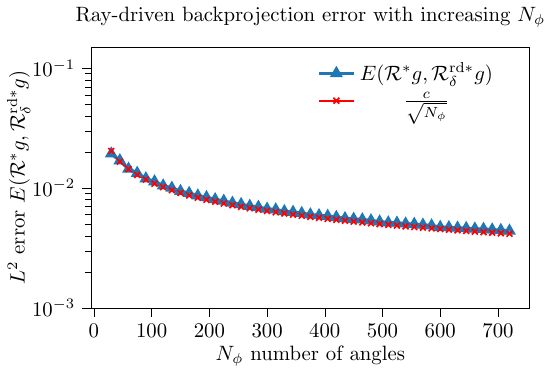}
\caption{Development of relative ray-driven backprojection errors for fixed balanced resolution $N_x=N_s=2000$ and increasing $N_\phi$. The behavior is roughly $\sqrt{\delta_\phi}$; see the red crosses for comparison.}
\label{Fig_numerics_backprojection_angle_decay}
\end{figure}

\begin{figure}
\vspace*{0.5cm}
\begin{overpic}[width=0.99\textwidth]{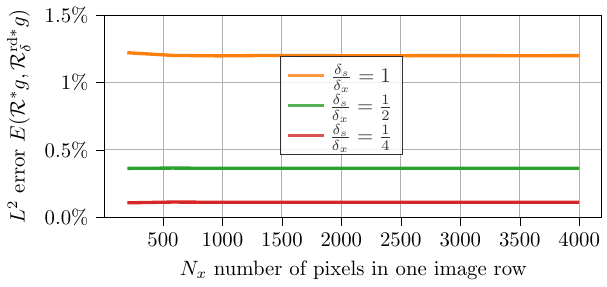}
\put(12,48){Evolution of ray-driven backprojection's errors for fixed ratios $\frac{\delta_s}{\delta_x}$}
\end{overpic}
\caption{Illustration of the error $E(\Radon^*g,\RayRadon^*g)$ for the fixed $N_\phi=90$ and increasing $N_x$ and three different ratios $\frac{\delta_s}{\delta_x}=\frac{N_x}{N_s}$.}
\label{Fig_numerics_backprojection_evolution_balanced}
\end{figure}

It might appear curious that even with significantly increased computational effort (with increased $N_x=N_s$), the errors remain the same, while reducing $N_x$ (and thus also the computational effort) reduces the error.
To illustrate the behavior of the calculated projections, in Figure \ref{Fig_numerics_backprojection_different_parameters} we   depict some of the related backprojections. As can be seen in the case $\frac{\delta_s}{\delta_x}=1$, the ray-driven backprojection introduces highly oscillatory patterns to the backprojection that persist as the $N_x$ increase with higher frequencies and the same amplitudes. In the second row of Figure \ref{Fig_numerics_backprojection_different_parameters} with $\frac{\delta_s}{\delta_x}=\frac{1}{2}$, these oscillations do not disappear, but their amplitude has decreased significantly (note the difference in scales). In particular, counterintuitively, calculating the backprojection with $N_s=1000$ and $N_x=1000$ requires $4$ times more computational effort than using $N_x=500$, while creating an error that is roughly three times larger (1.20\% compared to 0.36\%).


\begin{figure}
\newcommand{\mycolor}{black}
\newcommand{\mysize}{0.32}
\vspace*{0.25cm}
\begin{overpic}[width=\mysize \textwidth]{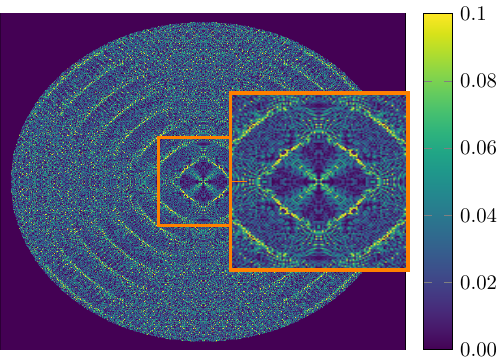}
 \put(0,73){\color{\mycolor} a)  \tiny $Nx=500$, $N_s=500$}
\end{overpic}
\begin{overpic}[width=\mysize \textwidth]{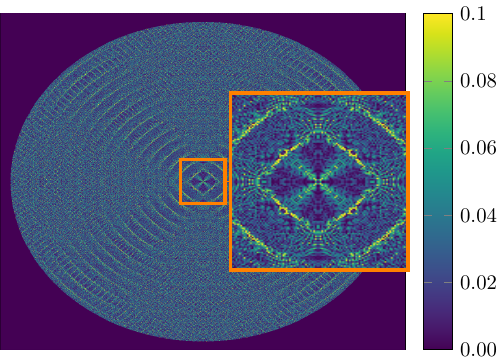}
 \put(0,73){\color{\mycolor} b)  \tiny $Nx=1000$, $N_s=1000$}
\end{overpic}
\begin{overpic}[width=\mysize \textwidth]{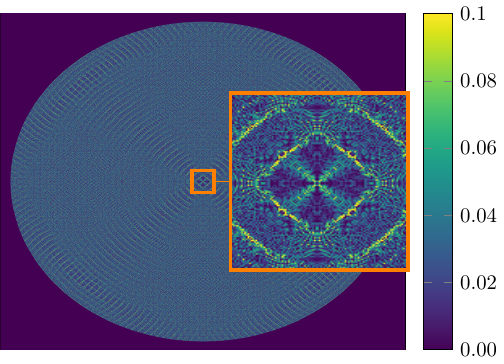}
 \put(0,73){\color{\mycolor} c)  \tiny $Nx=2000$, $N_s=2000$}
\end{overpic}\bigskip

\begin{overpic}[width=\mysize \textwidth]{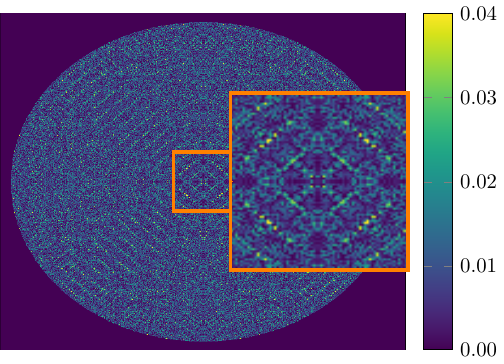}
 \put(0,73){\color{\mycolor} d)  \tiny $Nx=500$, $N_s=1000$}
\end{overpic}
\begin{overpic}[width=\mysize \textwidth]{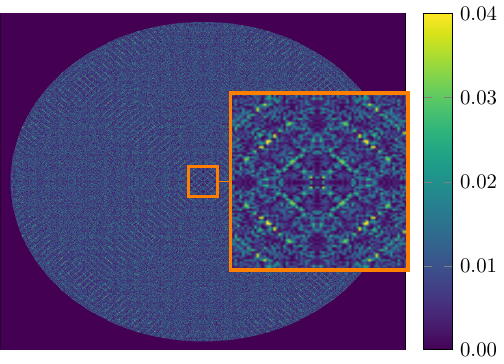}
 \put(0,73){\color{\mycolor} e)  \tiny $Nx=1000$, $N_s=2000$}
\end{overpic}
\begin{overpic}[width=\mysize \textwidth]{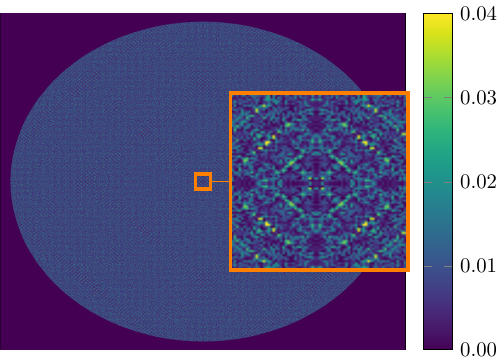}
 \put(0,73){\color{\mycolor} f)  \tiny $Nx=2000$, $N_s=4000$}
\end{overpic}
\caption{The first row shows the pointwise absolute difference $|\RayRadon^*g-\Radon^*g|$ for increasing balanced resolutions $\frac{\delta_s}{\delta_x}=1$. The second line shows the same differences for $\frac{\delta_s}{\delta_x}=\frac{1}{2}$. In both cases $N_\phi=90$ remains fixed. Note the different colormaps between the first and second row.}
\label{Fig_numerics_backprojection_different_parameters}
\end{figure}

As mentioned, the theory suggests that as $\frac{\delta_s}{\delta_x}$ goes to zero, the approximation of the backprojection gets better both in theory (see Theorem \ref{Thm_approximation_ray_driven}) but also concretely as we discussed above as $\RayFq(ij)\to 1$.
To further illustrate this point, in Figure \ref{Fig_numerics_backprojection_ratios}, we plot for fixed $N_x=1000$ (or fixed $N_x=2000$), the evolution of the errors for decreasing $\frac{\delta_s}{\delta_x}$ (in the plot, the $\bold x$ axis is the inverse for better visibility). The errors overall decrease as the said ratio decreases. A curious effect is the seemingly periodic structure of the error, where the ray-driven backprojection appears to perform best at integer multiples of $\frac{\delta_x}{\delta_s}\in \NN$. The author does not have a good explanation for this effect at this point beyond suspecting that some symmetry effects in $\RayFq$ make calculations more accurate by canceling out errors. This might be the topic of future investigations.
Note that considering the same experiment with $N_x=2000$ yields virtually the same results, further suggesting that indeed the ratio $\frac{\delta_s}{\delta_x}$ is the major factor in the errors.

\begin{figure}
\vspace*{0.25cm}
\begin{overpic}[width=0.99\textwidth]{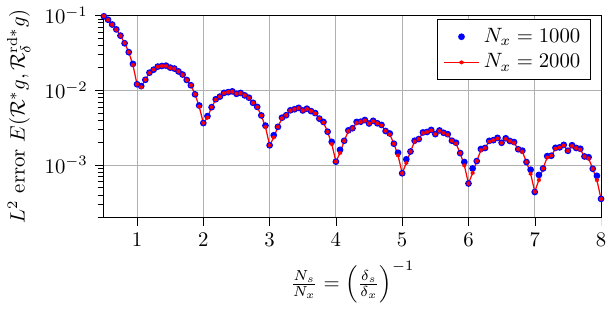}
\put(15,50){Evolution of errors for increasing $N_s$, fixed $N_x$ and $N_\phi=90$}
\end{overpic}
\caption{The evolution of relative $L^2$ errors $E(\Radon^*g,\RayRadon^*g)$ in the ray-driven backprojection with increasing $N_s$ while $N_\phi=90$ is fixed and $N_x=1000$ or $N_x=2000$.}
\label{Fig_numerics_backprojection_ratios}
\end{figure}

\section{Conclusion and Outlook}
This paper presented an interpretation of the ray-driven and pixel-driven discretization frameworks as finite rank operators via convolutional discretizations with ray- and pixel-driven weight functions. This interpretation allowed us to prove corresponding convergence statements in the strong operator topology (i.e., pointwise) in Theorem \ref{Thm_approximation_ray_driven}. This result gives a theoretical foundation to the widespread use of ray-driven forward and pixel-driven backprojection operators under balanced resolutions, confirming anecdotal reports concerning ray-driven Radon transforms and pixel-driven backprojections being suitable approximations. 

For the ray-driven backprojection and pixel-driven Radon transform, we did not present any convergence results in the balanced resolution setting. If they indeed do not converge, this makes the use of the matched approaches $\mathrm{rd}-\mathrm{rd}^*$ and $\mathrm{pd}-\mathrm{pd}^*$ questionable, as at least one of the used discretizations does not converge. 
In contrast, the combination of ray-driven and pixel-driven methods ($\mathrm{rd}$-$\mathrm{pd}^*$) guarantees that all the used discretizations converge in the balanced resolution setting. While using unmatched operators has its own set of issues, employing convergent unmatched discretizations ($\mathrm{rd}$-$\mathrm{pd}^*$) might be preferable to non-convergent matched discretizations ($\mathrm{rd}$-$\mathrm{rd}^*$) and ($\mathrm{pd}$-$\mathrm{pd}^*$).

 Related line integral operators like the fanbeam- and conbeam transforms (modeling similar tomography scenarios) also possess ray- and pixel-driven discretizations.
 The convergence results of this paper can probably be transferred in a straightforward manner to these other operators' discretizations, which might be the topic of future investigations.

The convergence result \eqref{equ_thm_ray_estimate_backprojeciton} shows that, while the ray-driven backprojection might be unsuitable for balanced resolutions, it indeed approximates the backprojection in the unbalanced resolution setting $\frac{\delta_s}{\delta_x}\to 0$. Since both the forward and backprojection converging in the strong operator topology is a prerequisite for convergence in the operator norm, it might be that in this unbalanced resolution setting, the ray-driven Radon transform converges in the operator norm.
Future work might investigate such convergence properties. 

Many of the theoretical properties described in Sections \ref{section_Radon} and \ref{section_Radon_proofs} are also observed in our numerical experiments in Section \ref{section_numerics}. That the pixel-driven Radon transform outperformed the ray-driven Radon transform for many projection angles (see Figure \ref{Fig_numerics_error_per_projection}) was a bit surprising, and might warrant further investigation.

While it seems intuitive that better discretizations should result in reconstructions being more faithful to the infinite-dimensional tomography problem (using the continuous Radon transform), there appears to be a gap in the literature not rigorously arguing that point. Bridging that gap will be a key step in connecting `discrete' inverse problems solved on computers with `infinite-dimensional' inverse problems that are commonly investigated on a theoretical level.

\appendix

\section*{Appendix: Proof of Lemma \ref{Lemma_weight_as_intersection_length}}
We fix some $\phi \in [0,\pi[$ and $s\in \RR$, and set $\underline s = \underline s(\phi)$ and $\overline s = \overline s(\phi)$ for the sake of readability. 
Recall that $|s|$ describes the normal distance of $L_{\phi,s}$ to the origin $(0,0)\in \RR^2$, and a point $x$ satisfies $x\in L_{\theta,s}$  if and only if $x\cdot \vartheta_\phi=s$. In particular, all points $x$ with $x\cdot \vartheta_\phi>s$ are on one side of $L_{\phi,s}$, and all $x$ with $x\cdot \vartheta_\phi<s$ on the other.

We consider the square $Z :=\left[-\frac{\delta_x}{2},\frac{\delta_x}{2} \right ]^2$. Our first goal is to show
\begin{equation} \label{equ_proof_intersectionlengths_via_ray_weight}
\delta_x^2\RayWeight(\phi,s)=\mathcal{H}^1(L_{\phi,s}\cap Z)-\frac{1}{2}\mathcal{H}^1(L_{\phi,s}\cap \partial Z)=:F(\phi,s),
\end{equation}
from which \eqref{equ_lemma_intersectionlength} will easily follow.

We divide the calculation of \eqref{equ_proof_intersectionlengths_via_ray_weight} into multiple cases. Since the following considerations are quite geometric, see Figure \ref{Fig_intersection_lengths} for their visual representation.
Via straightforward calculation, the four corners $(\pm \frac{\delta_x}{2},\pm \frac{\delta_x}{2})\in \RR^2$ of $Z$ lie exactly on the lines associated with $-\overline s,$ $-\underline s,$ $\underline s$ and $\overline s$.
Therefore, if $|s|< \underline s$ (case 1), i.e., $-\overline s\leq - \underline s <s<\underline s\leq \overline s$, there are two vertices on either side of $L_{\phi,s}$. If $|s|\in [\underline s,\overline s[$ (case 2), then one side of $L_{\phi,s}$ has only a single corner. If $|s|>\overline s$ (case 3), then one side of $L_{\phi,s}$ does not contain any corners. This leaves the special cases $|s|=\underline s$ and $\phi \not \in \frac{\pi}{2}\ZZ$ (case 4), $|s|=\overline s$  and $\phi \not \in \frac{\pi}{2}\ZZ$ (case 5), and finally  $|s|=\overline s$ and $\phi \in \frac{\pi}{2}\ZZ$  implying $\underline s = \overline s$ (case 6).

Moreover, note that $L_{\phi,s}\cap \partial Z$ contains exactly two points when $|s|<\overline s$ and thus is an $\mathcal{H}^1$ null set. When $|s|=\overline s$, there is exactly one element in $L_{\phi,s}\cap \partial Z$ if $\phi \not \in \frac{\pi}{2}\ZZ$ (i.e., $L_{\phi,s}$ is not parallel to one of the sides of $Z$), and it is one entire side of $Z$ otherwise. Hence, $\mathcal{H}^1(L_{\phi,s}\cap \partial Z)$ is only non-zero in the case 6, and can otherwise be ignored.

\underline{Case 1:}
If $|s|<\underline s$, we consider the right triangle formed by the two points $z_1,z_2\in L_{\phi,s}\cap \partial Z$ (lying on opposite sides of $Z$) and one point exactly opposite $z_1$; see Figure \ref{Fig_intersection_lengths} b). Hence, one side of the triangle is $L_{\phi,s}\cap Z$ with length $r$, and one other side's length is $\delta_x$.
Since one of the triangle's sides is $\delta_x$ long and it contains the angle $\phi$, the hypotenuse's length equals $F(\phi,s)=r=\delta_x \min\left(\frac{1}{|\cos(\phi)|},\frac{1}{|\sin(\phi)|}\right)\overset{\text{per}}{\underset{\text{def}}{=}}\delta_x^2\RayWeight(\phi,s)$.

\underline{Case 2:}
When $|s|\in [\underline s,\overline s[$, the two points of $L_{\phi,s}\cap \partial Z$ form a right triangle with the single corners of $Z$ on one side of $L_{\phi,s}$. In particular, $L_{\phi,s}\cap Z$ forms the hypotenuse of said triangle with length $r$, while we denote the catheti's lengths by $a$ and $b$, and the height as $h$; see Figure \ref{Fig_intersection_lengths} c). We note that the height satisfies $h=\overline s -|s|$.
The area in a right triangle satisfies $\text{Area} = \frac{ab}{2} = \frac{rh}{2}$, which together with  $a=r|\sin(\phi)|$ and $b=r|\cos(\phi)|$  implies  
\begin{equation}
 F(\phi,s)= r = \frac{h}{|\cos(\phi)\sin(\phi)|}= \frac{\overline s -|s|}{|\cos(\phi)\sin(\phi)|}\overset{\text{per}}{\underset{\text{def}}{=}}\delta_x^2 \RayWeight(\phi,s).
\end{equation}

\underline{Case 3:}
Since $Z$ is the convex hull of its corners, $Z$ in its entirety lies on one side of $L_{\phi,s}$ if $|s|>\overline s$, and thus $L_{\phi,s}\cap Z=\emptyset$, implying $F(\phi,s)=0\overset{\text{per}}{\underset{\text{def}}{=}}\delta_x^2 \RayWeight(\phi,s)$.

\underline{Case 4:}
Since $\phi \not \in \frac{\pi}{2}\ZZ$ (implying $\underline{s}<\overline{s}$), $|s|=\underline s$ is precisely the same situation as described in case 2.

\underline{Case 5:}
In the case $\phi \not \in \frac{\pi}{2}\ZZ$ and $|s|=\overline s$, the intersection $L_{\phi,s}\cap Z$ contains only a single point (the corner), thus having Hausdorff measure zero, which coincides with $\delta_x^2\RayWeight(\phi,s)$ (this falls into the `else' case of \eqref{equ_def_ray_weight}).

\underline{Case 6:}
If $\phi \in \frac{\pi}{2}\ZZ$ and $|s|=\overline s$,  we have to take $\partial Z$ into account (this is the only case where the Hausdorff measure of $L_{\phi,s}\cap \partial Z$ is not zero). 
In particular, $L_{\phi,s}\cap Z=L_{\phi,s}\cap \partial Z$ is one side of $Z$, and thus $\mathcal{H}^1(L_{\phi,s}\cap Z)=\mathcal{H}^1(L_{\phi,s}\cap \partial Z)=\delta_x$. Therefore, $F(\phi,s)=\delta_x-\frac{\delta_x}{2}= \frac{\delta_x}{2}\overset{\text{per}}{\underset{\text{def}}{=}} \delta_x^2 \RayWeight(\phi,s)$.

In conclusion, \eqref{equ_proof_intersectionlengths_via_ray_weight} holds.
Since the Hausdorff measure is translation invariant, $X_{ij}=x_{ij}+Z$ and $L_{\phi,s}= x_{ij}+L_{\phi,s-x_{ij}\cdot\vartheta_\phi}$, we finally get
\begin{equation}
    \mathcal{H}^1(L_{\phi,s}\cap X_{ij})= \mathcal{H}^1\big((x_{ij}+L_{\phi,s-x_{ij}\cdot \vartheta_\phi})\cap (x_{ij}+Z )\big)
    =\mathcal{H}^1\big(L_{\phi,s-x_{ij}\cdot \vartheta_\phi}\cap Z \big)
    .
\end{equation}
and analogously for $\mathcal{H}^1(L_{\phi,s}\cap \partial X_{ij})$, resulting in \eqref{equ_lemma_intersectionlength}.

\begin{figure}
\newcommand{\myheight}{3}
\newcommand{\mydepth}{0.3}

\center
\newcommand{\figursize}{1.1}
\begin{tikzpicture}[scale=\figursize]

\draw[] (-\myheight,\myheight) node [right,yshift=-0.2cm]{a)};

\clip (-\myheight-0.8,-\mydepth) rectangle (0+\mydepth,\myheight+0.8) ;
\draw(-\myheight,0) -- (0,0);
\draw(0,0) -- (0,\myheight);
\draw(-\myheight,0) -- (-\myheight,\myheight);
\draw(0,\myheight) -- (-\myheight,\myheight);

\pgfmathsetmacro{\myangle}{25}
\pgfmathsetmacro{\t}{2}

\pgfmathsetmacro{\vx}{cos(\myangle)}
\pgfmathsetmacro{\vy}{sin(\myangle)}

\pgfmathsetmacro{\wx}{sin(\myangle)}
\pgfmathsetmacro{\wy}{-cos(\myangle)}


\pgfmathsetmacro{\supper}{0*\wx+\myheight*\wy}
\pgfmathsetmacro{\slower}{-\myheight*\wx+0*\wy}
\pgfmathsetmacro{\N}{5}
\pgfmathsetmacro{\n}{0}
{
\pgfmathsetmacro{\myxi}{\slower+(\supper-\slower)*\n/\N}
\pgfmathsetmacro{\tone}{-\myxi*\wy/\vy}
\pgfmathsetmacro{\ttwo}{-\myxi*\wx/\vx}
\draw[teal,ultra thick] (\tone*\vx+\myxi*\wx,\tone*\vy+\myxi*\wy) -- (\ttwo*\vx+\myxi*\wx,\ttwo*\vy+\myxi*\wy) node [midway,above,xshift=-0.1cm] {$L_{\phi,\underline s}$};
}


\pgfmathsetmacro{\s}{-\myheight*\wx+0*\wy}
\pgfmathsetmacro{\N}{7}

\pgfmathsetmacro{\s}{0*\wx+\myheight*\wy}

\pgfmathsetmacro{\n}{0}
{
\pgfmathsetmacro{\myxi}{\s-2*\n/\N}
\pgfmathsetmacro{\tone}{-\myxi*\wy/\vy}
\pgfmathsetmacro{\ttwo}{-\myxi*\wx/\vx}
\draw[teal,ultra thick] (\tone*\vx+\myxi*\wx,\tone*\vy+\myxi*\wy) -- (\ttwo*\vx+\myxi*\wx,\ttwo*\vy+\myxi*\wy) node [midway,above,xshift=1.5cm] {$L_{\phi,-\underline s}$};
}

\pgfmathsetmacro{\s}{-0.9}
\pgfmathsetmacro{\n}{0}
\pgfmathsetmacro{\myxi}{\s-2*\n/\N}
\pgfmathsetmacro{\tone}{-\myxi*\wy/\vy}
\pgfmathsetmacro{\ttwo}{-\myxi*\wx/\vx}
\draw[red,ultra thick] (\tone*\vx+\myxi*\wx,\tone*\vy+\myxi*\wy) -- (\ttwo*\vx+\myxi*\wx,\ttwo*\vy+\myxi*\wy) node [midway,below,xshift=0.2cm] {$L_{\phi,s_2}$};

\pgfmathsetmacro{\s}{-2}
\pgfmathsetmacro{\n}{0}
\pgfmathsetmacro{\myxi}{\s-2*\n/\N}
\pgfmathsetmacro{\tone}{-\myxi*\wy/\vy}
\pgfmathsetmacro{\ttwo}{-\myxi*\wx/\vx}
\draw[brown,ultra thick] (\tone*\vx+\myxi*\wx,\tone*\vy+\myxi*\wy) -- (\ttwo*\vx+\myxi*\wx,\ttwo*\vy+\myxi*\wy) node [midway,below,xshift=0.2cm] {$L_{\phi,s_1}$};

\pgfmathsetmacro{\s}{-4}
\pgfmathsetmacro{\n}{0}
\pgfmathsetmacro{\myxi}{\s-2*\n/\N}
\pgfmathsetmacro{\tone}{-\myxi*\wy/\vy+6}
\pgfmathsetmacro{\ttwo}{-\myxi*\wx/\vx-2}
\draw[blue,ultra thick] (\tone*\vx+\myxi*\wx,\tone*\vy+\myxi*\wy) -- (\ttwo*\vx+\myxi*\wx,\ttwo*\vy+\myxi*\wy) node [below,xshift=0.2cm] {$L_{\phi,-\overline s}$};

\pgfmathsetmacro{\s}{-4.3}
\pgfmathsetmacro{\n}{0}
\pgfmathsetmacro{\myxi}{\s-2*\n/\N}
\pgfmathsetmacro{\tone}{-\myxi*\wy/\vy+6}
\pgfmathsetmacro{\ttwo}{-\myxi*\wx/\vx-3}
\draw[violet,ultra thick] (\tone*\vx+\myxi*\wx,\tone*\vy+\myxi*\wy) -- (\ttwo*\vx+\myxi*\wx,\ttwo*\vy+\myxi*\wy) node [left,xshift=-0.2cm] {$L_{\phi, s_3}$};

\draw [dashed, magenta, ultra thick]  (0,0) -- (0,3.5) node[left ] {$L_{0,\overline s}$};

\end{tikzpicture}
\begin{tikzpicture}[scale=\figursize]
\draw(-\myheight,0) -- (0,0);
\draw(0,0) -- (0,\myheight);
\draw(-\myheight,0) -- (-\myheight,\myheight);
\draw(0,\myheight) -- (-\myheight,\myheight);

\draw[] (-\myheight,\myheight) node [right,yshift=-0.2cm]{b) case 1};

\clip (-\myheight,-\mydepth) rectangle (0+\mydepth,\myheight) ;

\pgfmathsetmacro{\myangle}{25}
\pgfmathsetmacro{\s}{-2}

\pgfmathsetmacro{\vx}{cos(\myangle)}
\pgfmathsetmacro{\vy}{sin(\myangle)}

\pgfmathsetmacro{\wx}{sin(\myangle)}
\pgfmathsetmacro{\wy}{-cos(\myangle)}

\pgfmathsetmacro{\tone}{(-3-\s*\wx)/\vx}
\pgfmathsetmacro{\ttwo}{-\s*\wx/\vx}

\begin{scope}
\clip (0,0) rectangle (-\myheight,\myheight);

\draw[brown, ultra thick] (\tone*\vx+\s*\wx,\tone*\vy+\s*\wy) -- (\ttwo*\vx+\s*\wx,\ttwo*\vy+\s*\wy) node[midway,above] {$r$};

\pgfmathsetmacro{\radius}{1}
\draw [blue] (\tone*\vx+\s*\wx,\tone*\vy+\s*\wy) node[xshift=0.8cm,yshift=.2cm] {$\phi^c$};
\draw [blue] (\tone*\vx+\s*\wx+\radius,\tone*\vy+\s*\wy) arc (0:\myangle:\radius);

\pgfmathsetmacro{\radius}{0.5}
\draw [blue] (\ttwo*\vx+\s*\wx,\ttwo*\vy+\s*\wy) node[xshift=-0.2cm,yshift=-.3cm] {$\phi$};
\draw [blue] (\ttwo*\vx+\s*\wx-\radius*\vx,\ttwo*\vy+\s*\wy-\radius*\vy) arc (-180:\myangle:\radius);

\draw [blue] (0,0.8+0.5) arc (90:180:0.5);
\draw [blue,fill] (-0.2,1) circle (0.03cm);

\end{scope}
\draw[olive, ultra thick] (\tone*\vx+\s*\wx,\tone*\vy+\s*\wy) -- (0,\tone*\vy+\s*\wy) node[midway,below] {$\delta_x$};
\draw[cyan,ultra thick] (\ttwo*\vx+\s*\wx,\ttwo*\vy+\s*\wy) -- (0,\tone*\vy+\s*\wy) node[midway,right] {};


\end{tikzpicture}
\begin{tikzpicture}[scale=\figursize]
\draw(-\myheight,0) -- (0,0);
\draw(0,0) -- (0,\myheight);
\draw(-\myheight,0) -- (-\myheight,\myheight);
\draw(0,\myheight) -- (-\myheight,\myheight);

\draw[] (-\myheight,\myheight) node [right,yshift=-0.2cm]{c) case 2};

\clip (-\myheight,-\mydepth) rectangle (0+\mydepth,\myheight) ;

\pgfmathsetmacro{\myangle}{25}

\pgfmathsetmacro{\scenter}{-(cos(\myangle)+sin(\myangle))*\myheight/2}
\pgfmathsetmacro{\t}{2}

\pgfmathsetmacro{\vx}{cos(\myangle)}
\pgfmathsetmacro{\vy}{sin(\myangle)}

\pgfmathsetmacro{\wx}{sin(\myangle)}
\pgfmathsetmacro{\wy}{-cos(\myangle)}

\pgfmathsetmacro{\tone}{-\scenter*\wy/\vy}
\pgfmathsetmacro{\ttwo}{-\scenter*\wx/\vx}

\pgfmathsetmacro{\tthree}{(-\myheight-\scenter*\wx)/\vx}

\draw[black, thick, dashed] (\tone*\vx+\scenter*\wx,\tone*\vy+\scenter*\wy) -- (\ttwo*\vx+\scenter*\wx,\ttwo*\vy+\scenter*\wy) node[midway,above,xshift=1cm,yshift=0.5cm] {$(0,0)$};

\pgfmathsetmacro{\radius}{1}
\draw [dashed](\tthree*\vx+\scenter*\wx,\tthree*\vy+\scenter*\wy) -- (\tthree*\vx+\scenter*\wx+\radius,\tthree*\vy+\scenter*\wy);

\draw [dashed](\tthree*\vx+\scenter*\wx,\tthree*\vy+\scenter*\wy) -- (\tthree*\vx+\scenter*\wx+\radius*\wx,\tthree*\vy+\scenter*\wy+\radius*\wy);

\draw [blue] (\tthree*\vx+\scenter*\wx,\tthree*\vy+\scenter*\wy) node[xshift=0.8cm,yshift=.2cm] {$\phi^c$};
\draw [blue] (\tthree*\vx+\scenter*\wx+\radius,\tthree*\vy+\scenter*\wy) arc (0:\myangle:\radius);

\draw [teal] (\tthree*\vx+\scenter*\wx+\radius*\wx,\tthree*\vy+\scenter*\wy+\radius*\wy) node[,yshift=.6cm] {$\phi$};
\draw [teal] (\tthree*\vx+\scenter*\wx+\radius*\wx,\tthree*\vy+\scenter*\wy+\radius*\wy) arc (270+\myangle:360:\radius);

\draw [fill] (-\myheight/2,\myheight/2) circle (0.05cm);

\pgfmathsetmacro{\s}{-0.9}
\pgfmathsetmacro{\t}{2}

\pgfmathsetmacro{\vx}{cos(\myangle)}
\pgfmathsetmacro{\vy}{sin(\myangle)}

\pgfmathsetmacro{\wx}{sin(\myangle)}
\pgfmathsetmacro{\wy}{-cos(\myangle)}

\pgfmathsetmacro{\tone}{-\s*\wy/\vy}
\pgfmathsetmacro{\ttwo}{-\s*\wx/\vx}

\draw[red, ultra thick] (\tone*\vx+\s*\wx,\tone*\vy+\s*\wy) -- (\ttwo*\vx+\s*\wx,\ttwo*\vy+\s*\wy) node[midway,above] {$r$};

\draw[olive,ultra thick] (\tone*\vx+\s*\wx,\tone*\vy+\s*\wy) -- (0,0) node[midway,below] {$a$};
\draw[cyan,ultra thick] (\ttwo*\vx+\s*\wx,\ttwo*\vy+\s*\wy) -- (0,0) node[midway,right] {$b$};

\pgfmathsetmacro{\radius}{1}


\draw [blue] (\tone*\vx+\s*\wx,0) node[xshift=0.8cm,yshift=.2cm] {$\phi^c$};
\draw [blue] (\tone*\vx+\s*\wx+\radius,0) arc (0:\myangle:\radius);


\draw[orange,ultra thick] (0,0) -- (\s*\wx,\s*\wy) node[midway,left]{$h$};
\draw[thick, magenta] (\scenter*\wx,\scenter*\wy) -- (\s*\wx,\s*\wy) node[midway,left]{$|s|$};

\draw [decorate,decoration={brace,amplitude=5pt,mirror},teal, very thick]
(0,0) -- (\scenter*\wx,\scenter*\wy)  node[midway,above,xshift=.2cm]{$\overline s$};

\end{tikzpicture}
\caption{Illustration supporting the proof of Lemma \ref{Lemma_weight_as_intersection_length}. In a), we depict the different cases passing through a square $Z$ for fixed $\phi$ (here $105^\circ$), where the teal lines describe the case 4 $|s|=\underline s(\phi)$ in which the corners are precisely hit, the brown line $|s_1|< \underline s(\phi)$ (case 1) and the red line $|s_2| \in [\underline s(\phi),\overline s(\phi)[$ (case 2). 
Moreover, the blue line is representative of case 5 (hitting exactly one corner), while the violet line representing case 3 does not hit $Z$, and the dashed magenta line is representative of case 6 with the line intersecting with one side of $Z$.
Figures b) and c)  detail  the geometry of case 1 and case 2, depicting relevant right triangles.}

\label{Fig_intersection_lengths}

\end{figure}

%
%
%
 \bibliographystyle{splncs04}
\bibliography{splncs04}

\end{document}